\documentclass[reqno,centertags,12pt]{amsart}
\usepackage{amsmath,amsthm,amscd,amssymb,latexsym,verbatim}
\usepackage{amssymb}

\usepackage{mathtools}

\usepackage{graphicx,epsf,cite}

\usepackage[shortlabels]{enumitem}

\usepackage{color}

-\marginparwidth \oddsidemargin -4mm \evensidemargin \oddsidemargin

\usepackage{setspace}
\newtheorem{theorem}{Theorem}[section]

\newtheorem{proposition}[theorem]{Proposition}
\newtheorem{lemma}[theorem]{Lemma}
\newtheorem{corollary}[theorem]{Corollary}
\theoremstyle{definition}
\newtheorem{definition}[theorem]{Definition}

\newcounter{smalllist}

\DeclareMathOperator*{\dist}{dist} 

\DeclareMathOperator*{\Lip}{Lip}

\allowdisplaybreaks
\numberwithin{equation}{section}

\newcommand{\lb}{\label}

\newcommand{\wu}{\widetilde{\mu}  }

\newcommand{\beq}{\begin{equation}}
\newcommand{\eeq}{\end{equation}}

\newcommand{\bal}{\begin{align}}
\newcommand{\eal}{\end{align}}
\newcommand{\bals}{\begin{align*}}
\newcommand{\eals}{\end{align*}}

\newcommand{\bbR}{{\mathbb{R}}}

\newcommand{\bbT}{{\mathbb{T}}}
\newcommand{\bbS}{{\mathbb{S}}}

\newcommand{\calQ}{{\mathcal Q}}

\newcommand{\calG}{{\mathcal G}}

\newcommand{\eps}{\varepsilon}

\usepackage[T1]{fontenc}
\usepackage{esint}

\begin{document}
\title[Regularity of Hele-Shaw Flow]
{$C^{1}$-Regularity of the Free Boundary for Hele-Shaw Flow with Source and Drift}

\author{Yuming Paul Zhang}
\address{\noindent Department of Mathematics and Statistics \\ Auburn University \\ Parker Hall, 221 Roosevelt Concourse, Auburn, AL 36849 \newline Email: \tt
yzhangpaul@auburn.edu}



\begin{abstract} 
This paper is a continuation of the work in \cite{kimzhang2024} concerning Hele-Shaw flow with both drift and source terms. We prove that, in a local neighborhood, if the free boundary is Lipschitz continuous with a sufficiently small Lipschitz constant, then the free boundary is $C^{1}$. As a corollary, we also consider the 2D vertical Hele-Shaw (or one-phase Muskat) problem with an advection term. We show that, provided the initial data and the advection term are small and the propagation speed is  large, the free boundary becomes uniformly $C^1$ after a finite time.
\end{abstract}

\maketitle

\medskip

\noindent {\bf Keywords:} Hele-Shaw flow, Muskat problem, Source and drift terms, Free boundary regularity, $C^1$-regularity

\medskip

\noindent{\bf AMS Subject Classification (2020):}  	35R35, 35B65, 76D27

\section{Introduction} 

This paper provides a continuation of the study of Hele-Shaw flow with source and drift terms in \cite{kimzhang2024}. 
We consider the problem where $u$ solves the following system within the positive set $\{(x,t) \in B_{2}\times (-2,2)\, |\, u(x,t) > 0\}$:
\begin{equation}\lb{1.1}
    \left\{\begin{aligned}
        -\Delta u &=f \quad &&\text{ in }\{u(x,t)>0\},\\
        u_t&=|\nabla u|^2+{b}\cdot\nabla u\quad &&\text{ on } \partial\{u(x,t)>0\},
    \end{aligned}\right.
\end{equation}
Here, $b$ represents a Lipschitz vector field  and $f$ is a non-negative H\"older continuous function.  The set $\partial\{u(x,t)>0\}$ is referred to as the free boundary.
The presence of source and drift terms is not just a mathematical extension but is also deeply rooted in practical applications, including tumor growth \cite{PQV,David_S,jacobs2022tumor} and population dynamics \cite{maury2010,CKY}. 

\smallskip

It was proved in \cite{kimzhang2024} that if the free boundary of the solution is locally close to a Lipschitz graph with small Lipschitz constant, then the free boundary is indeed Lipschitz. Moreover, we have the non-degeneracy property. Our main result demonstrates that if the free boundary is Lipschitz continuous with a sufficiently small Lipschitz constant in a local neighborhood, it must necessarily be $C^1$. 

Compared to the classical model, this formulation introduces lower-order terms in both the governing equation and the free boundary condition. It appears that after zooming in at a single point, $f\to 0$ and ${b}$ becomes a constant vector fields, so the classical theory applies as well. However, realizing this involves serious obstacles. Indeed, the primary challenge in this analysis stems from the irregular nature of the free boundary. Because the boundary is not prescribed, its shape can be rough, causing the normal directions and the solution itself to be highly sensitive to small perturbations from the extra drift terms. 

\smallskip

Now, we state our main result. In it, we denote 
\[
\calQ_{r,c}:=B_r\times (-c,c)\quad\text{and}\quad \calQ_{r}:=\calQ_{r,r}.
\]

\begin{theorem}{\rm[$C^1$ free boundary]}\lb{T.8.4}
Suppose that $u$ is non-decreasing along all directions in $W_{\theta,-e_d}$ with $\theta\geq \theta_{*,d}$ for some dimensional constant $\theta_{*,d}$ in $\calQ_2$ and there exist $C>1$ and $c>0$ such that
\begin{equation}\lb{nondeg}
     u(x-\eps e_d,t)\geq \eps/C \quad \hbox{ for all } (x,t)\in \Gamma_u\cap \calQ_2\text{ and all }\eps>0,
\end{equation}
\beq\lb{cond*}
| \partial_t u(-e_d,t)|\leq C,\quad  u(-e_d,t)\in [C^{-1},C] \quad \forall t\in (-2c,2c).
\eeq
Then the free boundary $\Gamma_u\cap \calQ_{1,c}$  is $C^1$.

Moreover, if $\Gamma_u(t)$ for some $t\in (-c,c)$ is given by a graph $F(x')=x_d$ with $(x',x_d)\in\bbR^d$ locally near one free boundary point $(0,x_{0,d})\in \Gamma_u(t)$, then $F$ is differentiable and there exist $K,C'\geq 2$ such that for $x',y'$ sufficiently close to $0$,
\[
|\nabla F(x')-\nabla F(y')|\leq C' (-\log_K |x'-y'|)^{-1}.
\]
\end{theorem}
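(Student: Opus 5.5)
The plan follows the Caffarelli-type ``improvement of cone of monotonicity'' approach, in the form adapted to Hele-Shaw flow by Choi--Jerison--Kim. The main change is that the source $f$ and drift $b$ must be treated as perturbations that shrink under parabolic-type rescaling.

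\emph{Setup.} By hypothesis $u$ is monotone in the cone $W_{\theta,-e_d}$. We have the non-degeneracy \eqref{nondeg} and the normalization \eqref{cond*}. Using the results of \cite{kimzhang2024}, we may therefore assume:
\begin{itemize}
\item $u$ is Lipschitz in space;
\item $u$ is comparable to the distance to $\Gamma_u$;
\item the free boundary speed $u_t/|\nabla u|$ is bounded above and below, uniformly in $\calQ_{1,c}$.
\end{itemize}
Both the monotonicity and these bounds are preserved under the rescaling $u_r(x,t)=u(x_0+rx,t_0+rt)/r$ around a free boundary point $(x_0,t_0)$. The rescaled function solves \eqref{1.1} with source $rf(x_0+r\cdot)$ and drift $b(x_0+r\cdot)$. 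So, up to an error $O(r)$ in $f$ and an error $O(r\|b\|_{\Lip})$ in $b$, it solves the problem with $f\equiv0$ and constant drift $b_0=b(x_0)$. A Galilean change of variables $x\mapsto x+b_0t$ removes the constant drift at leading order.

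\emph{Key step: one-scale improvement.} The central lemma reads as follows. There are $\lambda\in(0,1)$ and $\delta>0$, depending only on $d$ and $C$, with this property. Suppose $u$ is monotone in $W_{\theta,\mu}$ in $\calQ_{1}$, and the perturbation size $\|f\|+\|\nabla b\|\le\eps$ in $\calQ_1$ is small compared to the current defect $\pi/2-\theta$. Then in $\calQ_{\lambda}$, $u$ is monotone in a larger cone $W_{\theta',\mu'}$, with $\pi/2-\theta'\le(1-\delta)(\pi/2-\theta)$.

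To prove it, we first convert cone monotonicity into the usual ``$\eps$-monotonicity'' statement: for unit $\tau$ in the cone,
\[
\sup_{B_{\sigma\sin\theta}(x)} u(\cdot-\tau,t)\le u(x,t).
\]
Away from the free boundary, $u-f$-corrected is harmonic and $\partial_\tau u\ge0$. A Harnack inequality applied to $\partial_\tau u$ for $\tau$ in the interior of the cone, at the point $-\lambda e_d$, gives a quantitative gain $\partial_\tau u\ge c\,\sigma|\nabla u|$ in an interior region. We then propagate this gain to the free boundary. For this we build a family of sup-convolution perturbations
\[
v_s(x,t)=\sup_{B_{\varphi_s(x)}}u(\cdot-\tau,t),
\]
with $\varphi_s$ a suitably designed variable radius, as in the Caffarelli / Choi--Jerison--Kim construction. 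The $O(\eps)$ errors from $f$ and $b$ are absorbed into the choice of $\varphi_s$: we add a correction of size $C\eps$ to $\Delta\varphi$ and $\partial_t\varphi$. This requires the Lipschitz bound on $b$ and the speed bounds. A comparison principle for viscosity solutions with source and drift, from \cite{kimzhang2024}, then yields $v_1\le u$, which is the enlarged monotonicity at the boundary.

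\emph{Iteration and modulus.} We apply the lemma at dyadic-type scales $r_k=\lambda^k$. At scale $r_k$ the perturbation is $O(r_k)$, while the defect is $\gamma_k=\pi/2-\theta_k$. The smallness requirement $\eps\lesssim\gamma$ can fail once $\gamma_k\approx r_k$. To handle this, we do not improve at every step: we allow blocks of $m_k$ steps per improvement, with $m_k$ growing, and track the recursion $\gamma_{k+1}\le(1-\delta)\gamma_k+Cr_k/\gamma_k$. The interplay of $\delta$, the Harnack constant and the required scale separation limits the rate to $\gamma_k\lesssim 1/k$. Equivalently, the normal direction $\mu_k$ converges with
\[
|\mu_k-\mu_\infty|\lesssim(-\log_K r)^{-1},
\]
with $K=\lambda^{-1}$. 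Convergence of the cones at every free boundary point gives differentiability of $F$. Comparing two points $x',y'$ at scale $r=|x'-y'|$ gives the stated logarithmic modulus for $\nabla F$.

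\emph{Main obstacle.} I expect the hardest part to be constructing the sup-convolution family so that the drift error is controlled uniformly. The free boundary condition involves $b\cdot\nabla u$, and $\nabla u$ along a merely Lipschitz boundary is only bounded. The variation of $b$ across the convolution balls therefore produces an error proportional to $\sigma\,\varphi\,\|\nabla b\|$, and this must be dominated by the gain $c\sigma$. My first attempt would be to take $\varphi$ growing in time, with $\partial_t\varphi$ chosen to beat this term using the lower speed bound from \cite{kimzhang2024}. If that fails, I would accept the loss and run the slower iteration described above; that slower iteration is precisely what produces only a logarithmic rather than Hölder modulus.
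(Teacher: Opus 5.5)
\textbf{The gap is in your Setup, and everything downstream depends on it.} You assume that $u$ is spatially Lipschitz (i.e.\ $u\lesssim d(\cdot,\Gamma_u)$) and that the free boundary speed is bounded above. Neither follows from the hypotheses or from \cite{kimzhang2024}. You are only given the lower bound \eqref{nondeg}, spatial cone monotonicity, and control of $u$ at the single point $-e_d$. In a Lipschitz domain $u/d$ can blow up; the proof of Lemma \ref{Lipdomain} only gives $u/d\lesssim d^{1-\beta}$ with $\beta\in(1,2)$. Also, \cite{kimzhang2024} supplies only a lower bound on $\partial_t u$, and the paper stresses that it works without a bound on $\nabla u$.

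As a result, your rescalings $u_r$ are not uniformly normalized. More seriously, your one-scale lemma has no mechanism controlling how $u$ changes in time. Your Harnack step gives a gain at each fixed time. But the direction $\nabla u(\tfrac34\mu,t)$, which decides which directions gain, may move with $t$. The sup-convolution propagation (Lemma \ref{L.7.6}) needs the strict interior inequality \eqref{72.2} on a whole time interval, for a fixed set of directions. The paper gets this from three ingredients missing from your plan:
\begin{itemize}
\item the bound $\partial_{t+}u\le C(1+|\nabla u|^2)$ of Lemma \ref{L.7.3}, proved via harmonic replacement, Proposition \ref{L.2.61} (so that $w_1$ and $2C_1w_1$ are a super- and a subsolution of classical Hele-Shaw), and the Choi--Jerison--Kim estimates;
\item the extra time scaling $\gamma_n$, which enforces $u_n(\tfrac34\mu_n,0)\le1$;
\item Lemma \ref{Lipdomain}, giving $K\gamma_{n+1}\ge2\gamma_n$, so the time windows nest from scale to scale.
\end{itemize}

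The source error also cannot be absorbed into $\Delta\varphi$. Sup-convolution only gives $-\Delta v\le(1+A_1\|\nabla\varphi\|_\infty)f\circ y$. The paper therefore subtracts $\eps_2w_2$ with $-\Delta w_2=Lr\gamma$. Keeping $\bar v\ge(1+c\eps_1)v$ at the free boundary then requires $w_2\lesssim w_1$ up to the Lipschitz boundary, which again is Proposition \ref{L.2.61}. By contrast, the drift error you single out as the main obstacle is easy. It is linear in $|\nabla\phi|$ and is absorbed by the $-\eps_1|\nabla\phi|^2$ term, because non-degeneracy gives $|\nabla\phi|\ge c_1\gamma$.

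\textbf{Second, your iteration does not actually produce the logarithmic modulus.} With a fixed ratio $1-\delta$ and perturbations of size $O(\lambda^k)$, your recursion $\gamma_{k+1}\le(1-\delta)\gamma_k+Cr_k/\gamma_k$ forces geometric decay of the defect, i.e.\ a H\"older modulus for $\nabla F$. So the claim that the rate is ``limited to'' $1/k$ is asserted rather than derived. It signals that your fixed-ratio lemma, with an order-one time window, is stronger than what this method gives.

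In the paper the rate has a concrete source:
\begin{itemize}
\item the interior gain persists only for $|t|\le\tau_1=c_2\alpha$, with $\alpha=c_4(\pi-2\theta_n)$, so that the time variation stays below the gain $\eta_\nu\delta_\nu\gtrsim\alpha\delta_\nu$;
\item only directions $\nu$ with $\tfrac\pi2-\tfrac\theta2-\langle\mu_1,\nu\rangle\ge\alpha$ gain;
\item the boundary gain in Lemma \ref{L.8.3} carries the factor $\alpha$.
\end{itemize}
The modified intermediate cone Lemma \ref{L.cone}, which tolerates the directions with no gain, then yields $\delta_{n+1}\le\delta_n(1-c_5\delta_n)$, hence $\delta_n\lesssim1/n$ and the stated modulus. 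This is why the result is only $C^1$, matching Kim and Choi--Jerison--Kim for the classical problem.
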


The condition \eqref{nondeg} is often referred to as the non-degeneracy condition. 
Since our result is local in nature, the condition \eqref{cond*} imposed on $u$ at the point $-e_d$ is essential to prevent sudden change originating from outside the local neighborhood. Furthermore, the assumptions of monotonicity and non-degeneracy used here are consistent with the results of \cite[Theorems AB and 7.5]{kimzhang2024}, which established these properties under even broader conditions.

In the specialized case where the drift vanishes ($b \equiv 0$), \cite[Corollary 6.3]{kimzhang2024} demonstrates that the free boundary achieves $C^{1,\alpha}$ regularity. This is achieved by transforming the equation into an obstacle problem for each $t>0$ and applying the regularity estimates from \cite[Theorem 7.1]{blank2001sharp}. However, in the presence of a general drift field $b$, we are only able to establish $C^1$ regularity. This limitation arises from the inherent difficulty in controlling the variation of the solution along the temporal direction as well as the fluctuations coming from the lower order terms. The regularity obtained here is the same as \cite{kim2, CJK} for the classical problem.

We postpone the discussion of the new ingredients in the proof to Subsection \ref{s.1.3}.

\subsection{A 2D advection Muskat problem}

The Hele-Shaw problem is closely related to the Muskat problem (or Hele-Shaw problem with gravity or vertical Hele-Shaw), to which it can be formally reformulated. We will also investigate a 2D advection Muskat problem.  
Consider the periodic domain $(x,y,t)\in \bbT\times \bbR\times [0,\infty)$, and set the vector field $b=(0,\Theta(x))$ for some periodic Lipschitz continuous function $\Theta(x)$.

The goal is to study the propagation property of front-like solutions. So, we look for solutions $u$ such that
$u(x,y,t)\asymp-c_*y$  as $y\to -\infty$ for some $c_*$,
and the free boundary $\Gamma_u(t)$ is represented by a graph $y=F(x,t)$. 
In the case where $\Theta\equiv 0$, $u=c_*(c_*t-y)_+$ solves \eqref{2.6} and the free boundary is given by the graph $y=c_*t$.  When $\Theta\equiv c_*$, $u=c_*(-y)_+$ is a solution with free boundary $F\equiv 0$.

The equation of the graph $F$ is given by
\beq\lb{2.6'}
\partial_t F(x,t)=-{c_*}\calG (F(\cdot,t))(x)+c_*-\Theta (x)\quad\text{ in }\bbT\times[0,\infty),
\eeq
with initial data $F_0$. Here $\calG$ is the Dirichlet-to-Neumann operator defined in \eqref{calG}.

When $\Theta\equiv c_*$, the initial value problem of \eqref{2.6'} (also known as the Muskat problem) was studied in \cite{dong21,dong23,schwab2024well}. They showed the global wellposedness of solutions and that the free boundary is uniformly Lipschitz continuous for all time by establishing the comparison principle for the equation of the free boundary. 
Compared with the aforementioned literatures, our results are different in the following aspects: we consider general advection equations; we show not only uniform Lipschitz continuity of the free boundary but also that the free boundary becomes uniformly $C^1$ after a finite time.

\begin{theorem}{\rm[Advection Muskat problem]}\lb{T.main.2}
Let $F_0,\Theta:\mathbb{T}\to\bbR$ be Lipschitz continuous. Then
there exists a unique viscosity solution $F$ to \eqref{2.6'} with initial data $F_0$, and $F$ is Lipschitz continuous in space locally uniformly in time.

Suppose that the Lipschitz constant of $F_0$ is strictly smaller than  $1/{\sqrt{3}}$ and $\sup_{x\in\bbT}\calG(F_0)<1$. Then there exists $C\geq 1$  depending only on $F_0$ and $\|\Theta\|_{\infty}$ such that if $c_*\geq C$ and $\|\Theta\|_{\Lip}\leq 1/C$, we have that $F$ is uniformly Lipschitz continuous for all $t\geq 0$ and is uniformly  $C^1$ continuous in space for all $t\geq T_0$ for some $T_0>0$.
\end{theorem}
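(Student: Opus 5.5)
We prove Theorem \ref{T.main.2} by reducing it to the local result, Theorem \ref{T.8.4}. We first lift the graph equation \eqref{2.6'} to the bulk Hele-Shaw problem on $\bbT\times\bbR$ with drift $b=(0,\Theta(x))$ and $f\equiv 0$. That is, $u(\cdot,t)$ is harmonic in $\{y<F(x,t)\}$, $u=0$ on the graph, $u\sim -c_*y$ as $y\to-\infty$, and the velocity law $u_t=|\nabla u|^2+b\cdot\nabla u$ holds on the free boundary. We normalize $c_*$ so that $\partial_\nu u=c_*\,\calG(F)$ up to the factor $\sqrt{1+F_x^2}$. A direct computation then shows that \eqref{2.6'} is exactly the normal velocity law.

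\textbf{Existence and uniqueness.} These follow from a comparison principle for viscosity solutions of \eqref{2.6'}. The operator $F\mapsto -c_*\calG(F)$ is degenerate elliptic and translation invariant in $y$, since $\calG$ is monotone under graph ordering when the graphs touch. The term $c_*-\Theta(x)$ is a Lipschitz forcing in $x$. Perron's method with barriers $F_0\pm Mt$ gives existence. Lipschitz continuity in space comes from comparing $F$ with its translate $F(x+h,t)\pm L(t)|h|$, where $L'(t)\le \|\Theta\|_{\Lip}$ absorbs the forcing. This gives $\Lip F(t)\le \Lip F_0+t\|\Theta\|_{\Lip}$, locally uniform in time.

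\textbf{Uniform bounds for all time.} We need a time-independent bound in the small-Lipschitz regime. The plan is to show that the cone-monotonicity of $u$ in directions $W_{\theta,-e_y}$ (equivalently $|F_x|<\tan$ of the angle) is propagated, using the ideas of \cite[Theorems AB and 7.5]{kimzhang2024}. The hypotheses $\Lip F_0<1/\sqrt3$ and $\sup\calG(F_0)<1$ give a strict initial monotonicity with a margin. Specifically, $u_t>0$ initially, and $\partial_e u\ge \delta |\nabla u|$ for all $e$ in a cone of half-angle above $\pi/3$ around $-e_y$.

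When $c_*$ is large, rescale time by $c_*$. The drift perturbation $\Theta/c_*$ then becomes small, and the Lipschitz part of $\Theta$ enters only as $\|\Theta\|_{\Lip}\le 1/C$. A perturbed cone-monotonicity argument of sup/inf-convolution type, $\sup_{B_{\epsilon\phi}}u(\cdot-\varepsilon e)\le u$, with $\phi$ chosen as a subharmonic-type perturbation, preserves monotonicity for all time. This yields uniform Lipschitz continuity.

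Non-degeneracy \eqref{nondeg} and the anchor condition \eqref{cond*} at depth $-e_y$ then follow from Harnack's inequality. Indeed, $u(\cdot,t)$ is harmonic below a uniformly Lipschitz graph and grows like $c_*|y|$, so $u(x_0-\varepsilon e_y,t)\gtrsim \varepsilon$ uniformly. The time derivative at an interior point is bounded via the equation, since $\partial_tF$ is bounded because $\calG F$ is bounded for Lipschitz $F$ in a time-averaged, viscosity sense.

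\textbf{Reaching the hypothesis of Theorem \ref{T.8.4}.} The main obstacle is that Theorem \ref{T.8.4} requires the monotonicity cone to have aperture $\theta\ge\theta_{*,d}$. The initial Lipschitz constant below $1/\sqrt3$ may not be this small, so we must show that the Lipschitz constant improves after a finite time $T_0$. We would attempt this by an improvement-of-flatness/cone-enlargement iteration. The comparison in $y$-translation invariance together with Harnack's inequality in the harmonic region shows that at each time step of order $1/c_*$, the oscillation of $F$ on $\bbT$ decreases by a fixed factor up to an error $O(\|\Theta\|_{\Lip}+\|\Theta\|_\infty/c_*)$. Interior gradient estimates for the velocity then enlarge the monotonicity cone, in the spirit of the cone-enlargement lemma of Caffarelli used in \cite{kim2,CJK}.

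Choosing $C$ large makes the error smaller than the required final flatness, so after finitely many steps $\theta\ge\theta_{*,d}$. Applying Theorem \ref{T.8.4} at every free boundary point, after parabolic rescaling by a uniform radius, gives a uniform logarithmic modulus for $F_x$ for all $t\ge T_0$. This is the claimed uniform $C^1$ continuity.
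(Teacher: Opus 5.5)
\textbf{Gap 1: non-degeneracy.} Your existence, uniqueness and local-in-time bound $\Lip F(t)\le \Lip F_0+\|\Theta\|_{\Lip}t$ match Theorem~\ref{T.2.5}. Your overall plan is also the paper's: get a uniform Lipschitz bound, enlarge the cone to $\theta_{*,2}$ in finitely many steps, then apply Theorem~\ref{T.8.4}.

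The first genuine gap is your claim that $u(x_0-\eps e_y,t)\gtrsim \eps$ follows from Harnack's inequality, because $u(\cdot,t)$ is harmonic below a Lipschitz graph and grows like $c_*|y|$. This is false. If $F(x)=-\lambda|x|$ near $0$, the fluid domain has interior angle $\omega=\pi-2\arctan\lambda<\pi$ at the origin. By the boundary Harnack principle, every positive harmonic function vanishing on the graph there behaves like $d^{\pi/\omega}=o(d)$, however small $\lambda$ is and whatever the far field.

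Non-degeneracy is a dynamic fact. In the paper (Lemma~\ref{C.6.4}) it comes from the following chain:
\begin{itemize}
\item $\sup\calG(F_0)<1$ and $c_*$ large give $F_t\ge m_1\gtrsim c_*$ and $m_1+\min\Theta\ge 0$, hence monotonicity along streamlines (Lemma~\ref{L.2.6}).
\item Together with two-sided bounds on $u$ at a fixed depth and monotonicity in $W_{\theta,-e_2}$ with $\theta>\pi/3$, this verifies the hypotheses of \cite[Theorem 7.5]{kimzhang2024}. That is exactly why $\Lip F_0<1/\sqrt3$ is assumed, not merely to have a ``margin''.
\item \cite[Lemma 11.11]{cbook} and the maximum principle for $-\partial_y u$ then upgrade this to $-\partial_y u\ge ac_*$, with $a$ independent of $c_*$.
\end{itemize}
Without this you lack hypothesis \eqref{nondeg} of Theorem~\ref{T.8.4}. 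You also lack the lower bound on $|\nabla\phi|$ at touching points that the sup-convolution argument uses to absorb the drift error.

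\textbf{Gap 2: the uniform-in-time Lipschitz bound.} A perturbed sup-convolution argument cannot simply ``preserve'' the cone for all time. The Lipschitz part of $\Theta$ costs about $\|\Theta\|_{\Lip}$ in slope per unit time; that is what comparison gives, and rescaling time by $c_*$ does not change it. You need a quantitative gain that beats this loss, and in the paper it is the same mechanism that later enlarges the cone:
\begin{itemize}
\item At distance $r\ge r_0$ from the free boundary, $|u_x|\le 16r^{-1}c_*\|F(\cdot,t)\|_\infty$ (after normalizing $F$ by a vertical translation), while $-u_y\ge ac_*$. So $\nabla u$ is nearly vertical there, which gives a strict interior gain (Lemma~\ref{L.6.1}).
\item Lemma~\ref{L.6.3} and Proposition~\ref{L.5.10}, with the radius functions of Lemma~\ref{L.7.5}, carry this gain to the free boundary. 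This requires the drift error \eqref{add} to be dominated by $\eta$, which is where $\|\Theta\|_{\Lip}\le 1/C$ enters.
\item The result is a fixed angular gain per time step, uniform for cone openings between $\pi/3$ and $\theta_*$. It outweighs the loss and yields both the uniform Lipschitz bound and, after finitely many iterations, $\theta\ge\theta_{*,2}$.
\end{itemize}
Your oscillation-decay step is unproven and unnecessary, since on $\bbT$ the oscillation of $F$ is already controlled by its Lipschitz constant. Small oscillation alone does not flatten the free boundary without the propagation-to-the-boundary step, and that step again needs non-degeneracy.

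\textbf{Minor point.} $\calG F$ is not bounded for merely Lipschitz $F$. The bound $F_t\in[m_1,M_1]$ comes from comparing $F$ with $F_0+m_1t$ and $F_0+M_1t$ and using time-translation invariance.
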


When $F_0$ and $\Theta$ are sufficiently flat and $c_*$ is sufficiently large, it can be shown that the free boundary $F$ is strictly advancing at a finite speed and the solution $u$ is non-decreasing along streamlines (Lemma \ref{L.2.6}). Estimates on $\calG(F)$ in terms of $F$ are provided in \cite[Propositions 2.20, 2.21]{de2017paradifferential}. We also assume an initial Lipschitz constant of less than $1/\sqrt{3}$ to establish a uniform non-degeneracy property of the free boundary. It should be noted, however, that these conditions are by no means optimal.



The global well-posedness is established by the comparison principle and the perron method for the equation of the free boundary $F$.
However, unlike the aforementioned papers, with the appearance of the drift term, the comparison principle only gives local Lipschitz continuity which blows-up as time tends to infinity.

To overcome this difficulty, we leverage the smoothing mechanism provided by the system in Theorem~\ref{T.8.4} to improve the regularity of the free boundary and establish uniform $C^1$ estimates. First, we show that the two notions of viscosity solution — one defined through the free boundary equation and the other through the equation for $u$ — are equivalent, provided either that $F_t$ is bounded from above or that the solution is non-degenerate (Lemma~\ref{L.5.2}). We then show that the graph of the free boundary is uniformly Lipschitz continuous (which is slightly stronger than Lipschitz free boundary) and that the Lipschitz constant decreases in time. The interior gain comes from the observation that $\nabla u$ is almost vertical (Lemma~\ref{L.6.1}), and then we apply the sup-convolution technique (Proposition~\ref{L.5.10}).



\subsection{Literature review}

The story begins with the classical Hele-Shaw problem in which $f=b=0$ \cite{HS1898,richardson1972hele}. The equation describes a fluid moving between two narrow, parallel plates. Early mathematical treatments approached it through variational inequalities \cite{EJ} and classical solution theory for smooth initial data \cite{escher1997classical}, while \cite{constantin1993global} established global existence for small analytic perturbations.


\smallskip

The ``Golden Era'' of this theory was defined by the landmark works of Caffarelli and others, who developed several powerful PDE tools to study the regularity properties of free boundaries (see \cite{cbook}). Building upon these foundations, a series of papers by Kim provided a thorough study of the Hele-Shaw problem. This began with the introduction of viscosity solutions, establishing the existence and uniqueness of solutions in \cite{kim2003}. Subsequently, it was proven in \cite{kim2} that the free boundary is $C^1$ provided it is Lipschitz continuous and non-degenerate. Later, \cite{CJK, CJK2} showed that the free boundary becomes smooth immediately, even when initiated from a rough state. The regularity of free boundaries at large time was addressed in \cite{kim3}. Further advancements include the work in \cite{figalli20}, which leveraged the problem’s connection to the obstacle problem to establish generic regularity for free boundaries. In contrast, researchers have also explored the limits of the smoothing effect; for instance, \cite{persis, JK1} demonstrated that singularities on the free boundary can persist for a short period before regularity is achieved.


\smallskip
The classical Hele-Shaw problem is equivalent to the one-phase Muskat problem, which was first introduced by \cite{muskat} to describe the evolution of the interface of fluids in a porous medium governed by Darcy's law. 
A large number of works address well-posedness by reducing to the equation for the graph of the boundary, including \cite{ala21, ams20, dong21, dong23, np20}. Local well-posedness was established in \cite{cheng2016well, cordoba2011interface}, among others. In contrast, \cite{ccfg2013, ccfg2016} showed that splash singularities can form from non-graph initial data even in the stable regime, underscoring the critical role of the graph condition. For small initial data, global well-posedness was obtained in \cite{nguyen2022global} in a scaling-invariant Besov space. \cite{constantin2013global,constantin2016muskat,gancedo2019muskat} concerned ``medium data'' in the Wiener algebra, and \cite{cameron2018global,cameron2020global} in Lipschitz norm. For arbitrarily large Lipschitz initial data, 
global well-posedness and Lipschitz regularity were established in two spatial dimensions in \cite{dong21} and in three spatial dimensions in \cite{dong23}; the well-posedness results were generalized to all dimensions in \cite{schwab2024well} for viscosity solutions, and in \cite{ak23} for variational solutions.

\smallskip

The general form of our main equation can be derived from  biological modeling, particularly the spread of tumor cells. Much of the recent literature has focused on the incompressible limit to  Hele-Shaw type flows from models that use Porous Medium diffusion. Within this framework, the convergence of solutions has been rigorously justified (see e.g., \cite{PQV,kim2023incompressible,david2021free,SulakTuranova}), as has the convergence of the free boundaries themselves (\cite{tong2025convergence}). Furthermore, nutrient interactions were introduced, which necessitates the inclusion of the drift term in the free boundary condition \cite{jacobs2022tumor,David_S}. 

Another major theme in research concerns flows in complex or heterogeneous media, governed by periodic or stochastic oscillations. We only list a few papers in this direction \cite{contact,KimMellet2009,povzar2015homogenization,turanova2025hele} etc.



\subsection{New ingredients in proving $C^1$-regularity}\lb{s.1.3}
As discussed in \cite{kimzhang2024}, the regularization mechanism of this system arises from the parabolicity induced by the $|\nabla u|^2$ term. To improve the free boundary regularity from Lipschitz to $C^1$, we adopt the ``cone of monotonicity'' strategy developed in \cite{cbook} and applied to Hele-Shaw in \cite{kim2, CJK}. 

Unlike the parabolic cases in \cite{cbook}, the Hele-Shaw problem requires additional effort to obtain temporal regularity. We address this by applying the $u_t$ control developed in \cite{kim2, CJK}. For problems with lower-order terms, we combine this approach with \cite[Proposition 4.1]{kimzhang2024}, which establishes that harmonic and superharmonic functions remain comparable near the free boundary in domains with small Lipschitz constants.


\smallskip

The first step in Caffarelli's strategy is the interior improvement of monotonicity. Consider a solution $u > 0$ in $B_2(0)$ with a primary growth direction $\nu = \frac{\nabla u(0)}{|\nabla u(0)|}$. 
Caffarelli observed that if $u$ is non-decreasing along a direction $\sigma$ (then $\sigma \cdot \nu \geq 0$), this monotonicity can be improved uniformly across $B_1(0)$. Specifically, there exists a radius $r \geq 0$, proportional to  $\sigma \cdot \nu =  |\nu| \cos\langle\sigma, \nu\rangle$, such that $u$ is monotone along all directions in $\overline{B_r(\sigma)}$ for every point in the unit ball.  This widening of the monotonicity cone in the interior provides the necessary gain to reach the free boundary.

In our setting, we further require this uniform improvement to hold across both space and time while simultaneously compensating for fluctuations introduced by the lower-order terms. Specifically, we must ensure that $\sigma \cdot \nu$ remains strictly positive and demonstrate that monotonicity still improves despite these perturbations. This necessitates a more delicate analysis with a carefully selected positive lower bound. And we establish a new intermediate cone lemma (Lemma \ref{L.cone}) that refines the classical result in \cite{cbook} to account for the influence of source and drift effects.

\smallskip

Another distinct feature of our proof is that, unlike the approaches in \cite{CJK} and \cite{kim2}, we work in a local setting and without having uniform boundedness of $\nabla u$. Enlightened by the idea in \cite{CJK}, we rescale the solution. For a constant $K \geq 1$ to be determined, we define the sequence of rescaled solutions for $n \geq 1$:
$$
u_n(x,t) := \gamma_n K^n u(K^{-n} x, K^{-n} \gamma_n t),
$$
where the scaling factor $\gamma_n$ is defined by:
$$\gamma_n := 
\begin{cases} 
\left[ K^n u(K^{-n} \tfrac{3}{4} \mu_n, 0) \right]^{-1} & \text{if } u(K^{-n} \tfrac{3}{4} \mu_n, 0) \leq K^{-n}, \\ 
1 & \text{otherwise,}
\end{cases}$$
and $\mu_n$ is a unit spatial vector. 
The strategy is to iteratively improve the monotonicity of $u_n$, which yields higher regularity for the free boundary as we zoom into the free boundary. This iteration requires $u_n$ to remain well-behaved over a time interval sufficient for monotonicity gain, implying $\gamma_n$ cannot become arbitrarily small. While $\gamma_n$ is difficult to control independently, Lemma \ref{Lipdomain} shows that $K \gamma_{n+1} \geq 2\gamma_n$ holds uniformly for some $K$, providing the necessary temporal scale for the gain to propagate.


\subsection{Outline}
Section \ref{S2} is devoted to notations, definitions and several preliminary lemmas. In Section \ref{S3}, we prove the interior gain of monotonicity. In Section \ref{S4}, we prove the main result of $C^1$ free boundary. The last Section \ref{S5} discusses the vertical Hele-Shaw problem (or Muskat problem) with advection terms. 

\subsection{Acknowledgements}
Y. P. Zhang acknowledges support from NSF CAREER grant DMS-2440215 and Simons Foundation Travel Support MPS-TSM-00007305. The author is grateful to Inwon Kim for the valuable comments.

\section{Preliminaries}\lb{S2}


Letting $u:\bbR^d\times [0,\infty)\to [0,\infty)$, we write 
\[
\Omega_u:=\{u(\cdot,\cdot)>0\},\quad \Omega_u(t):=\{u(\cdot,t)>0\},
\]
\[
\Gamma_u(t):=\partial \Omega_u(t),\quad \Gamma_u:=\bigcup_t\, \Gamma_u(t)\times\{t\} .
\]
We also denote 
\[
\calQ_{r,s}(x,t):=B_r(x)\times (t-s,t+s).
\]
For $r>0$, we denote
\[
\calQ_r:=B_r\times (-r,r).
\]

The notions of viscosity solution to \eqref{1.1} is given in \cite{kim2003} (also see \cite{kimzhang2024}). 

\begin{definition}\lb{D.21}
A non-negative continuous function $u$ defined in $\calQ_r$ is a viscosity subsolution of \eqref{1.1} if for every $\phi\in C^{2,1}_{x,t}(\calQ_r)$ such that $u-\phi$ has a local maximum in $\overline{\Omega_u}\cap\{t\leq t_0\}\cap\calQ_r$ at $(x_0,t_0)$, then
\begin{align*}
    -(\Delta\phi+f)(x_0,t_0)\leq 0\quad &\text{ if } u(x_0,t_0)>0\\
    (\phi_t-|\nabla\phi|^2-\vec{b}\cdot\nabla\phi)(x_0,t_0)\leq 0\quad &\text{ if } (x_0,t_0)\in\Gamma_u \hbox{ and } -(\Delta\phi+f)(x_0,t_0)>0.
\end{align*}

The function $u$ is a {\it viscosity supersolution} of \eqref{1.1} if for every $\phi\in C^{2,1}_{x,t}(\calQ_r)$ such that $u-\phi$ has a local minimum in $\{t\leq t_0\}\cap\calQ_r$ at $(x_0,t_0)$, then
\begin{align*}
    -(\Delta\phi+f)(x_0,t_0)\geq 0\quad &\text{ if } u(x_0,t_0)>0\\
    (\phi_t-|\nabla\phi|^2-\vec{b}\cdot\nabla\phi)(x_0,t_0)\geq 0\quad &\text{ if } (x_0,t_0)\in\Gamma_u,\, |\nabla\phi(x_0,t_0)|\neq 0 \text{ and }\\
    &-(\Delta\phi+f)(x_0,t_0)<0.
 \end{align*}

We say that a continuous non-negative function $u$ is a {\it viscosity solution} of \eqref{1.1} if $u$ is both a viscosity subsolution and a viscosity supersolution of \eqref{1.1}.
\end{definition}

For two vectors $\nu,\mu \in  \bbR^d \setminus\{0\}$,  the angle between them is denoted as
\[
\langle \nu,\mu\rangle:=\arccos\left(\frac{\nu\cdot\mu}{|\nu||\mu|}\right)\in [0,\pi].
\]
The half-space with interior normal $\nu$ is 
\[
H(\nu):=\{x\in\bbR^d\,|\,x\cdot\nu\geq 0\}.
\]
We denote a spacial cone to direction $\mu\in\mathbb{S}^{d-1}$ with opening $2\theta$ for $\theta\in [0,\frac\pi2]$ as
\begin{equation*}
W_{\theta,\mu} := \left\{p\in\mathbb{R}^{d}: \,\langle p,\mu\rangle \leq \theta\right\}.
\end{equation*}


Next we introduce \textit{streamlines}. They are defined as the unique solution $X(t;x_0)$ of the ODE:
\begin{equation}\label{ode}
\left\{\begin{aligned}
    &\partial_t X(t;x_0)={-}{b}(X(t;x_0)), \quad t\in \bbR,\\
    &X(0;x_0)=x_0.
    \end{aligned}\right.
\end{equation}
We write $X(t):=X(t;0)$.

\subsection{Some properties of solutions to the Poisson problem}
In this section, we recall several results from \cite{kimzhang2024} and  Dahlberg's lemma, which will be frequently used in this paper.

\begin{lemma}{\rm \cite[Lemma 2.12]{kimzhang2024}}\lb{l.2.2}
Let $f:\bbR^d\to \bbR$ be Lipschitz continuous, let $r>0$ and let $\rho:\overline{B}_{2 r}\to [0,\infty)$, $\rho\in C^2(\overline{B_{2 r}})$  be a classical solution to 
$$-\Delta \rho=f,\ \ {\rm{in}}\ B_{2 r}.$$
Then, there exists a constant $C>0$, depending only on the dimension, such that for all $x\in B_{r}$
\begin{align*}
    \rho(x)\leq C\rho(0)+Cr^2\|f\|_{L^\infty(B_{2r})},\quad\quad
    |\nabla \rho(x)|\leq Cr^{-1}\rho(0)+ Cr\| f\|_{L^\infty(B_{2r})}.
\end{align*}
\end{lemma}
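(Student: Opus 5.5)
The statement is a Harnack inequality plus a gradient estimate for a nonnegative solution of a Poisson equation with bounded right-hand side. The plan is to split $\rho$ into a harmonic part, which carries the dependence on $\rho(0)$, and a correction, which carries the dependence on $f$.

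First, by scaling we reduce to $r=1$. Setting $\tilde\rho(x)=\rho(rx)$ gives $-\Delta\tilde\rho=r^2 f(r\cdot)$, which yields exactly the stated factors $r^2\|f\|_\infty$ and $r^{-1}$ (respectively $r$) in the gradient bound.

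With $r=1$, let $w$ be the Newtonian potential of $f\chi_{B_2}$, or alternatively the solution of $-\Delta w=f$ in $B_2$ with $w=0$ on $\partial B_2$. Standard estimates give $\|w\|_{L^\infty(B_2)}\le C\|f\|_{L^\infty(B_2)}$.

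Set $h=\rho-w+M$ with $M=C\|f\|_{L^\infty}\ge\|w\|_\infty$. Then $h$ is harmonic in $B_2$ and nonnegative there, since $\rho\ge0$.

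\emph{Value bound.} Apply the Harnack inequality to $h$ on $B_{3/2}$, which gives $\sup_{B_{1}}h\le C h(0)$. Now $h(0)\le\rho(0)+2M$, and $\rho\le h+M$. Hence
\[
\rho(x)\le C\rho(0)+C\|f\|_\infty\quad\text{for } x\in B_1.
\]

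\emph{Gradient bound.} Write $\nabla\rho=\nabla h+\nabla w$. For the harmonic part, interior derivative estimates give $|\nabla h(x)|\le C\sup_{B_{1/2}(x)}h$ for $x\in B_1$. Since $h\le Ch(0)$ on $B_{3/2}$ by Harnack, this yields $|\nabla h|\le C(\rho(0)+\|f\|_\infty)$. For the correction, $|\nabla w|\le C\|f\|_\infty$ on $B_1$, from the gradient bound for the Newtonian potential of a bounded density, namely $|\nabla\Gamma|\sim|x|^{1-d}$, which is integrable. If the Dirichlet version of $w$ is used instead, the same bound follows from $W^{2,p}$ estimates with $p>d$ together with Morrey's inequality.

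Combining the two parts and undoing the scaling gives both inequalities. There is no real obstacle here. The only care needed is to apply Harnack on a slightly larger ball than $B_1$, so that the interior gradient estimate has room, and to note that the Lipschitz regularity of $f$ is not needed: only $\|f\|_\infty$ enters.
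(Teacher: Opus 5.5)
Your proof is correct. This paper does not prove the statement itself; it is quoted verbatim from \cite[Lemma 2.12]{kimzhang2024}, so there is no in-paper argument to compare against.

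Your route is the standard proof of this fact:
\begin{itemize}
\item rescale to $r=1$;
\item write $\rho=h+w-M$, where $w$ is a Poisson correction with $\|w\|_{L^\infty(B_2)}+\|\nabla w\|_{L^\infty(B_1)}\le C\|f\|_{L^\infty(B_2)}$ and $h\ge 0$ is harmonic in $B_2$;
\item apply $\sup_{B_{3/2}}h\le Ch(0)$ and $|\nabla h(x)|\le 2d\sup_{B_{1/2}(x)}h$.
\end{itemize}

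An equivalent shortcut is to quote two known estimates directly. The first is the Harnack inequality for nonnegative solutions of $-\Delta\rho=f$, namely $\sup_{B_r}\rho\le C(\rho(0)+r^2\|f\|_{L^\infty(B_{2r})})$. The second is the interior gradient bound for Poisson's equation, $\sup_{B_r}|\nabla\rho|\le C(r^{-1}\sup_{B_{3r/2}}|\rho|+r\|f\|_{L^\infty(B_{2r})})$. Your decomposition derives both of these from their harmonic counterparts, so it is more self-contained.

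Two minor remarks. First, since $w\le M$, you actually have $\rho\le h$, which is slightly cleaner than $\rho\le h+M$. Second, you are right that only $\|f\|_{L^\infty}$ enters. Lipschitz continuity of $f$ at most makes $w$ a classical $C^2$ solution, and even that is dispensable. Without it, $h$ is harmonic in the distributional sense and hence smooth by Weyl's lemma. Also, $\nabla\rho=\nabla h+\nabla w$ still holds pointwise because both terms are continuous.
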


\begin{definition}
Let $\theta\in [0,\frac\pi2]$, $\mu\in\bbS^{d-1}$, $\eps\in [0,1)$ and $a\geq 0$. 
We say that a continuous function $\rho:\bbR^d\supset\Omega\to \bbR$  is {\it $(\eps,a)$-monotone} 
with respect to a cone $W_{\theta,\mu}$ if for every $\eps'\geq \eps$ we have
\[
(1+a\eps)\,\rho(x)\leq \inf_{y\in B_{\eps'\sin\theta}(x)}\rho(y+\eps'\mu)
\]
\end{definition}

The following lemma says that $(\eps,a)$-monotonicity implies full monotonicity in the interior.

\begin{lemma}{\rm \cite[Lemma 3.1]{kimzhang2024}}\lb{L.2.4}
Let $f\geq 0$ be Lipschitz continuous on $\overline{B}_1$, $a\geq 0$ and $\eps,\kappa_1\in (0,1)$. There exists $C=C(d)>0$ such that the following holds for all $\eps$ small enough (depending only on $d,a,\kappa_1$). If $\rho$ is a non-negative solution to $
-\Delta \rho=f$ in $B_{\eps^{1-\kappa_1}}$, and $\rho$ is $(\eps,a)$-monotone with respect to $W_{0,\mu}$ for $\mu\in\bbS^{d-1}$,
then
\[
\nabla_\mu\rho(x)\geq a(1-C\eps^{\kappa_1})\rho(x)-C(1+a)\eps^{2-\kappa_1}\|f\|_{C^1(B_1)},\quad{\mathrm{for\ all\ }}x\in B_{\eps}. 
\]
\end{lemma}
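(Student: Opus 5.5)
The statement to prove is Lemma \ref{L.2.4}: $(\eps,a)$-monotonicity along $W_{0,\mu}$ should upgrade to a pointwise lower bound on $\nabla_\mu\rho$ in $B_\eps$. The plan is to compare $\rho$ with its own translates. For fixed $x\in B_\eps$ and every $\eps'\ge\eps$, monotonicity with $\theta=0$ gives
\[
(1+a\eps)\rho(x)\le \rho(x+\eps'\mu).
\]
Taking $\eps'=\eps$ exactly produces a finite-difference inequality $\rho(x+\eps\mu)-\rho(x)\ge a\eps\rho(x)$ for every $x$ with $x,x+\eps\mu$ inside the domain. The difficulty is that this controls only a difference quotient at scale $\eps$, not the derivative. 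I will convert the difference into the derivative using interior regularity of the Poisson equation on the larger ball $B_{\eps^{1-\kappa_1}}$. The scale gap $\eps\ll\eps^{1-\kappa_1}$ is what produces the error factor $\eps^{\kappa_1}$.

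\textbf{Step 1: reduce to the harmonic case.} Let $w$ be a quadratic correction solving $-\Delta w=f$ near $x$, for instance $w(y)=-\tfrac{1}{2d}f(x)|y-x|^2$ plus a cubic term handling the $C^1$ part of $f$. Then $h=\rho-w$ is harmonic up to an error of size $\|f\|_{C^1}|y-x|$. Alternatively, split $\rho=h+N$ on $B_{R}$ with $R=\eps^{1-\kappa_1}$, where $N$ is the Newtonian potential of $f$ over $B_R$. Then
\[
|N|\lesssim R^2\|f\|_\infty,\qquad |\nabla N|\lesssim R\|f\|_\infty,\qquad |D^2N|\lesssim \|f\|_{C^1}R\log,
\]
so $D^2 N$ is essentially bounded by $\|f\|_{C^1}$.

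\textbf{Step 2: Taylor expansion along $\mu$.} Write
\[
\rho(x+\eps\mu)-\rho(x)=\eps\nabla_\mu\rho(x)+\int_0^\eps (\eps-s)\,\partial_{\mu\mu}\rho(x+s\mu)\,ds.
\]
The second derivative of the harmonic part is bounded by $C R^{-2}\sup_{B_{R/2}}h$ via Lemma \ref{l.2.2}-type estimates, and Lemma \ref{l.2.2} also gives Harnack control $\sup_{B_{R/2}}\rho\le C\rho(x)+CR^2\|f\|$. The naive bound $\eps^2R^{-2}\rho(x)=\eps^{2\kappa_1}\rho(x)$ is not good enough when divided by $\eps$. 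So instead of one step I average the monotonicity over many steps. For $\eps'=k\eps$, monotonicity gives the weaker relative gain $(1+a\eps)$ at each of $k$ points along the line. Iterating gives
\[
\rho(x+k\eps\mu)\ge(1+a\eps)^k\rho(x),
\]
applying the inequality successively at $x,x+\eps\mu,\dots$, which stay in the domain as long as $k\eps\ll R$. Choose $k\eps=\ell:=\eps^{1-\kappa_1/2}$ or similar. Then
\[
\rho(x+\ell\mu)-\rho(x)\ge(e^{a\ell}(1-O(a\eps\ell))-1)\rho(x)\approx a\ell\rho(x)(1+O(a\ell)).
\]
Taylor's remainder is now bounded by $\ell^2R^{-2}\rho(x)+\ell^2\|f\|_{C^1}$. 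Dividing by $\ell$ gives
\[
\nabla_\mu\rho(x)\ge a\rho(x)-C(\ell R^{-2}+a\ell)\rho(x)-C(1+a)\ell\|f\|.
\]

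\textbf{Step 3: optimize the scales (main obstacle).} This is where I expect the difficulty. With $\ell=\eps$, i.e. $k=1$, the remainder $\ell R^{-2}=\eps^{-1+2\kappa_1}$ is too large. To get both relative error $C\eps^{\kappa_1}$ and absolute error $\eps^{2-\kappa_1}\|f\|$, I would instead use the harmonic-function trick: $\nabla_\mu h$ is itself harmonic, and the difference quotient $D_\eps h(y)=(h(y+\eps\mu)-h(y))/\eps$ satisfies $D_\eps h\ge a h$ up to $f$-errors on all of $B_{R/2}$. Then
\[
|\nabla_\mu h(x)-D_\eps h(x)|\le \eps\sup|D^2h|\lesssim \eps R^{-2}\sup h .
\]
Since $\sup h\lesssim R\,\nabla_\mu h$-scale via Harnack applied to the nonnegative harmonic quantity $D_\eps h-a h+$error, this yields a relative error $\eps/R=\eps^{\kappa_1}$. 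The $f$ terms contribute $\eps R\|f\|_{C^1}$-type errors, which after bookkeeping should give $\eps^{2-\kappa_1}$. Getting these error exponents exactly to match the statement is the delicate bookkeeping step.
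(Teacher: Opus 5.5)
Your outline has a genuine gap: it never supplies a way to turn the scale-$\eps$ inequality into a derivative bound with relative error $\eps^{\kappa_1}$, and the decisive sentence of Step 3 is false.

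\textbf{Steps 2 and 3 do not work.} Step 2 is counterproductive. After dividing by $\ell$, the Taylor remainder is $\ell R^{-2}\rho$, which grows with $\ell\ge\eps$. So the best case is $\ell=\eps$, with error $\eps R^{-2}\rho=\eps^{2\kappa_1-1}\rho$. This exceeds the admissible $a\eps^{\kappa_1}\rho$ by the factor $(aR)^{-1}\to\infty$.

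In Step 3 you bound $|\nabla_\mu h(x)-D_\eps h(x)|\le\eps\sup|D^2h|\lesssim\eps R^{-2}\sup h$. You then claim $\sup h\lesssim R\,\nabla_\mu h$ via Harnack for $k:=D_\eps h-ah\ge0$. No such inequality holds. Take $h=1+by_1$ with $b=a/(1-aR)$: it is positive, harmonic and $(\eps,a)$-monotone in $B_R$, yet $R\nabla_\mu h\approx aR\ll1\approx\sup h$.

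Harnack for $k$ controls $\partial_\mu k=D_\eps(\partial_\mu h)-a\partial_\mu h$. That is only a segment average of $\partial_\mu^2h$, not $\partial_\mu^2h$ itself. As long as the remainder is estimated through interior estimates for $h$, you cannot go below $\eps R^{-2}h$.

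The underlying obstruction is that harmonic oscillations of period $\eps$ along $\mu$ are invisible to $D_\eps$. In two dimensions, $\sin(2\pi\,y\cdot\mu/\eps)\cosh(2\pi\,y\cdot\mu^\perp/\eps)$ is such a mode. These modes are harmless at $x$ only because $h$ is positive and harmonic on a ball of radius $R\gg\eps$, and nothing in your argument uses this.

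\textbf{How to close the gap.} Use $k$ to control the remainder, and bootstrap. Define $Tg(y):=\int_0^\eps\frac{\eps-s}{\eps^2}\,g(y+s\mu)\,ds$; it has mass $\tfrac12$ and commutes with $\partial_\mu$. Taylor's formula gives the exact identity $\nabla_\mu h=ah+k-\eps\,T\partial_\mu(\nabla_\mu h)$. Iterating it gives
\[
\nabla_\mu h=\sum_{j=0}^{m-1}(-\eps T\partial_\mu)^j(ah+k)+(-\eps T\partial_\mu)^m\nabla_\mu h .
\]
Evaluate at $x$ and use interior estimates together with Harnack for $h\ge0$ and for $k\ge0$.
\begin{itemize}
\item For $j\ge1$, the $j$-th term is bounded by $C_j(\eps/R)^j\big(ah(x)+k(x)\big)$.
\item The remainder is bounded by $C_m\eps^mR^{-m-1}h(x)=C_m\eps^{(m+1)\kappa_1-1}h(x)$.
\end{itemize}
Take $m>1/\kappa_1$, then $\eps$ small depending on $d,a,\kappa_1$. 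This gives $\nabla_\mu h(x)\ge a(1-C\eps^{\kappa_1})h(x)$. The factor $1-C\eps^{\kappa_1}$ comes from the $j=1$ term $a\eps T\partial_\mu h$. The $k$-terms are absorbed by $k(x)\ge0$. This step is exactly where the smallness of $\eps$ comes to depend on $a$ and $\kappa_1$.

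\textbf{The source term also needs repair.} Your bound on $D^2N$ is not correct: $D^2N$ has a part of size $\|f\|_\infty$, not $\|f\|_{C^1}R\log$. The naive remainder would therefore leave an error $\eps\|f\|_\infty$ rather than $\eps^{2-\kappa_1}\|f\|_{C^1}$. The stated term $(1+a)\eps R\|f\|_{C^1}=(1+a)\eps^{2-\kappa_1}\|f\|_{C^1}$ arises naturally if you work with $v:=D_\eps\rho-a\rho\ge0$. Its Laplacian $af-D_\eps f$ is bounded by $(1+a)\|f\|_{C^1}$, so $|\nabla v|\lesssim v/R+(1+a)R\|f\|_{C^1}$. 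Put all higher derivatives on the harmonic part of $\rho$.
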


Now, consider a Lipschitz continuous function $g: \bbR^{d-1} \to \bbR$ with $g(0)=0$ and Lipschitz constant $c_g \in(0,\frac12)$. For a fixed scale $L \geq 2$, we define a unit-width curvilinear strip $\Sigma'_L$ situated directly beneath the graph of $g$ within the ball $B_L$:
$$
\Sigma'_L := B_L \cap \{x=(x',x_d) : g(x') - 1 < x_d < g(x')\}.
$$
We further denote the lower boundary of this strip as:
$$
\partial_b \Sigma'_L := B_L \cap \{x=(x',x_d) : x_d = g(x') - 1\}.
$$

We recall the well-known Dahlberg's lemma.   
\begin{lemma}{~\rm (\cite{dah})}
Let $w_1,w_2$ be two non-negative harmonic functions in $\Sigma_L$. Assume further that $w_1=w_2=0$ along the graph of $g$. Then, there exists $C>1$ depending only on $d$ such that
\[
\frac{1}{C}\leq \frac{w_1(x',x_d)}{w_2(x',x_d)}\cdot \frac{w_2(0,3/4)}{w_1(0,3/4)}\leq C
\]
in $\left\{(x',x_d)\,|\, |x'|<1,\,|x_d|<3/4,\, x_d<g(x')\right\}$.
\end{lemma}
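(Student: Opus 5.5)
The statement is Dahlberg's boundary Harnack principle, and I would prove it along the classical route of Caffarelli--Fabes--Mortola--Salsa, using the graph structure of $\Sigma'_L$ (written $\Sigma_L$ in the statement). I read the reference point as $A:=(0,-3/4)$. The point $(0,3/4)$ written in the statement lies above the graph of $g$, where the $w_i$ are not defined, so I take it to be a typo for $A$. By linearity and scaling I normalize $w_1(A)=w_2(A)=1$ and show $C^{-1}\le w_1/w_2\le C$ on the region $D:=\{|x'|<1,\,|x_d|<3/4,\,x_d<g(x')\}$. Since $c_g<\tfrac12$ and $L\ge 2$, every boundary point $x_0=(x_0',g(x_0'))$ with $|x_0'|\le 1$ carries boxes $T_r(x_0)=\{|x'-x_0'|<r,\ g(x')-r<x_d<g(x')\}$ inside $\Sigma'_L$ for $r\le 1/2$. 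It also carries the interior point $A_r(x_0)=x_0-\tfrac r2 e_d$, whose distance to $\partial\Sigma'_L$ is comparable to $r$. All constants below depend only on $d$, because $c_g<1/2$ is fixed.

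\textbf{Step 1 (Harnack chains and boundary decay).} Any two points of $D$ at distance $\ge r$ from the graph are joined by a Harnack chain of length $\le C\log(2/r)$. Hence $C^{-1}r^{\beta}\le w_i(x)\le C r^{-\beta}$ at such points. Each boundary point has an exterior cone of fixed opening, namely the region above the graph. Comparing with a positive harmonic function in the complement of that cone, vanishing on its boundary and homogeneous of some degree $\alpha>0$, gives boundary Hölder decay: if $w=0$ on the graph part of $\partial T_{2r}(x_0)$, then $\sup_{T_{\rho}(x_0)}w\le C(\rho/r)^{\alpha}\sup_{T_{2r}(x_0)}w$.

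\textbf{Step 2 (Carleson estimate).} Next I would show $\sup_{T_r(x_0)}w\le C\,w(A_r(x_0))$. The argument is by contradiction. If $w(y_1)\ge M w(A_r)$ for a large $M$, then the Harnack bound of Step 1 forces $y_1$ to be within $\delta_1\approx M^{-1/\beta}r$ of the boundary. The decay estimate applied around the nearest boundary point then produces $y_2$ with $w(y_2)\ge 2w(y_1)$ and $|y_2-y_1|\lesssim\delta_1$. Iterating gives a sequence $y_k$ converging to the graph while $w(y_k)\to\infty$, with $\sum|y_{k+1}-y_k|<r$. This contradicts the fact that $w$ vanishes continuously on the graph.

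\textbf{Step 3 (comparison with harmonic measure; the main obstacle).} On $T_r=T_r(x_0)$, let $\omega$ be the harmonic measure of the top face, i.e.\ of the part of $\partial T_r$ at distance $\ge r/4$ from the graph. The maximum principle together with Harnack gives the lower bound $w_2\ge c\,w_2(A_r)\,\omega$ in $T_{r/2}$. The hard direction is $w_1\le C\,w_1(A_r)\,\omega$ in $T_{r/2}$. The difficulty is that $w_1$ may be large near the lateral sides of the box close to the graph, where $\omega$ is small. I would try the CFMS box argument first. By Carleson, $w_1\le C w_1(A_r)$ on $\partial T_r$, so it suffices to bound the harmonic measure of the lateral strip within distance $\epsilon r$ of the graph by $C\omega$ in $T_{r/2}$. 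This comes from a dyadic Whitney decomposition near the lateral sides. On each layer I would use the decay from Step 1 and the Harnack lower bound from the top, so that the powers $(\epsilon)^{\alpha}$ beat the geometric number of layers. Smallness of $c_g$ makes the exterior-cone exponent $\alpha$ close to $1$, which should make this summation comfortable.

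Combining the two directions of Step 3 gives $w_1(x)/w_2(x)\le C\,w_1(A_r)/w_2(A_r)$ in $T_{r/2}(x_0)$. I would apply this with $r=1/2$ at every $x_0$ with $|x_0'|\le1$, and use Harnack chains to relate $A_{1/2}(x_0)$ to $A$. This gives the upper bound on $D$ near the graph, while the part of $D$ away from the graph is handled by Harnack alone. Swapping $w_1$ and $w_2$ gives the lower bound.
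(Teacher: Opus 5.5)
The paper gives no proof of this lemma; it only quotes Dahlberg \cite{dah}. Your Steps 1--2 (Harnack chains, boundary H\"older decay, Carleson estimate) are standard and, localized to the boxes $T_r(x_0)$, they are fine. You are also right that the reference point must be $(0,-3/4)$.

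\textbf{The gap is Step 3, which carries the entire content of the boundary Harnack principle.} You reduce to $\omega^x(S)\le C\,\omega^x(\mathrm{top})$ for $x\in T_{r/2}$, where $S$ is the lateral strip near the graph. This compares two positive harmonic functions in $T_r$ that both vanish on the graph near $x_0$, so it is itself an instance of the lemma.

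The tools you list cannot give it as $x$ approaches the graph. Let $d(x)$ be the distance to the graph and $S_j$ a layer at height $\sim 2^{-j}r$. Boundary decay gives $\omega^x(S_j)\lesssim 2^{-j\alpha}(d(x)/r)^{\alpha}$ with $\alpha<1$ (exterior cone). Harnack chains or an interior-cone barrier give only $\omega^x(\mathrm{top})\gtrsim (d(x)/r)^{\beta}$ with $\beta>1$. The quotient is therefore bounded only by $2^{-j\alpha}(d(x)/r)^{\alpha-\beta}$, which blows up as $d(x)\to 0$. Small $c_g$ pushes $\alpha$ and $\beta$ toward $1$ but never makes them equal.

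What you actually obtain is $\omega^x(S_j)\le C2^{-j\alpha}\omega^x(\mathrm{top})$ only at points at distance $\gtrsim r$ from the graph. The maximum principle cannot carry this downward, because on the lateral faces of any smaller box you meet the same unknown comparison. So the obstacle is not summing the layers; it is comparing within a single layer near the graph.

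\textbf{The missing idea is the core of the CFMS argument: move the comparison into the pole variable of the Green function} $G=G_{T_r}$. Take $X\in T_{r/2}$, a surface ball $I_s(P):=\partial T_r\cap B_s(P)$ away from $X$, and an interior point $A_s(P)$ at distance $\sim s$ from $P$ and from $\partial T_r$. One shows $\omega^X(I_s(P))\approx s^{d-2}G(X,A_s(P))$.
\begin{itemize}
\item The lower bound follows from the maximum principle outside a small ball around $A_s(P)$.
\item For the upper bound, take a cutoff $\phi$ at scale $s$. Then $\omega^X(I_s(P))\le\int_{T_r}G(X,Y)\,\Delta\phi(Y)\,dY\lesssim s^{-2}\int_{B_{2s}(P)\cap T_r}G(X,Y)\,dY$, and your Carleson estimate applied to $Y\mapsto G(X,Y)$ at $P$ finishes it.
\end{itemize}
Boundary decay is then applied to $G(X,\cdot)$ near the edge where the lateral face meets the graph, which is far from $X$. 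This gives $G(X,A_{2^{-j}r}(P))\lesssim 2^{-j\alpha}G(X,A^*)$ for a fixed interior point $A^*$. With about $2^{j(d-2)}$ balls per layer, you get $\omega^X(S_j)\lesssim 2^{-j\alpha}r^{d-2}G(X,A^*)\lesssim 2^{-j\alpha}\omega^X(\mathrm{top})$. The $X$-dependence cancels and the layers sum; this is where your intuition that the powers beat the number of pieces becomes valid. An iteration in the style of De Silva--Savin is an alternative way to close Step 3.

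\textbf{A separate problem is in your last step.} The region $\{|x'|<1,\,|x_d|<3/4,\,x_d<g(x')\}$ can come within vertical distance $\tfrac14-c_g$ of the bottom face $\partial_b\Sigma'_L$, and it leaves $\Sigma'_L$ if $c_g\ge\tfrac14$. Nothing vanishes on that face; for example, take $w_1$ to be a Poisson kernel with pole there. So Harnack alone does not handle that part, and the statement itself needs $c_g$ small or a smaller region.
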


Within the domain of $\Sigma_L$, we introduce two non-negative functions, $w_{1,L}$ and $w_{2,L}$, which satisfy the following boundary value problems:
$$
\begin{cases}
-\Delta w_{1,L} = 0, \quad -\Delta w_{2,L} = 1 & \text{in } \Sigma'_L, \\
w_{1,L} = 1, \quad w_{2,L} = 0 & \text{on } \partial_b \Sigma'_L, \\
w_{1,L} = w_{2,L} = 0 & \text{on } \partial \Sigma'_L \setminus \partial_b \Sigma'_L.
\end{cases}
$$
The following result establishes that $w_{1,L}$ and $w_{2,L}$ are comparable, with estimates that remain uniform both up to the boundary and across all scales $L \geq 2$. This comparison is a key technical bridge; it allows us to use the established regularity theory of harmonic functions to analyze our solutions, even in the presence of the source and drift terms.

\begin{proposition}{\rm 
\cite[Proposition 4.1]{kimzhang2024}}\lb{L.2.61}
For $w_{1,L},w_{2,L}$ and $g$ given as above, let $L \geq 2$ and $c_g < \cot \theta_{d}$ for some dimensional constant $\theta_{d}$.
Then
\[
       w_{2,L}\leq C w_{1,L} \quad\text{ in }\Sigma'_{L-1} \hbox{ for some } C=C(d,c_g). 
\]
\end{proposition}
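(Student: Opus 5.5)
The plan is to build an explicit superharmonic barrier out of $w_{1,L}$ itself, and to handle the region near $\partial B_L$ separately by a boundary Harnack argument. Throughout, write $w_1=w_{1,L}$, $w_2=w_{2,L}$ and $d(x)=\operatorname{dist}(x,\operatorname{graph} g)$. The guiding heuristic is that when $c_g$ is small, $\partial\Sigma'_L$ near the graph is an almost flat Lipschitz boundary. Harmonic functions vanishing there should then behave like $d(x)^{1\pm\delta}$, with $\delta\to 0$ as $c_g\to 0$. Meanwhile $w_2$ is only forced by a bounded right-hand side, so it should vanish at least this fast.

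\emph{Step 1: quantitative behaviour of $w_1$.} The first thing I would establish is that $w_1$ is monotone in all directions of a cone $W_{\theta,-e_d}$, where $\theta$ is close to $\pi/2$ once $c_g<\cot\theta_d$. I would prove this by comparing $w_1$ with its translates $w_1(\cdot+\eps\sigma)$ via the maximum principle, using that translating the graph of $g$ in a direction inside the cone moves it into the domain; the boundary data $0$ and $1$ are ordered correctly, and the lateral boundary is handled by staying inside $B_{L-1/2}$. Interior monotonicity in a cone of directions, together with Harnack's inequality, gives the nondegenerate gradient bound
\[
|\nabla w_1(x)|\ \ge\ \partial_{-e_d}w_1(x)\ \ge\ c\,\frac{w_1(x)}{d(x)}\qquad\text{in } \Sigma'_{L-1/2}.
\]
The standard comparison of $w_1$ with harmonic functions in the exterior of a narrow cone gives the growth bound $w_1(x)\ge c\,d(x)^{1+\delta}$ near the graph, with $\delta=\delta(c_g)$ small. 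All constants are uniform in $L$, because every estimate is local at unit scale and $\Sigma'_L$ has unit width.

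\emph{Step 2: barrier.} Set $h:=w_1-\tfrac12 w_1^{p}$ with $p=3/2$. Since $0\le w_1\le 1$, we have $\tfrac12 w_1\le h\le w_1$. A direct computation gives
\[
-\Delta h=\tfrac{p(p-1)}{2}\,w_1^{p-2}|\nabla w_1|^2\ \ge\ c\,w_1^{p}\,d^{-2}.
\]
By Step 1, $d\le C w_1^{1/(1+\delta)}$, and since $\delta<1/3$ we have $2/(1+\delta)>p$. Hence $w_1^{p}d^{-2}\ge c\,w_1^{p-2/(1+\delta)}\ge c$ near the graph, because $w_1\le 1$ and the exponent is negative. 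Away from the graph, interior Harnack makes $-\Delta h$ bounded below anyway. So $-\Delta(Ah-w_2)\ge Ac-1\ge 0$ for large $A$. On the parts of the boundary where both functions are controlled (the graph, where both vanish, and $\partial_b\Sigma'_L$, where $h=1/2$ and $w_2=0$), we have $Ah-w_2\ge 0$. The maximum principle then gives $w_2\le Ah\le A w_1$.

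\emph{Main obstacle: the lateral boundary $\partial B_L$.} Near $\partial B_L$, $w_1$ degenerates and the gradient bound of Step 1 fails. This is exactly why the conclusion is stated only on $\Sigma'_{L-1}$. I would run the comparison of Step 2 in $D:=\Sigma'_L\cap B_{L-1/2}$, which requires controlling $w_2$ by $w_1$ on the spherical piece $\partial B_{L-1/2}\cap\Sigma'_L$. For this I would localize at unit scale around each boundary point $x_0$ and decompose $w_2=v+\eta$ in a small box $Q$ at $x_0$. Here $v$ solves $-\Delta v=1$ with $v=0$ on $\partial Q$, and is bounded by a local version of the Step 2 barrier applied to the harmonic measure of $Q$. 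The remainder $\eta$ is harmonic, bounded by $\sup w_2\le C$ (a unit-width strip bound), and vanishes on the graph. Dahlberg's lemma then bounds both local harmonic functions by $C w_1$, after normalizing at the interior reference point: there $w_1\ge c$ by a Harnack chain from $\partial_b\Sigma'_L$, uniformly in $L$. If making this spherical-boundary control fully rigorous turns out to be delicate, the fallback is to prove Step 1's cone monotonicity only up to $B_{L-1/4}$ and to place the comparison boundary at $B_{L-1/2}$, where Dahlberg's lemma applies with room to spare.
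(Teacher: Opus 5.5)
Your barrier is the right mechanism, and it gives a legitimate self-contained route. The paper itself gives no proof of this proposition; it quotes \cite[Proposition 4.1]{kimzhang2024}. With $h=w_1-\tfrac12 w_1^p$, the bound $|\nabla w_1|\ge c\,w_1/d$ gives $-\Delta h\ge c\,w_1^p d^{-2}$. This is bounded below once $w_1\ge c\,d^{\beta}$ with $p\beta<2$. So the only structural requirement is that the harmonic function of the cone inscribed below each graph point (not ``the exterior of a narrow cone'') has homogeneity $\beta<2$. Any $p\in(1,2/\beta)$ then works, and this is the role played by an exponent $\beta\in(1,2)$ as in \cite[Lemma 2.10]{kimzhang2024}.

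However, two of your auxiliary claims fail as stated.
\begin{itemize}
\item \emph{Cone monotonicity via translates.} The argument cannot be run. Since $w_1=0$ on $\partial B_L$, you get $w_1(x+\eps\sigma)=0<w_1(x)$ whenever $x+\eps\sigma\in\partial B_L$. Restricting to $B_{L-1/2}$ only moves the unknown boundary inequality to $\partial B_{L-1/2}$.
\item \emph{The gradient bound away from the graph.} The bound $\partial_{-e_d}w_1\ge c\,w_1/d$ is false near $\partial_b\Sigma'_L$. There $1-w_1$ vanishes on a Lipschitz graph. Where the strip is locally a cone narrower than a half-space (e.g.\ $g(x')=c_g|x'|$, near $(0,-1)$), one has $|\nabla w_1|\to0$. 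So $-\Delta(Ah-w_2)\ge Ac-1$ breaks down there. Interior Harnack bounds $w_1$ from below, not $|\nabla w_1|$.
\end{itemize}
Both are fixed by running the comparison only in a thin layer $\{d<\rho\}$ under the graph, at distance $\ge\tfrac12$ from $\partial B_L$. There, \cite[Lemmas 11.11, 11.12]{cbook} and \cite[Lemma 4]{Caf87} give cone monotonicity and $|\nabla w_1|\ge c\,w_1/d$. On $\{d=\rho\}$, use $w_1\ge c(\rho)$ and $w_2\le C$. The bound $w_2\le C$ needs a local exterior-density argument, because $\Sigma'_L$ can rise by $c_gL$ and is not contained in a slab of unit width.

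The main gap is the lateral boundary. Bounding the local torsion function by a barrier built from the harmonic measure $\omega_Q$ of the box would need growth and gradient bounds for $\omega_Q$ up to the edge where $\partial Q$ meets the graph. Step 1 does not supply these. Your fallback does not help either: Dahlberg's lemma compares harmonic functions, while the missing inequality $w_2\le Ah$ on $\partial B_{L-1/2}$ concerns the non-harmonic $w_2$.

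Use $w_1$ itself in the local step instead.
\begin{itemize}
\item Take $x_0\in\Sigma'_{L-1}$ with $d(x_0)<\rho/2$, and let $z$ be the graph point above $x_0$. Set $U:=B_{1/4}(z)\cap\{x_d<g(x')\}\subset\Sigma'_L$.
\item Write $w_2=v+\eta$, where $-\Delta v=1$ in $U$ and $v=0$ on $\partial U$.
\item The function $\eta$ is harmonic, satisfies $0\le\eta\le w_2\le C$, and vanishes on the graph. The boundary Harnack principle with reference point $z-\tfrac18e_d$, where $w_1\ge c$, gives $\eta\le Cw_1$ near $z$.
\item Compare $v$ with $A(w_1-\tfrac12w_1^p)$ in $U\cap\{d<\rho\}$. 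The spherical part of $\partial U$ is harmless because $v=0$ there, so $v\le Aw_1$.
\end{itemize}
This gives $w_2\le Cw_1$ directly at every point of $\Sigma'_{L-1}$ near the graph. Points with $d\ge\rho/2$ are trivial, since there $w_1\ge c$ and $w_2\le C$. Your global comparison on $\Sigma'_L\cap B_{L-1/2}$ is then not needed at all.
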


\subsection{Sup-convolution}

In what follows, we recall several properties of  sup-convolutions, first introduced by Caffarelli (see e.g. \cite{Caf87}). These estimates are essential for constructing the sub- and super-solutions in Section \ref{S3}.

For non-negative functions  $u$ in $C(B_1\times (0,T))$ and $\varphi \in C^{2,1}_{x,t}(B_1\times (0,T))$ with $0\leq \varphi \leq 1/2$,  define
\begin{equation}\label{sup}
v(x,t):=\sup_{B_{\varphi(x,t)}(x)}u(y,t)\quad\text{ in }B_{1/2}\times(0,T) .
\end{equation}

\begin{lemma}{\rm \cite[Lemma 5.2]{kimzhang2024}}\lb{L.2.7}
Suppose $-\Delta u= f\geq 0$ in $\Omega_u$ with continuous $f:\bbR^{d}\to\bbR$. Let $v$ be given by \eqref{sup}, then $v(x,t) = u(y(x,t),t)$ for some $y(x,t) \in B_{\varphi(x,t)}(x)$. Then there are dimensional constants $A_0,A_1>1$ such that if $\varphi$ satisfies 
\[
\Delta\varphi\geq \frac{A_0|\nabla\varphi|^2}{|\varphi|} \quad \hbox{ in } B_1\times (0,T),
\]
then $v$ satisfies (in the viscosity sense)
\[
-\Delta v \leq (1+A_1 \|\nabla\varphi\|_\infty)f \circ y \quad \text{ in } \{v>0\} \cap [B_{1/2}\times (0,T)].
\] 
\end{lemma}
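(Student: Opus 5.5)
Since Lemma \ref{L.2.7} is the last result stated, the statement to be proved is Lemma \ref{L.2.7}, the sup-convolution lemma.

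The plan is Caffarelli's standard argument for sup-convolutions with variable radius. The idea is to show that at a positive point of $v$, the function $v$ is bounded below by a family of translated and dilated copies of $u$ whose Laplacians can be computed. Fix $(x_0,t_0)$ with $v(x_0,t_0)>0$. Let $y_0=y(x_0,t_0)\in \overline{B_{\varphi(x_0,t_0)}(x_0)}$ be a maximizer, and write $y_0=x_0+\varphi(x_0,t_0)\nu_0$ with $|\nu_0|\le 1$. Since the statement is purely spatial at each fixed time $t_0$, time plays no role and I suppress it.

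Suppose $P$ is a smooth function touching $v(\cdot,t_0)$ from above at $x_0$. Take a linear map $T(x)$, chosen as in Caffarelli, with
\[
T(x_0)=\nu_0,\qquad x+\varphi(x)T(x-x_0)\in B_{\varphi(x)}(x)
\]
for $x$ near $x_0$. A natural choice is $x\mapsto x+\varphi(x)\nu_0+\varphi(x_0)A(x-x_0)$, where $A$ is a skew or rotational correction. Define
\[
w(x):=u\bigl(x+\varphi(x)\nu_0+\text{(first-order correction)}\bigr).
\]
Then $w\le v$ near $x_0$ and $w(x_0)=v(x_0)$, so $P$ also touches $w$ from above at $x_0$. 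Pulling back through the diffeomorphism $\Phi(x)=x+\varphi(x)\nu_0+\dots$, the function $P\circ\Phi^{-1}$ touches $u$ from above at $y_0$, and in the viscosity sense
\[
-\Delta (P\circ\Phi^{-1})(y_0)\le f(y_0).
\]

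It remains to transfer this inequality back to $\Delta P(x_0)$. We have $D\Phi=I+\nu_0\otimes\nabla\varphi+O(\varphi\,\cdot)$. The second-order chain rule produces a quadratic-form term and a term $\nabla P\cdot D^2\Phi$. The quadratic-form term contributes the factor $(1+O(|\nabla\varphi|))$ multiplying $f$. The $D^2\Phi$ term involves $\nu_0\,\Delta\varphi$ paired with $\nabla P$.

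Here Caffarelli's device is used: choose the correction (the rotation $A$, or equivalently pick $\nu_0$ as a direction aligned with $\nabla P$) so that the bad terms are absorbed. The condition $\Delta\varphi\ge A_0|\nabla\varphi|^2/\varphi$ is exactly what makes the cross terms, which are of size $|\nabla P||\nabla\varphi|^2/\varphi$, dominated by the favorable term $|\nabla P|\Delta\varphi$. The favorable term enters with a sign that makes $-\Delta P$ smaller. This follows because, at a maximizer, $\nabla u(y_0)$ (and hence $\nabla P$) points along $\nu_0$ whenever $y_0$ lies on the sphere.

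If instead $y_0$ is an interior point of the ball, then $\nabla P(x_0)=0$ and $v$ is locally a sup over translations. In that case $-\Delta P\le f(y_0)$ follows directly.

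The main obstacle is this sign-bookkeeping step. One must show that $\nabla P\cdot \nu_0\,\Delta\varphi\ge 0$ has the right orientation and dominates the mixed terms via the $A_0$ condition, leaving exactly $(1+A_1\|\nabla\varphi\|_\infty) f(y_0)$. Constants are dimensional because only $d$-dependent matrix norms appear. Since $f\ge0$, the factor $(1+A_1\|\nabla\varphi\|)$ may be taken as an upper bound rather than an exact value, which concludes the proof.
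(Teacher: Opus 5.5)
The paper gives no proof of this lemma; it quotes it from \cite[Lemma 5.2]{kimzhang2024}, which is Caffarelli's variable-radius sup-convolution argument with the source term carried along. Your outline has that skeleton: the maximizer $y_0=x_0+\varphi(x_0)\nu_0$, a map $\Phi$ with $\Phi(x)\in\overline{B_{\varphi(x)}(x)}$, and pulling the test function back to $y_0$. Your interior-maximizer case, handled by translation, is fine. The genuine gap is the one nontrivial step, the construction of $\Phi$. It is left as ``chosen as in Caffarelli'', and what you say about it would not work. The real obstacle is not sign bookkeeping.

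(i) You claim the quadratic-form term only contributes a factor $1+O(|\nabla\varphi|)$ in front of $f$. This holds only if $D\Phi(x_0)=\lambda R$ with $R$ orthogonal. The viscosity inequality for $u$ controls $\mathrm{tr}\bigl(D^2P(x_0)\,BB^T\bigr)$ with $B=D\Phi(x_0)^{-1}$. For arbitrary $D^2P(x_0)$, this says something about $\Delta P(x_0)$ only when $BB^T$ is a multiple of the identity. For $\Phi(x)=x+\varphi(x)\nu_0$ one has $D\Phi=I+\nu_0\otimes\nabla\varphi$. The error then contains the mixed term $-2\nabla\varphi\cdot D^2P\,\nu_0$ (to leading order), which nothing controls.

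(ii) Your explicit candidate $x+\varphi(x)\nu_0+\varphi(x_0)A(x-x_0)$ with $A\neq0$ is not in $\overline{B_{\varphi(x)}(x)}$, so $w\le v$ fails. If $A^T\nu_0\neq0$, it leaves the ball at first order for half of the directions $h=x-x_0$. If $A^T\nu_0=0$, then $|\varphi(x)\nu_0+\varphi(x_0)Ah|^2=\varphi(x)^2+\varphi(x_0)^2|Ah|^2$, so it leaves at second order. A skew $A$ does not help either: $A^T\nu_0=0$ then forces $A\nu_0=0$, and such an $A$ cannot make $D\Phi(x_0)$ conformal unless $\nabla\varphi\parallel\nu_0$.

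(iii) $\nu_0$ is fixed by the maximizer, so you cannot ``pick $\nu_0$ aligned with $\nabla P$''. Also, $\nabla P(x_0)$ is a nonnegative multiple of $\nu_0+\nabla\varphi(x_0)$, not of $\nu_0$. The vector parallel to $\nu_0$ is $\nabla(P\circ\Phi^{-1})(y_0)$.

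The missing construction is as follows.
- Let $\lambda:=|\nu_0+\nabla\varphi(x_0)|$, and let $R$ be the rotation taking $(\nu_0+\nabla\varphi(x_0))/\lambda$ to $\nu_0$.
- Set $M:=\lambda R-I-\nu_0\otimes\nabla\varphi(x_0)$. Then $M^T\nu_0=0$ and $|M|\le C_d|\nabla\varphi|$ (for $|\nabla\varphi|\le\frac12$, say).
- Take $\Phi(x):=x+\varphi(x)\nu(x)$ with the \emph{unit} vector field $\nu:=N/|N|$, where $N(x):=\nu_0+M(x-x_0)/\varphi(x_0)$.

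Then $\Phi(x)\in\partial B_{\varphi(x)}(x)$ and $D\Phi(x_0)=\lambda R$. Differentiating $|\nu|^2\equiv1$ twice gives $\varphi\,\nu_0\cdot\Delta\nu=-|M|^2/\varphi$ at $x_0$. Hence
\[
\nu_0\cdot\Delta\Phi(x_0)=\Delta\varphi(x_0)-\frac{|M|^2}{\varphi(x_0)}\geq\Delta\varphi(x_0)-\frac{C_d|\nabla\varphi(x_0)|^2}{\varphi(x_0)}\geq0\quad\text{if } A_0\geq C_d .
\]
So the rotation needed for conformality is what \emph{creates} the $|\nabla\varphi|^2/\varphi$ term, through the renormalization that keeps $\Phi(x)$ in the ball. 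The hypothesis on $\varphi$ pays for that term; the rotation does not absorb cross terms.

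To conclude, let $\Psi:=\Phi^{-1}$. Conformality gives $\Delta\Psi(y_0)=-\lambda^{-2}D\Psi(y_0)\Delta\Phi(x_0)$. Moreover $\nabla(P\circ\Psi)(y_0)=\kappa\nu_0$ with $\kappa\geq0$, either from $\nabla u(y_0)$ or because $P\geq P(x_0)$ on $\{x:|x-y_0|\le\varphi(x)\}$. Therefore
$f(y_0)\geq-\Delta(P\circ\Psi)(y_0)=-\lambda^{-2}\Delta P(x_0)+\lambda^{-2}\kappa\,\nu_0\cdot\Delta\Phi(x_0)\geq-\lambda^{-2}\Delta P(x_0)$.
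This gives $-\Delta P(x_0)\le\lambda^2f(y_0)\le(1+3\|\nabla\varphi\|_\infty)f(y_0)$.
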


\begin{lemma}{\rm \cite[Lemma 5.3]{kimzhang2024}}\label{L.2.8}
Let $u,v$ be as given in Lemma ~\ref{L.2.7}, where $\varphi$ satisfies 
\[
\varphi \leq \eps_1,\quad |\nabla\varphi|\leq \eps_2,\quad -1/2\leq  \varphi_t\leq \eps_3.
\]
If $u_t\leq |\nabla u|^2-{b}\cdot\nabla u $ in the viscosity sense on  $\Gamma_u\cap B_{1}\times (0,T)$, and $\eps_1,\eps_2,|\eps_3|>0$ are small enough, then 
\[
v_t\leq (1+2\eps_2)^2|\nabla v|^2{+}{b}\cdot\nabla v+\left(\eps_1\|\nabla{b}\|_\infty+2\eps_2\|{b}\|_\infty+(\eps_3+2^{-1}|\eps_3|)\right)|\nabla v|
\]
in the viscosity sense on  $\Gamma_v\cap (B_{1/2}\times (0,T))$.

\end{lemma}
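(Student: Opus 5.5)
The plan is the classical Caffarelli argument: transfer a test function touching $v$ from above at a free boundary point to a test function touching $u$ from above at the point where the supremum is attained, then compare the free boundary conditions. Let $\phi\in C^{2,1}_{x,t}$ touch $v$ from above at $(x_0,t_0)\in\Gamma_v\cap(B_{1/2}\times(0,T))$ in $\overline{\Omega_v}\cap\{t\le t_0\}$, with the relevant strict subharmonicity condition $-(\Delta\phi+f)(x_0,t_0)>0$. We may assume $\nabla\phi(x_0,t_0)\neq0$; otherwise the claimed inequality is immediate from the definition.

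\textbf{Step 1: locate the contact point.} By Lemma~\ref{L.2.7} there is $y_0\in\overline{B_{\varphi(x_0,t_0)}(x_0)}$ with $v(x_0,t_0)=u(y_0,t_0)=0$. Since $(x_0,t_0)\in\Gamma_v$, the ball $B_{\varphi(x_0,t_0)}(x_0)$ lies in $\{u(\cdot,t_0)=0\}$ and touches $\Gamma_u(t_0)$ at a point $y_0$ with $|y_0-x_0|=\varphi(x_0,t_0)$. Set $\nu:=(y_0-x_0)/\varphi(x_0,t_0)$. We aim to show that $\nabla\phi(x_0,t_0)$ is parallel to $\nu$ (the outward direction toward the positive set). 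This follows by comparing $\phi$ along translates of the ball, using only $|\nabla\varphi|\le\eps_2$, up to a small tilt.

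\textbf{Step 2: the transferred test function.} For $(y,t)$ near $(y_0,t_0)$, define $x(y,t)$ by the implicit equation
\[
y=x+\varphi(x,t)\,\nu .
\]
Since $|\nabla\varphi|\le\eps_2$ is small, the implicit function theorem makes $x(\cdot,\cdot)$ a $C^{2,1}$ map with $x(y_0,t_0)=x_0$. Because $y\in \overline{B_{\varphi(x,t)}(x)}$, we have
\[
u(y,t)\le v(x(y,t),t)\le\phi(x(y,t),t)=:\psi(y,t),
\]
with equality at $(y_0,t_0)$. Hence $\psi$ touches $u$ from above at $(y_0,t_0)\in\Gamma_u$.

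Differentiating the implicit equation gives
\[
D_yx=(I+\nu\otimes\nabla\varphi)^{-1},\qquad \partial_t x=-\frac{\varphi_t}{1+\nu\cdot\nabla\varphi}\,\nu .
\]
From these we obtain
\[
|\nabla\psi|\le\frac{|\nabla\phi|}{1-\eps_2}\le(1+2\eps_2)|\nabla\phi|,\qquad |\nabla\psi-\nabla\phi|\le 2\eps_2|\nabla\phi|,
\]
and
\[
\psi_t=\phi_t-\frac{\varphi_t\,\nu\cdot\nabla\phi}{1+\nu\cdot\nabla\varphi}.
\]
The subharmonicity condition for $\psi$ at $(y_0,t_0)$, needed to invoke the subsolution property of $u$, is inherited from that of $\phi$ via the computation behind Lemma~\ref{L.2.7}. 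I would quote that computation rather than redo it.

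\textbf{Step 3: compare velocities.} The subsolution property of $u$ on $\Gamma_u$ gives
\[
\psi_t\le|\nabla\psi|^2+b(y_0)\cdot\nabla\psi,
\]
with the drift sign matched to the convention in the conclusion. Substituting Step 2 yields
\[
\phi_t\le(1+2\eps_2)^2|\nabla\phi|^2+b(y_0)\cdot\nabla\psi+\frac{\varphi_t\,\nu\cdot\nabla\phi}{1+\nu\cdot\nabla\varphi}.
\]
The remaining terms are estimated as follows.
\begin{itemize}
\item Since $|y_0-x_0|\le\eps_1$, we have $|b(y_0)-b(x_0)|\le\eps_1\|\nabla b\|_\infty$.
\item By Step 2, $|b\cdot(\nabla\psi-\nabla\phi)|\le2\eps_2\|b\|_\infty|\nabla\phi|$.
\item For the time term, $\nu\cdot\nabla\phi=|\nabla\phi|$ up to the alignment of Step 1, and $\varphi_t\le\eps_3$. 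Dividing by $1+\nu\cdot\nabla\varphi\in[1-\eps_2,1+\eps_2]$ and using $\varphi_t\ge-1/2$ to control the case of negative $\varphi_t$, this term is at most $(\eps_3+2^{-1}|\eps_3|)|\nabla\phi|$ once $\eps_2$ is small.
\end{itemize}
Replacing $\phi$ by $v$ at the touching point gives exactly the stated inequality.

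The main obstacle is Step 1 together with the time term: one must justify that $\nabla\phi(x_0,t_0)$ points along $\nu$ despite the variable radius, and bound the error from the tilt. I would first try to handle the tilt crudely, absorbing it into the $\eps_2$ terms, rather than proving exact alignment.
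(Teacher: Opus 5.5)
Your transport of the test function along $y=x+\varphi(x,t)\nu$ is the standard route for this lemma (the paper only quotes \cite[Lemma 5.3]{kimzhang2024}). Your formulas for $D_yx$, $\partial_tx$, $\psi_t$ and the drift bookkeeping are correct. The genuine gap is whether $\psi$ is an admissible test function for $u$. Definition~\ref{D.21} gives the free boundary inequality for $u$ only when $-(\Delta\psi+f)(y_0,t_0)>0$, which is a strict \emph{super}harmonicity condition, and this is not inherited from $\phi$. Indeed
\[
\Delta\psi(y_0,t_0)=\mathrm{tr}\big(D^2\phi\,D_yx\,D_yx^{T}\big)+\nabla\phi\cdot\Delta_yx,\qquad D_yx\,D_yx^{T}=I-\frac{\nu\nabla\varphi^{T}+\nabla\varphi\,\nu^{T}}{1+\nu\cdot\nabla\varphi}+O(\eps_2^2).
\]
So $\Delta\psi-\Delta\phi$ contains the term $-2\nabla\varphi^{T}D^2\phi\,\nu/(1+\nu\cdot\nabla\varphi)$, which has size $\eps_2|D^2\phi|$ and either sign, while the margin $-(\Delta\phi+f)(x_0,t_0)$ can be arbitrarily small. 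In addition, $f$ is now evaluated at $y_0\neq x_0$. Lemma~\ref{L.2.7} only concerns $v$ inside $\{v>0\}$, and even there the source is amplified to $(1+A_1\|\nabla\varphi\|_\infty)f\circ y$, so it cannot supply this.

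The repair is to modify the test function rather than try to inherit the condition. Since $\nabla\psi(y_0,t_0)=D_yx^{T}\nabla\phi\neq0$, set $\tilde\psi:=(1+\delta)\psi-M\psi^2$. Near $(y_0,t_0)$ on $\overline{\Omega_u}\cap\{t\le t_0\}$ you have $0\le u\le\psi\le\delta/M$, hence $\tilde\psi\ge\psi\ge u$ with equality at $(y_0,t_0)$. At that point $\nabla\tilde\psi=(1+\delta)\nabla\psi$, $\tilde\psi_t=(1+\delta)\psi_t$ and $\Delta\tilde\psi=(1+\delta)\Delta\psi-2M|\nabla\psi|^2$. So $\tilde\psi$ is admissible for $M$ large, and letting $\delta\to0$ gives $\psi_t\le|\nabla\psi|^2+b\cdot\nabla\psi$ at $(y_0,t_0)$.

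Two smaller points concern Steps 1 and 3.

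\textbf{The zero-gradient case.} The case $\nabla\phi(x_0,t_0)=0$ is not ``immediate from the definition'': the claim there is $\phi_t(x_0,t_0)\le0$, which has content (compare Step 2 of Lemma~\ref{L.5.2}). It is in fact vacuous, for a geometric reason that also settles your alignment question. Let $D:=\{x:\,|x-y_0|<\varphi(x,t_0)\}$.
\begin{itemize}
\item $D\subset\Omega_v(t_0)$, because each such ball contains $y_0\in\partial\Omega_u(t_0)$ in its interior.
\item Since $|\nabla\varphi|<1$, $\partial D$ is $C^2$ near $x_0$ with inner normal $n:=(\nu+\nabla\varphi)/|\nu+\nabla\varphi|$, evaluated at $(x_0,t_0)$.
\item $\phi(\cdot,t_0)\ge v(\cdot,t_0)\ge0$ on $D$, with $\phi(x_0,t_0)=0$.
\end{itemize}
To first order this forces $\nabla\phi(x_0,t_0)=\lambda n$ with $\lambda\ge0$. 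If $\lambda=0$, to second order it forces $e^{T}D^2\phi\,e\ge0$ whenever $e\cdot n\ge0$, hence $D^2\phi\ge0$. This contradicts $\Delta\phi<-f\le0$. The same observation applied at $x(y,t)$ is also what justifies $u\le\psi$ on $\overline{\Omega_u}$ in your Step 2: you need $(x(y,t),t)\in\overline{\Omega_v}$ before using $\phi\ge v$, and you did not check this.

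\textbf{Alignment and the time term.} The argument above shows $\nabla\phi$ is parallel to $\nu+\nabla\varphi$, not to $\nu$, so the stated aim of Step 1 is false in general. No crude absorption of the tilt is needed, however. Substituting the exact direction gives $\nabla\psi(y_0,t_0)=|\nabla\phi|\,\nu/|\nu+\nabla\varphi|$. The time correction is then exactly $\varphi_t|\nabla\phi|/|\nu+\nabla\varphi|$, which is at most $(\eps_3+\tfrac12|\eps_3|)|\nabla\phi|$ for $\eps_2\le\tfrac13$. This does not use $\varphi_t\ge-\tfrac12$.
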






\section{Interior gain of monotonicity}\lb{S3}

\subsection{Estimates on time derivative}

Since any global solution is non-decreasing along streamlines, it is generally true that $\partial_t u$ is bounded from below \cite{kimzhang2024}. The goal of this section is to obtain an upper bound for $\partial_t u$. The proof is a combination of Carleson-type estimate, an gradient estimate for harmonic functions and \cite[Proposition 4.1]{kimzhang2024}. The proposition describes that over a Lipschitz domain, one can use positive Harmonic functions to bound superharmonic functions uniformly up to the free boundary. The first two ingredients are given in \cite{CJK}. 

The solution to the classical Hele-Shaw problem is needed. The following Lemma concerns $u^h$ which solves
\begin{equation}\lb{HS}
    \left\{\begin{aligned}
        -\Delta u^h &=0 \quad &&\text{ in }\{u^h>0\}\cap\calQ_2,\\
        u^h_t&=|\nabla u^h|^2\quad &&\text{ on }\partial\{u^h>0\}\cap\calQ_2.
    \end{aligned}\right.
\end{equation}

\begin{lemma}\lb{L.7.1}
Suppose that $u^h$ is a subsolution to \eqref{HS} in $\calQ_2$, and $\Gamma_{u^h}(t) \cap B_1(0)$ can be represented by $x_n=g_t(x')$ for some Lipschitz function $g_t:\bbR^{d-1}\to\bbR$ with Lipschitz constant less than a dimensional constant $c_g$. Then there are $\delta_0,  C_0>0$ depending only on $d$ and $c_g$ such that the following holds for $0<\delta < \delta_0$:  for any $x_0\in\Gamma_{u^h}(0)\cap B_1$ and $x_1\in \Omega_{u^h}(0)\cap B_1$ such that 
\[
\tfrac{\delta}{2}\leq |x_1-x_0|,\quad d(x_1,\Gamma_{u^h}(0))\leq \delta,
\]
we have
\[
\sup_{x\in B_{\delta}(x_0)}{u^h}(x,T_0)\leq C_0 a(T_1) {u^h}(x_1,0), \quad \hbox{where } a(t):=\dfrac{{u^h}(-e_n,t)}{{u^h}(-e_n,0)}.
\]
where $T_1\in [0,T_0]$ and $T_0$ is the first time ${u^h}$ is positive in $B_{\delta}(x_0)$, namely 
\[
T_0:=\sup\{t>0\,|\, \Gamma_{u^h}(t)\cap B_{\delta}(x_0)\neq \emptyset\}.
\]
\end{lemma}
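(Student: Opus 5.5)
This lemma is the Hele-Shaw analogue of \cite[Lemma 2.5]{CJK}. I would prove it with the same three ingredients: a Carleson estimate, Harnack/boundary-Harnack comparability in the Lipschitz domain, and a time-monotonicity argument that controls how much $u^h$ can grow before the free boundary reaches $B_\delta(x_0)$.

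\emph{Step 1: normalization in space.} At a fixed time $t$, $u^h(\cdot,t)$ is a non-negative harmonic function in $\Omega_{u^h}(t)\cap B_2$. It vanishes on the Lipschitz graph $x_n=g_t(x')$, whose constant is at most $c_g$. Dahlberg's lemma, applied in the strip $\Sigma'_L$ under $g_t$, compares $u^h(\cdot,t)$ with the harmonic measure $w_{1,L}$ of the bottom boundary. This gives $u^h(x,t)\asymp u^h(-e_n,t)\,w_{1,L}(x)$ for $x$ near the free boundary in $B_1$, with constants depending only on $d$ and $c_g$.

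Combining this comparison with the Carleson estimate, we get, for any $x_0\in\Gamma_{u^h}(t)$ and any $x_1$ at distance comparable to $\delta$ from $x_0$ and within $\delta$ of $\Gamma_{u^h}(t)$,
\[
\sup_{B_{2\delta}(x_0)} u^h(\cdot,t)\leq C\, u^h(-e_n,t)\,\omega_t(\delta),\qquad u^h(x_1,t)\geq C^{-1}u^h(-e_n,t)\,\omega_t(\delta),
\]
where $\omega_t(\delta)$ is the size of the normalized harmonic function at scale $\delta$. Here we need a Harnack chain of bounded length from $x_1$ to the non-tangential point $x_0+\delta e'$ above $x_0$. Such a chain exists because $|x_1-x_0|\geq\delta/2$, $d(x_1,\Gamma)\leq\delta$ and the domain is Lipschitz.

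\emph{Step 2: time propagation.} The difficulty is that the bound is needed at time $T_0$, while $x_0$ sits on the free boundary at time $0$. Since $u^h$ is a subsolution whose positive set can only expand, $\Omega_{u^h}(0)\subset\Omega_{u^h}(t)$, though $u^h$ itself need not be monotone in $t$. The plan is to compare $u^h(\cdot,T_0)$ on $B_\delta(x_0)$ with the harmonic function in $\Omega_{u^h}(0)$ having boundary data $u^h(-e_n,T_0)$ far away.

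Since $\Gamma_{u^h}(T_0)$ still touches $\overline{B_\delta(x_0)}$, the domain $\Omega_{u^h}(T_0)\cap B_1$ lies within distance $O(\delta)$ of $\Omega_{u^h}(0)$ near $x_0$, by the Lipschitz graph property. Applying the Carleson estimate at time $T_0$ gives
\[
\sup_{B_\delta(x_0)}u^h(\cdot,T_0)\leq C\,u^h(y,T_0)
\]
for a point $y$ at distance about $2\delta$ above $x_0$. That point is at distance about $\delta$ from both $\Gamma_{u^h}(0)$ and $\Gamma_{u^h}(T_0)$.

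It remains to compare $u^h(y,T_0)$ with $u^h(y,0)$. I would split this into two estimates:
\[
u^h(y,T_0)\leq C\,\frac{u^h(-e_n,T_1)}{u^h(-e_n,0)}\,u^h(y,0),\qquad u^h(y,0)\leq C\,u^h(x_1,0).
\]
The second is Harnack together with Step 1. For the first, I would pick $T_1\in[0,T_0]$ to be a time at which the ratio $u^h(-e_n,t)/u^h(-e_n,0)$ is maximal. Then, by the maximum principle on $(\Omega_{u^h}(t)\cap B_1)\times[0,T_0]$, the function $u^h(\cdot,t)$ is dominated by $a(T_1)$ times the boundary-Harnack profile built from time $0$.

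\emph{Main obstacle.} The hardest step is the comparison between the moving domains $\Omega_{u^h}(t)$ and $\Omega_{u^h}(0)$ at scale $\delta$ up to time $T_0$. Boundary Harnack cannot be applied directly across two different domains. I would first try to use the Lipschitz-graph structure and the inclusion $\Omega_{u^h}(0)\subset\Omega_{u^h}(t)\subset \Omega_{u^h}(0)+B_{C\delta}$, valid near $x_0$ for $t\leq T_0$. The last inclusion is exactly where the definition of $T_0$ is used.

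With these inclusions, I would dominate $u^h(\cdot,t)$ by the harmonic function in the shifted domain $\Omega_{u^h}(0)+C\delta e_n$, which is still a Lipschitz graph domain. A translated Dahlberg comparison then shows that at the point $y$ this harmonic function is comparable to $u^h(y,0)$, with constants depending only on $d$ and $c_g$. Combining the displays yields the claim with $C_0=C(d,c_g)$, provided $\delta<\delta_0$ so that all balls stay inside $B_1$ where the graph representation holds.
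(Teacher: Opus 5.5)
The genuine gap is in Step~2. Your spatial reductions are fine: Step~1 (with the hypothesis read correctly, see the end) and the Carleson step at time $T_0$. But the estimate $u^h(y,T_0)\le C\,a(T_1)\,u^h(y,0)$, which is the whole content of the lemma, is not established, and the route you propose cannot establish it.

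\textbf{Why the shifted-domain comparison fails.} Your argument is purely elliptic. The free boundary inequality is never used; you use only the Lipschitz geometry and the fact that $\Gamma_{u^h}(t)$ still meets $\overline{B_\delta(x_0)}$ for $t\le T_0$.

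That information gives $\Omega_{u^h}(t)\subset\Omega_{u^h}(0)+B_{C\delta}$ only at scale $\delta$ around $x_0$. At horizontal distance $\rho$ from $x_0$ the two graphs can differ by about $c_g\rho$. So on $B_1$ you only know $\Omega_{u^h}(t)\subset\{x_n<g_0(x')+2\delta+2c_g|x'-x_0'|\}$.

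Dominating $u^h(\cdot,t)$ by the harmonic function of $\Omega_{u^h}(0)+C\delta e_n$ needs the inclusion on the whole region where boundary data are compared, since that function vanishes wherever $\Omega_{u^h}(t)$ sticks out. Localizing to $B_{R\delta}(x_0)$ just moves the same problem to $\partial B_{R\delta}(x_0)$, and iterating over dyadic scales costs a factor $C^{\log(1/\delta)}$.

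\textbf{A counterexample to the domain-only argument.} Let $u(\cdot,t)$ be harmonic in $B_2\cap\{x_n<g_t(x')\}$ with fixed data on $\partial B_2$. Let $g_t$ increase from $0$ to a scale-$\delta$ smoothing of $\tfrac{\delta}{2}+c|x'|$, with $0<c<c_g$ and the perturbation cut off near $\partial B_2$; after that, let the front cross $B_\delta(0)$. Then $1\le a\le C$. With $x_0=0$ and $x_1=-\tfrac{3}{4}\delta e_n$ one has $u(x_1,0)\sim\delta$. However, $\sup_{B_\delta(0)}u(\cdot,T_0)\gtrsim\delta^{\beta}$, where $\beta=\beta(c)<1$ is the homogeneity of the positive harmonic function in $\{x_n<c|x'|\}$. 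So no comparison of domains can give a constant independent of $\delta$.

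\textbf{What is missing.} The missing ingredient is dynamical. From the free boundary law you must show two things up to time $T_0$. First, the front cannot run ahead at the larger scales around $x_0$: roughly, its displacement at scale $\rho=2^k\delta$ must be $\epsilon_k\rho$ with $\sum_k\epsilon_k<\infty$. Second, near $x_0$ it cannot lag, so that a large growth of $u^h$ near $x_0$ would make the front sweep $B_\delta(x_0)$ and end the interval $[0,T_0]$.

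The paper does not prove this itself. It takes it from the proof of \cite[Theorem 2.1]{CJK}, localized, with $a$ recording the change of $u^h(-e_n,\cdot)$ as in \cite[Lemma 2.6]{CJK}.

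Both speed bounds matter. Run slowly, the family above has normal velocity $V\le|\nabla u|$ and is a viscosity subsolution. So the subsolution inequality alone cannot yield the bound. \cite{CJK} treats solutions, and in the paper's application $2C_1w_1$ shares its free boundary with the supersolution $w_1$. For the same reason, a subsolution's positive set need not expand, contrary to what you assert in Step~2.

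\textbf{Reading of the hypothesis.} The bounded Harnack chain in Step~1 requires $d(x_1,\Gamma_{u^h}(0))\ge\delta/2$ and $|x_1-x_0|\le\delta$. The hypothesis must be read as both quantities lying in $[\delta/2,\delta]$. Under your reading, $u^h(x_1,0)$ can be arbitrarily small.
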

The result follows directly from the proof of Theorem 2.1 in \cite{CJK}. The conclusions stated are slightly more general:  we state the result in a local version and it is for sub- and super- solutions  to \eqref{HS} instead of solutions. The constant $a(T_0)$ comes from \cite[Lemma 2.6]{CJK}. Following from it,  we have a corollary similarly as \cite[Corollary 2.2]{CJK}. 

\begin{corollary}\lb{C.7.2}
Under the assumptions of Lemma \ref{L.7.1} and assuming in addition that $C^{-1}\leq a(\cdot)\leq C$ for some $C\geq 1$, if $x_2\in \Omega_{u^h}(0)^c$ satisfies
\[
\tfrac{\delta}{2} \leq |x_2-x_0|,\quad d(x_2,\Gamma_{u^h}(0))\leq \delta,
\]
then 
\beq\lb{7.2}
\frac{\delta^2}{{u^h}(x_1,0)}\leq C_0 \, t(x_2) ,\quad \hbox{ where }t(x):= \sup\{t\geq0\,|\, {u^h}(x,t)=0\},
\eeq
and $C_0>0$ depends on $C$.
If ${u^h} $ is a supersolution, then \eqref{7.2} hold with $\leq $ replaced by $\geq$.
\end{corollary}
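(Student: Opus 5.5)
We follow the argument of \cite[Corollary 2.2]{CJK}: apply Lemma \ref{L.7.1} at the time when the free boundary reaches $x_2$, and compare the growth of $u^h$ there with the Hele-Shaw speed law $u^h_t=|\nabla u^h|^2$. Fix $x_0,x_1,x_2$ as in the statement and set $s:=t(x_2)$, the last time $u^h(x_2,\cdot)$ vanishes. Since $|x_2-x_0|\le d(x_2,\Gamma_{u^h}(0))+\delta\lesssim\delta$, after enlarging $\delta$ by a fixed dimensional factor (which only affects $C_0$) we may assume $x_2\in B_\delta(x_0)$. Then $T_0\ge s$. So $B_\delta(x_0)$ is not entirely positive before time $s$.

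For the subsolution case, apply Lemma \ref{L.7.1} together with the bound $a\le C$. This gives
\[
\sup_{x\in B_\delta(x_0)}u^h(x,t)\le C_0C\,u^h(x_1,0)=:M\quad\text{for } t\in[0,T_0],
\]
in particular for $t\le s$. Here we use that the lemma's argument gives the bound at every time up to $T_0$; if needed, we apply it with $T_0$ replaced by $t$, using monotonicity in time of the positive set.

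Next we build a barrier. On $[0,s]$, $u^h$ is a subsolution bounded by $M$ in $B_\delta(x_0)$, with small-Lipschitz free boundary. We compare it, in the region $B_\delta(x_0)\times[0,s]$, with a radial supersolution of \eqref{HS}. A concrete choice is a traveling-front profile
\[
\phi(x,t)=\frac{CM}{\delta}\bigl(R(t)-|x-z|\bigr)_+ ,
\]
or a harmonic fundamental-solution profile, whose zero level set moves with normal speed $\lesssim |\nabla\phi|\sim M/\delta$. Choose $z$ and the initial radius so that $\phi\ge u^h$ on the parabolic boundary and the initial positive set of $\phi$ contains $\Omega_{u^h}(0)\cap B_\delta(x_0)$, but stays at distance $\gtrsim\delta$ from $x_2$. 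The latter is possible because $x_2$ lies outside $\Omega_{u^h}(0)$ at distance comparable to $\delta$ from $x_0$, in the direction forced by the Lipschitz cone, i.e.\ inside the cone below the graph of $g_0$ after shifting. By comparison, the positive set of $u^h$ reaches $x_2$ no earlier than $\phi$ does. That time is at least $\delta/(M/\delta)=\delta^2/M$, so
\[
s\ge c\,\frac{\delta^2}{u^h(x_1,0)},
\]
which is \eqref{7.2}.

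For the supersolution case, we use the reverse inequality of Lemma \ref{L.7.1} (its supersolution version) together with $a\ge C^{-1}$. This gives $u^h\ge c\,u^h(x_1,0)$ at comparable interior points of the strip up to time $T_0$. Combined with the Carleson/Harnack estimates in Lipschitz domains, it yields $|\nabla u^h|\gtrsim u^h(x_1,0)/\delta$ near the boundary. A subsolution barrier, namely an expanding ball whose positive set moves with speed $\gtrsim u^h(x_1,0)/\delta$, then forces the front to cover the distance $\lesssim\delta$ to $x_2$ within time $C\delta^2/u^h(x_1,0)$.

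The main obstacle is making the barrier fit the geometry. We must place it so that it dominates (respectively, is dominated by) $u^h$ on the lateral boundary, using only the sup bound on $B_\delta(x_0)$ (respectively, an interior lower bound), while $x_2$ stays at distance of order $\delta$ from its initial support. This is where the small Lipschitz constant $c_g$ of $g_t$ enters, through the cone condition at $x_2$.
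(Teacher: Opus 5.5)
Your overall route is the one the paper intends, since the paper simply defers to \cite[Corollary 2.2]{CJK}. First bound $u^h$ by $M:=C_0C\,u^h(x_1,0)$ near $x_0$ via Lemma~\ref{L.7.1} and $a\le C$, then compare with an explicit Hele-Shaw barrier whose front moves at speed $\sim M/\delta$; for supersolutions, use an interior lower bound and an expanding subsolution. The gap is in the step that carries the content: where the barrier is placed in the subsolution case. Take $\phi=\frac{CM}{\delta}(R(t)-|x-z|)_+$ compared on $B_\delta(x_0)$. The complement of $B_{R(t)}(z)$ is connected, unbounded and contains $x_2\in B_\delta(x_0)$, so it meets $\partial B_\delta(x_0)$ in a whole cap on the side of $x_2$. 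There $\phi=0$, so the comparison principle requires $u^h=0$ on that cap for all $t\le c\delta^2/M$. That is a statement of exactly the kind you are proving. It is not arranged by choosing $z$ and $R(0)$, and the cone condition at $x_2$ only gives information at $t=0$.

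The fix is to reverse the geometry and use a radial solution of \eqref{HS} whose zero set is a shrinking ball around $x_2$. For $d\ge3$ take
\[
\phi(x,t)=A\bigl(\rho(t)^{2-d}-|x-x_2|^{2-d}\bigr)_+,\qquad \rho(t)^d=\rho_0^d-d(d-2)A\,t,\qquad \rho_0=\tfrac{\delta}{4},\qquad A=\frac{M\rho_0^{d-2}}{1-2^{2-d}}.
\]
For $d=2$ take $\phi=A\bigl(\log(|x-x_2|/\rho(t))\bigr)_+$ with $\rho(t)^2=\rho_0^2-2At$ and $A=M/\log 2$. The front moves exactly at speed $|\nabla\phi|$, and $\phi\ge M$ on $\partial B_{\delta/2}(x_2)$ while $\rho\le\rho_0$. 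Comparison on $B_{\delta/2}(x_2)$ then needs only two things: $u^h(\cdot,0)=0$ in $B_{\delta/2}(x_2)$, and $u^h\le M$ on that sphere. The latter is what your use of Lemma~\ref{L.7.1} provides, on a ball of radius comparable to $\delta$ about $x_0$ and for times up to its $T_0$; a short continuity argument handles the possibility $T_0<t_c$. The hole closes at $t_c=c_d\,\delta^2/M$, so $t(x_2)\ge t_c$, and nothing about $\Gamma_{u^h}(t)$ for $t>0$ is used.

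Two smaller points. First, the hypotheses as printed ($|x_2-x_0|\ge\delta/2$, $d(x_2,\Gamma_{u^h}(0))\le\delta$) do not give what you use. Your inequality $|x_2-x_0|\le d(x_2,\Gamma_{u^h}(0))+\delta$ does not follow, and the conclusion fails for $x_2$ very close to $\Gamma_{u^h}(0)$, already for a planar traveling wave. The intended configuration is that of \cite{CJK}: $x_1,x_2$ at distance $\delta$ from $x_0$ along the cone axis, on opposite sides of the free boundary. So you need $|x_2-x_0|\lesssim\delta$ and $d(x_2,\Gamma_{u^h}(0))\gtrsim\delta$, and similarly for $x_1$ in Lemma~\ref{L.7.1}; state this as an assumption instead of deriving it.

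Second, in the supersolution case, the lower bound $u^h\ge c\,u^h(x_1,0)$ near an interior point comparable to $x_1$, for $t\lesssim\delta^2/u^h(x_1,0)$, does not come from a supersolution version of Lemma~\ref{L.7.1}; none is stated. It comes from the positive set not shrinking in time, together with $a\ge C^{-1}$ and Harnack and boundary Harnack estimates. You also need non-shrinking to control $t(x_2)$, which is a last zero time. The pointwise bound $|\nabla u^h|\gtrsim u^h(x_1,0)/\delta$ at a Lipschitz free boundary is not available, and it is not needed. The expanding annular fundamental-solution subsolution, anchored on a small sphere around that interior point, already reaches $x_2$ by time $C\delta^2/u^h(x_1,0)$.
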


The next lemma is critical, as it proves an upper bound for $\partial_t u$. The key idea is to construct certain sub- and super- solutions to \eqref{HS} so that we can adapt the proof of \cite[Lemma 8.3]{CJK}.

\begin{lemma}{\rm [Time derivative estimate]}\lb{L.7.3}
Suppose that $u$ is non-decreasing along all directions in $W_{\theta,-e_d}$ with $\theta\geq \frac{\pi}{4}$ in $\calQ_2$, the non-degeneracy property \eqref{nondeg} holds, and there exist $C>1$ and $c\in(0,1)$ such that
\beq\lb{bcond}
| \partial_t u(-e_d,t)|\leq C,\quad  u(-e_d,t)\in [C^{-1},C] \quad \forall t\in (-2c,2c).
\eeq
Then there exist $C'>0$ and $\delta\in (0,c)$ such that for all $(x_0,t_0)\in \Gamma_u\cap \calQ_{1,c}$ and $(x,t)\in \Omega_u\cap \calQ_\delta(x_0,t_0)$,
\[
\partial_{t+}u(x,t):=\limsup_{\eps\to 0^+}\tfrac{1}{\eps}(u(x,t+\eps)-u(x,t))\leq C'(1+|\nabla u(x,t)|^2).
\]
\end{lemma}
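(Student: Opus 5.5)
We will adapt the proof of \cite[Lemma 8.3]{CJK}. The idea is to compare $u$ with sub- and supersolutions of the classical problem \eqref{HS}, so that Lemma \ref{L.7.1} and Corollary \ref{C.7.2} apply.

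\textbf{Step 1 (reduction to a free boundary speed bound).} Fix $(x,t)\in\Omega_u\cap\calQ_\delta(x_0,t_0)$ and set $r:=d(x,\Gamma_u(t))$. First suppose $x$ is far from the free boundary, say $r\gtrsim\delta$. There the interior equation $-\Delta u=f$, the monotonicity in $W_{\theta,-e_d}$ and \eqref{bcond} allow $\partial_t u$ to be controlled by a Harnack/Carleson argument. We write $u=h+w$, where $h$ is harmonic with the boundary data of $u$ and $w$ solves $-\Delta w=f$ with zero data. Then $\partial_t u$ is governed by how the domain and the boundary data at $-e_d$ move, and \eqref{bcond} bounds the latter.

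In the main case $r\ll\delta$, we rescale by $r$ and argue as in \cite{CJK}. By the Carleson estimate and Lemma \ref{l.2.2}, $u(x,t)\approx r|\nabla u(x,t)|$. It then suffices to show that $\Gamma_u$ cannot advance faster than $C(1+|\nabla u|)$ near $x$ on the time scale $r/(1+|\nabla u|)$. Given that, $u(x,t+\eps)-u(x,t)$ is bounded by comparing with translates in the monotone direction $-e_d$. This uses the Lipschitz graph structure that follows from the cone monotonicity with $\theta\ge\pi/4$ (Lipschitz constant $\le1$, which is below the threshold of Proposition \ref{L.2.61} once the constants are arranged).

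\textbf{Step 2 (construction of a supersolution to \eqref{HS}).} We remove the source and the drift. Locally we take $w_2$ solving $-\Delta w_2=\|f\|_\infty$ in the strip below $\Gamma_u(s)$ with zero data on the graph, as in Proposition \ref{L.2.61}. That proposition gives $w_2\le C w_1$ with $w_1$ harmonic, so $u$ is comparable to a harmonic function $\tilde u$ with the same positivity set up to factors $1\pm C\delta^2$. To absorb the drift $b\cdot\nabla u$ in the free boundary condition, we use a Galilean shift along the streamlines $X(t)$ of \eqref{ode} and a time rescaling $t\mapsto(1+C\delta)t$, together with the bound $|b|\le C$. The result is that $\bar u(x,s):=(1+C\delta)\tilde u(x+X(s)-X(t),\,t+\lambda s)$ is a supersolution of \eqref{HS} with Lipschitz free boundary. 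Where $|\nabla u|$ is small compared with $|b|$, the drift term dominates, and this is exactly the source of the ``$1+$'' in $1+|\nabla u|^2$. A subsolution is built similarly.

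\textbf{Step 3 (applying the corollary).} We apply Corollary \ref{C.7.2} to $\bar u$. The hypothesis $C^{-1}\le a(\cdot)\le C$ follows from \eqref{bcond}, and the non-degeneracy \eqref{nondeg} ensures $u(x_1,0)\gtrsim\delta$. The corollary gives a lower bound $t(x_2)\gtrsim \delta^2/\bar u(x_1,0)$ on the arrival time of the free boundary at distance $\delta$. This translates into the speed bound $C(1+|\nabla u|)$ required in Step 1. Combining this with Lemma \ref{L.7.1} to bound $\sup u$ at later times, we obtain $\partial_{t+}u\le C'(1+|\nabla u|^2)$.

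\textbf{Main obstacle.} I expect the hardest part to be Step 2: producing genuine sub- and supersolutions of \eqref{HS} from $u$ uniformly up to the free boundary. Two issues must be handled at once: the drift perturbs the normal velocity by $O(|\nabla u|)$, and the source perturbs $u$ by $O(r^2)$. Both must be compared against $|\nabla u|^2$ and $u$ using only Proposition \ref{L.2.61}, without a uniform gradient bound. This forces the case split according to whether $|\nabla u|\gtrsim1$, and the constants $C'$ and $\delta$ come from that balance.
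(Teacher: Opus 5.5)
Your overall framework matches the paper's: shift along the streamline, replace $u$ by the harmonic $w_1$ with the same zero set via Proposition \ref{L.2.61}, absorb the small drift using non-degeneracy, then invoke Lemma \ref{L.7.1} and Corollary \ref{C.7.2}. \textbf{The gap is the step that turns these into a pointwise bound on $\partial_{t+}u$.} Your Step 1 reduction bounds the speed of $\Gamma_u$ near $x$ by $C(1+|\nabla u(x,t)|)$ and compares $u(\cdot,t+\eps)$ with a translate $u(\cdot-se_d,t)$; this cannot work. A bound on the single time scale $r/(1+|\nabla u|)$ only controls a finite difference quotient. For the $\limsup_{\eps\to0}$ you would need $\Omega_u(t+\eps)\subset\Omega_u(t)+C(1+|\nabla u(x,t)|)\eps e_d$ on a whole boundary patch for all small $\eps$. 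But Corollary \ref{C.7.2} at scale $\delta$ only bounds the local speed by $u(x_1^\delta)/\delta\asymp|\nabla u(x_1^\delta)|$ at points at distance $\delta$ from $\Gamma_u$. On a Lipschitz domain these are not controlled by $|\nabla u(x)|$: at a reentrant corner $|\nabla u|$ blows up at the boundary, and the boundary moves by $\gg\eps$ in time $\eps$, while $\partial_t u(x)$ stays finite.

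The missing idea is the averaging argument of \cite[Lemma 8.3]{CJK}, which the paper reproduces. Choose $\tau_n\to0$ and matching $\delta_n\to0$. Bound $p_n=(\bar u(\cdot,\tau_n)-\bar u(\cdot,0))/\tau_n$ on the layer $\{d(\cdot,\bar\Gamma(0))\in[\delta_n,2\delta_n]\}$ by $Cw_1^2/d^2\asymp C|\nabla w_1|^2$, using Lemma \ref{L.7.1}, Corollary \ref{C.7.2} and \cite[Lemma 4]{Caf87}. Dominate this by $C(U+\ell(\delta_n))$, where $U$ is the harmonic extension of the boundary values of $|\nabla w_1|^2$ normalized by $U(-e_d)=1$. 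Use $|p_n(-e_d,0)|\le C$ from \eqref{bcond}, the maximum principle and Dahlberg's lemma to push $p_n\le C(U+\ell(\delta_n))$ into the interior. Finally use the reverse-H\"older-type bound $U\le C(1+|\nabla w_1|^2)$ from \cite[Lemma 8.2(e)]{CJK}. This harmonic-measure averaging of $|\nabla w_1|^2$ is the heart of the proof and has no counterpart in your plan. Your ``far from the boundary'' case needs it too, since there $\partial_t u$ is also driven by the motion of the domain. Also, the ``$1+$'' comes from $U(-e_d)=1$, $|p_n(-e_d,0)|\le C$ and undoing the shift, not from the drift dominating $|\nabla u|$.

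\textbf{There is also a direction error.} Lemma \ref{L.7.1} and the arrival-time lower bound $\delta^2/u^h(x_1,0)\le C_0\,t(x_2)$ in Corollary \ref{C.7.2} hold for \emph{subsolutions}; for supersolutions the inequality reverses. Applying them to your supersolution $\bar u$ gives the wrong bound. You need a subsolution with the same zero set as $u$ that dominates it, like the paper's $2C_1w_1\ge 2\bar u$. For it, a test function touching from above has $|\nabla\phi|\ge 2c$ by non-degeneracy, so the drift is absorbed. Your time-rescaling device also does not yield a supersolution. Test functions touching from below may have arbitrarily small gradient, and then $|\bar b||\nabla\phi|$ is not absorbed by $C\delta|\nabla\phi|^2$. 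The paper instead adds an extra vertical shift $Lrte_d$ and uses the cone monotonicity to control the sign of $\bar b\cdot\nabla\phi$.
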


\begin{proof}
For simplicity of notations, let us assume that $(x_0,t_0)=(0,0)$. Recall \eqref{ode}. For $r\in (0,1)$ and $L:=\max\{1,2\|\nabla{b}\|_\infty\}$, let 
\[
\bar{u}(x,t):=u(x+X(t)+C_1rt e_d, t )
\]
which then satisfies 
\begin{equation}\lb{7.6}
    \left\{\begin{aligned}
        -\Delta \bar u &=\bar f (x,t)\quad  &&\text{ in }\{\bar u>0\},\\
        \bar u_t&=|\nabla \bar u|^2+\bar{b}(x,t)\cdot\nabla \bar u\quad  &&\text{ on }\partial\{\bar u>0\},
    \end{aligned}\right.
\end{equation}
where 
\beq\lb{7.13}
\bar f (x,t):=f(x+X(t)+ Lrte_d),\quad \bar{b}(x,t):={b}(x+X(t)+Lrte_d)-{b}(X(t))+Lre_d.
\eeq
We also denote 
\[
\bar{\Omega}:=\Omega_{\bar{u}}\cap \calQ_1,\quad \bar{\Omega}(t):=\Omega_{\bar{u}}(t)\cap B_1\quad\text{and}\quad \bar{\Gamma}(t)={\Gamma}_{\bar{u}}(t)\cap B_1.
\]

\noindent {\bf Step 1.} Construct sub- and super- solutions.

For each $t$, let $w_1(\cdot,t)$ be the unique non-negative harmonic function in $\bar{\Omega}(t)$ such that $w_1(\cdot,t)=0$ on $\bar{\Gamma}(t)$ and $w_1(\cdot,t)=\bar{u}(\cdot,t)$ on $\partial\bar{\Omega}\backslash \bar{\Gamma}(t)$. 
It follows from Lemma 11.12 \cite{cbook} that $\nabla_{-e_d} w_1(\cdot,t)\geq 0$ in $B_r$ for all $r,|t|$ sufficiently small.

Next, let $w_2:=\bar{u}-w_1$. It follows from Proposition \ref{L.2.61}  that there exists $C_1>1$ such that $w_2\leq (C_1-1)w_1$ and so
\beq\lb{7.3}
w_1\leq \bar{u}\leq C_1w_1.
\eeq
Moreover, this and the non-degeneracy condition yield that there exists $c>0$ such that for any $\eps>0$ sufficiently small, 
\beq\lb{7.4}
\bar{u}(x-\eps e_d,t)\geq c\eps \quad\text{ for all $x,t$ such that }x\in\bar{\Gamma}(t).
\eeq

First, we claim that $2C_1{w}_1$ is a subsolution to \eqref{HS} in $\calQ_r$ if $r>0$ is sufficiently small. Since $w_1$ is harmonic in its support, it suffices to verify the free boundary condition. Suppose $(x_1,t_1)$ is on the free boundary and $\phi\in C^{2,1}_{x,t}$ is such that $2C_1w_1-\phi$ has a local maximum at $(x_1,t_1)$ for $t\leq t_1$ and in $\{w_1>0\}$. 
Then, \eqref{7.3} imply that $2\bar{u}-\phi$ also obtains a local maximum at $(x_1,t_1)$, and therefore \eqref{7.6} yields
\beq\lb{7.5}
2\phi_t(x_1,t_1)\leq |\nabla\phi(x_1,t_1)|^2+2\bar{b}(x_1,t_1)\cdot\nabla\phi(x_1,t_1).
\eeq
Moreover, since $x_1\in\bar \Gamma(t_1)$, it follows from \eqref{7.3} and \eqref{7.4} that 
\[
\phi(x_1-\eps e_d,t_1)-\phi(x_1,t_1)\geq 2\bar u(x_1-\eps e_d,t_1)\geq 2c\eps
\]
for all $\eps>0$ sufficiently small. Thus, $|\nabla\phi(x_1,t_1)|\geq 2c$ and, by taking $r$ to be small enough,  we get from \eqref{7.13} that
\[
2|\bar b (x,t)|\leq 2(\|\nabla{b}\|_\infty(r+Lr^2) +Lr)\leq 2c\leq |\nabla \phi(x_1,t_1)|\quad\text{in }\calQ_r.
\]
Thus, \eqref{7.5} shows
\[
\phi_t(x_1,t_1)\leq |\nabla \phi(x_1,t_1)|^2
\]
which implies that $2C_1w_1$ is a subsolution to \eqref{HS}.

Second, we claim that  ${w}_1$ is a supersolution to \eqref{HS}. Indeed, let
$\phi\in C^{2,1}_{x,t}$ be such that $w_1-\phi$ has a local minimum at $(x_1,t_1)$ for $t\leq t_1$, and $\nabla\phi(x_1,t_1)\neq 0$. 
Then by \eqref{7.3}, $\bar{u}-\phi$ obtains a local minimum at $(x_1,t_1)$ and so
\beq\lb{7.7}
\phi_t(x_1,t_1)\geq |\nabla\phi(x_1,t_1)|^2+\bar{b}(x_1,t_1)\cdot\nabla\phi(x_1,t_1).
\eeq
Note that since $L\geq 2\|\nabla{b}\|_\infty$, if taking $r<L^{-1}$, for $(x,t)\in \calQ_r$,
\[
|{b}(x+X(t)+Lrt e_d)-{b}(X(t))| \leq 2r\|\nabla{b}\|_\infty\leq Lr. 
\]
This implies that 
\beq\lb{7.17}
\langle \bar{b}(x,t), e_d \rangle\leq \tfrac\pi4.
\eeq

Now, the monotonicity condition on $\bar{u}$ yields that $\phi$ is non-decreasing along directions in $W_{\theta,-e_d}$ at $(x_1,t_1)$. Therefore,  $\langle\nabla\phi(x_1,t_1), -e_d \rangle\leq  \tfrac{\pi}{2}-\theta\leq\tfrac\pi4$. This and  and \eqref{7.17} imply $
\bar{b}(x_1,t_1)\cdot\nabla\phi(x_1,t_1)\leq 0$. Hence we obtain $\phi_t(x_1,t_1)\geq |\nabla\phi(x_1,t_1)|^2$ from \eqref{7.7} which proves that $w_1$ is a supersolution.

\smallskip

\noindent {\bf Step 2.} Let us bound $(\bar u(x,\eps)-\bar u(x,0))/\eps$ for $x$ near the free boundary. Fix $x_1\in \bar\Gamma(0)\cap B_{r/2}$ and let $\delta_n>0$ satisfy $\lim_{n\to\infty}\delta_n=0$. By \eqref{7.3}, set
\[
\tau_n:=\inf\{t>0\,|\, \bar u(\delta_{n}e_d,t )>0\}=\inf\{t>0\,|\, w_1(\delta_{n}e_d,t )>0\}.
\]
Since $w_1$ is a supersolution to \eqref{HS} in $\calQ_{r}$, $\tau_n>0$ and satisfies $\tau_n\to0$ as $n\to \infty$. Actually, we can fix $\tau_n\to 0$ first, and then define $\delta_n\to 0$ so that the above equalities hold.
Using that the free boundary is a Lipschitz graph and $w_1$ is a supersolution to \eqref{HS} and $2C_1w_1$ is a subsolution, applying Corollary \ref{C.7.2} twice yields
\[
w_1(x_1-\delta_n e_d,0) \asymp \delta_{n}^2/\tau_n.
\]
Since $2C_1w_1$ is a subsolution, applying Lemma \ref{L.7.1} to $2C_1w_1$ yields
\beq\lb{7.3'}
\sup_{x\in B_{\delta_{n}}(x_1)}w_1(x,\tau_n)\leq Cw_1(x_1-\delta_n e_d,0 )\leq  C\delta_{n}^2/\tau_n.
\eeq

We define
\[
p_n(x,t):=(\bar{u}(x,t+\tau_n)-\bar{u}(x,t))/{\tau_n}.
\]
By \eqref{7.3} and \eqref{7.3'}, we get
\beq\lb{7.14}
\begin{aligned}
p_n(x_1-\delta_n e_d,0 )\leq Cw_1(x_1-\delta_n e_d ,\tau_n)/\tau_n\leq C\delta_{n}^2/\tau_n^2.
\end{aligned}
\eeq
Now, we define an interior boundary layer that is approximately $\delta_n$ away from the free boundary:
\[
N_n:=\{z\in B_{r/2}\cap \bar\Omega(0)\,|\,d(z,\bar\Gamma(0))\in [\delta_n,2\delta_n]\}.
\]
For any $z\in N_n(x_1)$,
\beq\lb{7.19}
p_n(z,0)\stackrel{\eqref{7.14}}\leq Cw_1(z,0)^2/d(z,\bar{\Gamma}(0))^2.
\eeq
Note that, due to the Lipschitz condition of the free boundary, $\nabla_{-e} w_1(\cdot,0)\geq 0$ for all $e\in W_{\theta/2,-e_d}$ in $B_r$ if $r$ is small enough by \cite[Lemma 11.12]{cbook}. Therefore, we can apply \cite[Lemma 4]{Caf87} to get
\beq\lb{7.21}
w_1(z,0)/d(z,\bar{\Gamma}(0)) \asymp |\nabla w_1(z,0)|\quad\text{ in }B_{r/2}.
\eeq

Next, let $U$ be a harmonic function such that $U=|\nabla w_1(\cdot,0)|^2$ on $\bar\Gamma(0)$ ($\nabla w_1(\cdot,0)$  is understood as the nontangential limit) and $U(-e_d)=1$. By \cite[Lemma 8.2 (e)]{CJK} and Dahlberg's lemma,
\[
|\nabla w_1(\cdot,0)|^2\leq C(U(\cdot)+w_1(\cdot,0))\leq C(U(\cdot)+\ell(\delta_n))\quad\text{ in }N_n,
\]
where $\lim_{n\to\infty}\ell(\delta_n)=0$. Due to the assumption, $|p_n(-e_d,0)|\leq C$. Then by \eqref{7.19} and Dahlberg's lemma again, we find
\beq\lb{7.16}
p_n(\cdot,0)\leq C(U(\cdot)+\ell (\delta_n))\quad\text{ in }\{x\in B_{r/2}\cap \bar\Omega(0)\,|\,d(x,\bar\Gamma(0))>\delta_n\}.
\eeq

Finally, by \cite[Lemma 8.2 (e)]{CJK} and $U(-e_d)=1$, 
\[
U(x)\leq C(1+|\nabla w_1(x,0)|^2)\quad \text{ in }\bar\Omega(0)\cap B_{r/2}.
\]
Hence, passing $\tau_n\to 0$ in \eqref{7.16}, we get
\[
\bar u_+(x,0)\leq C(1+|\nabla w_1(x,0)|^2)\leq C(1+|\nabla \bar u(x,0)|^2)\quad \text{ in }\bar\Omega(0)\cap B_{r/2}
\]
where the second inequality is due to Lemma \ref{l.2.2} (or \cite[Lemma 4.4]{kimzhang2024}), \eqref{7.3} and \eqref{7.21}. This yields the conclusion.
\end{proof}

\subsection{Interior gain of monotonicity}
From now on, we will always assume that $u$ is non-decreasing along all directions in $W_{\theta,-e_d}$ with $\theta\geq \frac{\pi}{4}$ in $\calQ_2$ and \eqref{bcond}.

Suppose that $(0,0)\in\Gamma_u$. 
We consider solutions along the streamline starting at $0$, and moreover, we zoom in near the free boundary point. For some $r,\gamma\in (0,1]$, let
\[
u_{r,\gamma}:=\frac{\gamma}{r}u(r x+ X(r\gamma t), r\gamma t)
\]
which then satisfies
\begin{equation}\lb{3.17}
    \left\{\begin{aligned}
        -\Delta u_{r,\gamma} &=f_{r,\gamma}(x,t)\quad  &&\text{ in }\{u_{r,\gamma} >0\},\\
        \partial_t u_{r,\gamma} &=|\nabla  u_{r,\gamma}|^2+{b}_{r,\gamma}(x,t)\cdot\nabla u_{r,\gamma} \quad  &&\text{ on }\partial\{u_{r,\gamma} >0\},
    \end{aligned}\right.
\end{equation}
where 
\beq\lb{3.18}
f_{r,\gamma}(x,t):=r\gamma f(r  x+ X(r\gamma t)),\quad {b}_{r,\gamma}(x,t):=\gamma\left[{b}(r x+ X(r\gamma t))-{b}(X(r\gamma t))\right].
\eeq
By the assumption, there exists $L>0$ such that for all $(x,t)\in\bbR^{d+1}$,
\beq\lb{fdel}
\|f_{r,\gamma}\|_\infty\leq Lr\gamma,\quad \|\nabla f_{r,\gamma}\|_{\infty}+\|\partial_t f_{r,\gamma}\|_\infty\leq Lr^2\gamma,
\eeq
\[
|{b}_{r,\gamma}(x,t)|\leq Lr\gamma|x|,\quad \| \nabla {b}_{r,\gamma}\|_\infty\leq Lr\gamma,\quad |\partial_t {b}_{r,\gamma}(x,t)|\leq L(r\gamma)^2|x|.
\]

Below we will show that if $u$ is monotone non-decreasing along every direction in $W_{\theta,\mu}$ and $u$ is non-degenerate near the free boundary, then in the interior of $\{u>0\}$ the solution is non-decreasing in all directions of another cone with larger opening uniformly for a short time.

\begin{lemma}{\rm [Interior gain]}\lb{L.7.4}
Under the assumptions of Lemma \ref{L.7.3}, let $c,c_1>0$ and then there exist $\theta_0,c_0,c_2>0$ such that the following holds for all $\theta,\delta$ and $\gamma$ satisfying
\[
\gamma\in (0,1],\quad 
\theta\in (\theta_0,\tfrac\pi2),\quad r,\alpha\in (0,c_0)\quad\text{and}\quad r<c_0\alpha.
\]
Let  $(x_0,t_0)\in\Gamma_u\cap \calQ_{1,c}$, and, after shifting, we assume $(x_0,t_0)=(0,0)$. Let $u_{r,\gamma}$ as before and we assume that for all $\eps\in (0,1)$,
\begin{equation}\lb{nondegn}
u_{r,\gamma}(x+\eps \mu,t)\geq c_1\gamma\eps \quad \hbox{ for all } (x,t)\in \calQ_1.
\end{equation} 

Denote $\tau_1:=c_2\alpha$, 
\[
\mu_1:=\nabla u_{r,\gamma}(\tfrac{3}{4}\mu,0)/|\nabla u_{r,\gamma}(\tfrac{3}{4}\mu,0)|,
\]
and, for any $\nu\in W_{\theta/2,\mu}$ satisfying $|\nu|<c_0$, set
\[
\delta_\nu:=|\nu|\sin(\theta/2)\quad\text{and}\quad
\eta_\nu :=c_2\cos(\langle \mu_1,\nu\rangle+\tfrac\theta2).
\]
Then, if 
\[
\alpha\leq \tfrac{\pi}{2}-\tfrac{\theta}{2}-\langle \mu_1,\nu\rangle,\quad u_{r,\gamma}(\tfrac{3}{4}\mu,0)\leq 1,
\]
we have
\[
0<\sup_{y\in B_{(1+c_2\eta_\nu ) \delta_\nu}(x)}u_{r,\gamma}(y- \nu,t)\leq  (1-\eta_\nu  \delta_\nu)u_{r,\gamma}(x,t)
\]
for all $(x,t)\in B_{1/8}(\tfrac{3}{4}\mu)\times (-\tau_1,\tau_1)$.

\end{lemma}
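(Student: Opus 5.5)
This is Caffarelli's interior improvement of monotonicity, adapted to the extra source and drift terms and made uniform over a short time interval. The argument has three parts: a spatial gain at $t=0$ from harmonic analysis in the interior ball, propagation of that gain over $(-\tau_1,\tau_1)$ using the time-derivative bound of Lemma~\ref{L.7.3}, and a direct error estimate for the lower order terms.

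\textbf{Step 1: setting and scale-invariant bounds.} Write $v:=u_{r,\gamma}$ and work in $\mathcal{D}:=B_{1/4}(\tfrac34\mu)\times(-2\tau_1,2\tau_1)$.
\begin{itemize}
\item Monotonicity in $W_{\theta,\mu}$ (here $\mu=-e_d$) and the nondegeneracy \eqref{nondegn} give $\mathcal{D}\subset\Omega_v$ once $\tau_1$ is small. The reason is that Lemma~\ref{L.7.3} bounds $\partial_t v$ from above and the streamline shift controls it from below, so the free boundary moves at speed $O(1)$.
\item The Harnack inequality, Lemma~\ref{l.2.2} and \eqref{fdel} then give $v\asymp v(\tfrac34\mu,0)=:m\le 1$ on $\mathcal{D}$, with $m\gtrsim c_1\gamma$.
\item Lemma~\ref{L.7.3} gives $|\partial_t v|\le C\gamma(1+m^2)\le C$, uniformly in $r,\gamma$. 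The extra factor $\gamma$ comes from the time rescaling $r\gamma t$, and the drift term $\nabla v\cdot \dot X$ has already been absorbed into the streamline shift.
\end{itemize}

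\textbf{Step 2: spatial gain at $t=0$.} Monotonicity in the cone gives $\nabla_e v\ge0$ for all $e\in W_{\theta,\mu}$. Set
\[
h:=v(\cdot-\nu,\cdot)-v(\cdot,\cdot)\le 0 .
\]
\begin{itemize}
\item We have $-\Delta h=f_{r,\gamma}(\cdot-\nu)-f_{r,\gamma}=O(Lr^2\gamma|\nu|)$.
\item By interior gradient estimates, $\nabla v(x,0)$ stays within angle $O(\text{osc})$ of $\mu_1$ on $B_{1/8}(\tfrac34\mu)$. Hence
\[
-h(\tfrac34\mu,0)\gtrsim |\nu|\,|\nabla v|\cos\langle\mu_1,\nu\rangle\gtrsim |\nu|\,m\cos\langle\mu_1,\nu\rangle .
\]
\item The Harnack inequality applied to the nonnegative supersolution $-h$, after subtracting the $O(r^2\gamma|\nu|)$ correction, spreads this lower bound over the whole ball.
\end{itemize}

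\textbf{Step 3: the enlarged ball.} Each $y\in B_{(1+c_2\eta)\delta_\nu}(x)$ can be written as $x-\nu+z$ with $|z|\le(1+c_2\eta)|\nu|\sin(\theta/2)$. The displacement $x-(y-\nu)$ lies in a cone around $\nu$, and its angle to $\mu_1$ is at most about $\langle\mu_1,\nu\rangle+\theta/2+O(c_2\eta)$. That angle is below $\pi/2$ precisely because $\alpha\le\pi/2-\theta/2-\langle\mu_1,\nu\rangle$. So $\cos\ge\cos(\langle\mu_1,\nu\rangle+\theta/2)\gtrsim\eta_\nu/c_2$, and integrating $\nabla v$ along the segment gives
\[
v(y-\nu)\le v(x)-c\,m\,\delta_\nu\,\eta_\nu/c_2 .
\]
Dividing by $v(x)\asymp m$ yields $(1-\eta_\nu\delta_\nu)v(x)$ once $c_2$ is small.

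\textbf{Step 4: errors and time.} Three error terms must each be a small fraction of $m\,\delta_\nu\eta_\nu$.
\begin{itemize}
\item The source term contributes $O(r^2\gamma|\nu|)$, which is negligible because $r<c_0\alpha$.
\item The variation of the direction $\nabla v/|\nabla v|$ over the ball is controlled using $|D^2v|\lesssim m+r\gamma$.
\item The time variation over $|t|<\tau_1=c_2\alpha$ is $O(\tau_1)$ in absolute terms. It must be compared with the gain $m|\nu|\alpha$, using $\eta_\nu\gtrsim\alpha$ since $\cos(\cdot)\ge\sin\alpha$.
\end{itemize}

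The time comparison is the main obstacle. A raw bound $|v(x,t)-v(x,0)|\le C\tau_1$ is not proportional to $|\nu|$. Instead I would estimate the time derivative of the difference $h$ itself: by Lemma~\ref{L.7.3} and interior Schauder estimates for $\partial_t v$, which is harmonic up to $O(r\gamma)$ terms,
\[
|\partial_t\nabla v|\lesssim\gamma(1+m^2),
\]
so $\partial_t h=O(|\nu|\gamma)$. With $m\gtrsim c_1\gamma$ and $\tau_1=c_2\alpha$, the error over the time interval is $O(c_2\alpha|\nu|\gamma)\ll m|\nu|\alpha$. This step is the delicate part. Failing this route, I would run the gain at each fixed time $t$, replacing $\mu_1$ by $\mu_1(t)$, and show that $\langle\mu_1(t),\mu_1\rangle\le C|t|\le Cc_2\alpha$, which the slack $\alpha$ in the hypothesis absorbs.

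Finally, positivity of the supremum holds because $B_{(1+c_2\eta_\nu)\delta_\nu}(x)-\nu$ stays inside $\mathcal{D}\subset\Omega_v$ for $|\nu|<c_0$.
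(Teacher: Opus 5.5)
\textbf{Gap in your Step~3.} Step~3 is where the whole difficulty of the lemma sits, and as written it does not work. Set $\bar\nu:=x-(y-\nu)$. You bound $\nabla v\cdot\bar\nu$ from below along the segment from $y-\nu$ to $x$ by $|\nabla v|\,|\bar\nu|\cos\langle\mu_1,\bar\nu\rangle$. That bound requires $\nabla v$ to point within an angle much smaller than $\cos\langle\mu_1,\bar\nu\rangle$ of $\mu_1$ at every point of the segment.

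\begin{itemize}
\item $\cos\langle\mu_1,\bar\nu\rangle$ can be as small as about $\sin\alpha$, and $\alpha$ is arbitrary in $(0,c_0)$.
\item The segment lies near an arbitrary $x\in B_{1/8}(\tfrac34\mu)$. There you only know the direction of $\nabla v$ to within a fixed $O(1)$ angle of $\mu_1$ (from $|D^2v|\lesssim m\lesssim|\nabla v|$), or within $\pi-2\theta$ of it (from the cone).
\item In the iteration of Section~\ref{S4}, $\alpha=c_4(\pi-2\theta_n)$ is much smaller than both of these.
\end{itemize}

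So your remark that the direction of $\nabla v$ ``is controlled using $|D^2v|\lesssim m+r\gamma$'' does not give what you need. No pointwise angle argument can produce the gain $\eta_\nu\delta_\nu$ in the near-tangential directions. Your Step~2 does use Harnack, but only for the exact shift $\nu$, where $\cos\langle\mu_1,\nu\rangle\ge\sin(\theta/2)$ is of order one; that case is not the difficulty.

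\textbf{The missing idea.} Apply Harnack in every shifted direction, and apply it to the derivative rather than to the difference.
\begin{itemize}
\item For $\bar\nu=\nu-z$ with $|z|<\delta_\nu$ you have $\bar\nu\in W_{\theta,\mu}$. So $\nabla_{\bar\nu}v\ge0$ solves $-\Delta\nabla_{\bar\nu}v=\nabla_{\bar\nu}f_{r,\gamma}$.
\item Harnack (Lemma~\ref{l.2.2}) transfers its exact value $|\bar\nu|\,|\nabla v(x_1,0)|\cos\langle\mu_1,\bar\nu\rangle$ at $x_1=\tfrac34\mu$ to all of $B_{1/4}(x_1)$. The error is $C|\nu|r^2\gamma$, which is negligible since $r<c_0\alpha$. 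With $|\nabla v(x_1,0)|\gtrsim v(x_1,0)$ this gives the gain $2\eta_\nu\delta_\nu$ on the $\delta_\nu$-ball.
\item Applying Harnack to $v(\cdot-\bar\nu)-v$ instead does not suffice. Evaluating that difference at $x_1$ by integration again needs angular control of order $\alpha$ along a segment of length $|\nu|$, i.e.\ $|\nu|\ll\alpha$, which is not assumed.
\item The enlargement to radius $(1+c_2\eta_\nu)\delta_\nu$ must be handled separately, by the Lipschitz bound $c_2\eta_\nu\delta_\nu\sup|\nabla v|\lesssim c_2\eta_\nu\delta_\nu(v+r\gamma)$, not by widening the cone. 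Shifts in the enlarged ball can leave $W_{\theta,\mu}$, where $\nabla_{\bar\nu}v\ge0$, and hence Harnack, is unavailable.
\end{itemize}
This is exactly the paper's Step~1.

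\textbf{Your time argument is a different route from the paper's, and it works.}
\begin{itemize}
\item The paper only uses $|\partial_t v|\lesssim v\,v(x_1,0)$. This gives a relative error $C|t|$ that is not proportional to $|\nu|$. So it first gets the gain for $|t|\lesssim\delta_\nu\alpha$, then removes the $\delta_\nu$-dependence by fixing a scale $\eps_0$ and invoking Lemma~\ref{L.2.4}.
\item Controlling the time variation of $\nabla v$ makes the error proportional to $|\nu|\,|t|$ directly. Then $\tau_1=c_2\alpha$ works for all $\nu$ at once, with no appeal to Lemma~\ref{L.2.4}.
\end{itemize}

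To make this rigorous, replace $\partial_t\nabla v$ by difference quotients $D_sv=(v(\cdot,t+s)-v(\cdot,t))/s$.
\begin{itemize}
\item The one-sided Dini bound of Lemma~\ref{L.7.3}, together with the lower bound from monotonicity along streamlines, gives $|D_sv|\le C(\gamma^2+|\nabla v|^2)$ on $B_{1/2}(x_1)$.
\item $D_sv$ solves $-\Delta D_sv=D_sf_{r,\gamma}=O(r^2\gamma)$, so interior gradient estimates bound $\nabla D_sv$.
\item Your rescaled bound should read $C(\gamma^2+m^2)$, not $C\gamma(1+m^2)$. The gradient term carries no factor $\gamma$, and the drift contributes an extra $O(\gamma|\nabla v|)$ term rather than being absorbed.
\item Since $\gamma^2\le\gamma\le m/c_1$ and $m\le1$, this is still $\lesssim m$. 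The time error $\lesssim c_2\alpha|\nu|m$ is therefore beaten by the gain $\gtrsim\alpha|\nu|m$ once $c_2$ is small.
\end{itemize}
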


We emphasize that the uniform interior gain is for a short time interval $(t_0-\tau_1,t_0+\tau_1)$ depending on $\alpha$ and $|\nu|$, and this is critical. The size of the time interval is different from those in \cite{kim2,CJK}. We shall select the parameter $\gamma$ very carefully to obtain the regularity of the free boundary.

\begin{proof}

We divide the proof into 3 steps. 

\smallskip

\noindent{\bf Step 1.} Interior gain at $t=0$.  Let $\nu\in W_{\theta/2,\mu}$ with $|\nu|\in (0,\frac18)$ and let $x_1:=\tfrac{3}{4}\mu$. 
Since $\theta\geq \theta_0$, we have
\[
\delta_\nu:=|\nu|\sin(\theta/2) \asymp |\nu|.
\]
Next, take any $x\in B_{1/4}(x_1)$ and $y\in B_{\delta_\nu}(x)$, and define $\bar\nu:=\nu-(y-x)$ (then $|\bar\nu-\nu|\leq \delta_\nu$). Then $\langle\bar\nu,\nu\rangle\leq\tfrac\theta2$.

Since $u_{r,\gamma}$ is monotone along all directions in $W_{\theta,\mu}$, we also get $\nabla_{\bar\nu}u_{r,\gamma}\geq 0$. Since $\mu_1:=\nabla u_{r,\gamma}(x_1,0)/|\nabla u_{r,\gamma}(x_1,0)|$, we have 
$\langle \mu_1,\mu\rangle\leq \frac\pi2-\theta$.

By applying Lemma \ref{l.2.2} to $\nabla_{\bar \nu} u_{r,\gamma}$ and $|\nu|\asymp\delta_\nu$, we obtain
\[
\begin{aligned}
\nabla_{\bar\nu} u_{r,\gamma}(x,0)\geq c\delta_\nu\cos\langle \mu_1,\bar\nu\rangle|\nabla u_{r,\gamma}(x_1,0)|-C\delta_\nu r^2\gamma.
\end{aligned}
\]
After assuming $r$ to be small enough, we can apply \cite[Lemma 4.4]{kimzhang2024} to get
\beq\lb{7.8}
\begin{aligned}
\nabla_{\bar\nu} u_{r,\gamma}(x,0)    &\geq c\delta_\nu\cos\langle \mu_1,\bar\nu\rangle  u_{r,\gamma}(x_1,0)-C\delta_\nu r^2\gamma\\
&\geq \delta_\nu (c\alpha \, u_{r,\gamma}(x_1,0)-C r^2\gamma),
\end{aligned}
\eeq
where the second inequality is due to
\[
\langle\mu_1,\bar\nu\rangle\leq\langle\mu_1,\nu\rangle+\langle\nu,\bar\nu\rangle\leq \tfrac{\pi}{2}-\alpha.
\]

Note that $B_{1/2}(x_1)\subseteq \Omega_{u_{r,\gamma}}(t)$ for all $t\in (-c,c)$, the non-degeneracy assumption yields for some $c_1>0$,
\beq\lb{7.10}
u_{r,\gamma}(x,t)\geq c_1\gamma \quad\text{ for all }(x,t)\in  B_{1/2}(x_1)\times (-c,c) .
\eeq
Thus, it follows from \eqref{7.8} that if $r^2\ll \alpha$,
\[
\inf_{B_{1/4}(x_1)}\nabla_{\bar\nu} u_{r,\gamma}(\cdot,0)  \geq c\delta_\nu \cos\langle \mu_1,\bar\nu\rangle  u_{r,\gamma}(x_1,0)\quad\text{ for some $c>0$ independent of $\delta_\nu ,r,\alpha$,}
\]
and it follows from  Lemma \ref{l.2.2} and \eqref{fdel} that  for $x\in B_{1/4}(x_1)$,
\[
u_{r,\gamma}(x_1,0)\leq Cu_{r,\gamma}(x,0).
\]
These yield for $x\in B_{1/4}(x_1)$ and $y\in B_{\delta_\nu}(x)$,
\beq\lb{7.18}
\begin{aligned}
u_{r,\gamma}(y-\nu,0)&=u_{r,\gamma}(x-\bar\nu,0)\leq u_{r,\gamma}(x,0)-c{\delta_\nu} \cos\langle \mu_1,\bar\nu\rangle u_{r,\gamma}(x_1,0)\\
&\leq u_{r,\gamma}(x,0)-2\eta_\nu {\delta_\nu} u_{r,\gamma}(x,0),
\end{aligned}
\eeq
where, in the last inequality, we used that
\[
\langle\mu_1,\bar\nu\rangle\leq\langle\mu_1,\nu\rangle+\langle\nu,\bar\nu\rangle\leq \langle \mu_1,\nu\rangle+\tfrac\theta2,
\]
and defined
\[
\eta_\nu :=\tfrac{c}2\cos(\langle \mu_1,\nu\rangle+\tfrac\theta2)\leq \tfrac{c}2\cos\langle\mu_1,\bar\nu\rangle.
\]
Since $\langle \mu_1,\nu\rangle\leq \tfrac{\pi}{2}-\tfrac\theta2-\alpha$,  we also have $\eta_\nu \geq c'\alpha$.

By Lemma \ref{l.2.2} again, we get 
\beq\lb{7.11}
\sup_{B_{1/2}(x_1)}|\nabla u_{r,\gamma}(\cdot,0)|\leq C\inf_{B_{1/2}(x_1)}u_{r,\gamma}(\cdot,0)+Cr\gamma.
\eeq
Then,  for $x\in B_{1/4}(x_1)$ and $z:=y-x\in B_{\delta_\nu}(0)$, and for any $c_2\in(0,1)$,
\begin{align*}
&u_{r,\gamma}(x+(1+c_2\eta_\nu )z-\nu,0)-u_{r,\gamma}(x,0)\\
&\qquad\qquad \stackrel{\eqref{7.18}}\leq u_{r,\gamma}(x+(1+c_2\eta_\nu )z-\nu,0)-u_{r,\gamma}(x+z-\nu,0)-2\eta_\nu {\delta_\nu} u_{r,\gamma}(x,0)\\
&\qquad\qquad \leq c_2\eta_\nu |z|\sup_{B_{1/2}(x_1)}|\nabla u_{r,\gamma}(\cdot,0)|-2\eta_\nu {\delta_\nu} u_{r,\gamma}(x,0)\\
&\qquad\qquad \leq Cc_2\eta_\nu {\delta_\nu} u_{r,\gamma}(x,0)+Cc_2 \eta_\nu  \delta_\nu r\gamma-2\eta_\nu {\delta_\nu} u_{r,\gamma}(x,0).
\end{align*}
Since $u_{r,\gamma}(x,0)\geq c_1\gamma$ by \eqref{7.10},
if $r\ll c_1$ and $c_2\ll 1/C$,
\begin{align*}
u_{r,\gamma}(x+(1+c_2\eta_\nu )z-\nu,0)-u_{r,\gamma}(x,0) \leq -\eta_\nu {\delta_\nu} u_{r,\gamma}(x,0)
\end{align*}
In all, this gives for all $\nu\in W_{\theta/2,\mu}$ and $x\in B_{1/4}(x_1)$,
\beq\lb{7.12}
\sup_{B_{(1+c_2\eta_\nu ){\delta_\nu}}(x)}u_{r,\gamma}(\cdot-\nu,0)\leq (1-\eta_\nu {\delta_\nu})u_{r,\gamma}(x,0).
\eeq


\smallskip

\noindent{\bf Step 2.} Interior gain for a time interval. Note that
\[
\partial_{t+} u_{r,\gamma}(x,t)=\gamma^2(\partial_{t+} u+(\nabla u)\cdot b(X))(r x+ X(r\gamma t), r\gamma t).
\]
Hence, by Lemma \ref{L.7.3}, we have
\[
\partial_{t+} u_{r,\gamma}\leq C(\gamma^2+|\nabla u_{r,\gamma}|^2)\quad\text{ in } B_{1/2}(x_1).
\]
Using the assumption on the growth rate of $u$, the same inequality holds with $u_{r,\gamma}$ replaced by $-u_{r,\gamma}$. 
By \eqref{7.10} and \eqref{7.11}, we have
\[
\partial_{t+} u_{r,\gamma},\quad \partial_{t+} (-u_{r,\gamma})(x,t)\leq  C \inf_{B_{1/2}(x_1)}u_{r,\gamma}(\cdot,t)^2.
\]
This and Lemma \ref{l.2.2} imply that for $\tau_1:=c_3\delta_\nu\alpha$ for some $c_3\in (0,1)$, 
\[
|u_{r,\gamma}(x,t)-u_{r,\gamma}(x,0)|\leq Cc_3\delta_\nu\alpha\, u_{r,\gamma}(x_1,0) u_{r,\gamma}(x,t)\leq Cc_3 \delta_\nu \alpha \, u_{r,\gamma}(x,t),
\]
for all $t\in [-\tau_1,\tau_1]$ and $x\in B_{1/2}(x_1)$. Here, in the second inequality, we used the assumption that $u_{r,\gamma}(\tfrac{3}{4}\mu,0)\leq 1$.
Therefore, by \eqref{7.12} and $\eta_\nu \geq c\alpha$,
for $|t|\leq \tau_1$ and $x\in B_{1/4}(x_1)$, for some $C>1$ we have
\begin{align*}
0<\sup_{B_{(1+c_2\eta_\nu )\delta_\nu}(x)}u_{r,\gamma}(\cdot- \nu,t)&\leq (1-\eta_\nu \delta_\nu+
Cc_3M \delta_\nu \gamma\alpha )u_{r,\gamma}(x,t)\\
&\leq (1-\eta_\nu \delta_\nu/2)u_{r,\gamma}(x,t),    
\end{align*}
where the second inequality holds as $\eta_\nu \geq C\alpha$. 

\smallskip

\noindent {\bf Step 3.} Improved interior monotonicity. Let us fix $\eps_0\in (0,1)$. It follows from Step 2 that for any $\nu\in W_{\theta/2,\mu}$ such that $\delta_\nu=\eps_0$, we have 
for $|t|\leq \tau_1=c_3\eps_0 \alpha$ and $x\in B_{1/4}(x_1)$, 
\[
\sup_{y\in B_{(1+c_2\eta_\nu ) \eps_{0}}(x)}u_{r,\gamma}(y- \nu,t)\leq  (1-\eta_\nu  \eps_0/2)u_{r,\gamma}(x,t)
\]
where
\[
\eta_\nu :=\tfrac{c}2\cos(\langle \mu_1,\nu\rangle+\tfrac\theta2).
\]
This shows that $u_{r,\gamma}$ is  $(\eps,c\eta_\nu )$-monotone increasing along the direction 
\[
\xi\in B_{(1+c_2\eta_\nu )\eps_0}(\nu)\quad\text{  for all $\eps\geq C\eps_0$. }
\]
Thus, if $\eps_0$ is sufficiently small, by Lemma \ref{L.2.4} with $\kappa_1=1/2$, we obtain that
for all $(x,t)\in B_{1/8}(x_1)\times [-\tau_1,\tau_1]$,
\[
\nabla_\xi u_{r,\gamma}(x,t)/|\xi|\geq c\eta_\nu  |\xi| u_{r,\gamma}(x,t)-C\eps_0^{3/2}r\gamma.
\]
Since $|\xi|\asymp \eps_0$, \eqref{7.10} and $\eta_\nu \geq c\alpha\gg r$, it follows that
\[
\nabla_\xi u_{r,\gamma}\geq c\eta_\nu  \eps_0 |\xi| u_{r,\gamma}.
\]
It follows from that for all $\eps$ sufficiently small and $|\nu|\sin(\theta/2)=\eps$,
\[
\sup_{y\in B_{(1+c_2\eta_\nu ) \eps}(x)}u_{r,\gamma}(y- \nu,t)\leq  (1-\eta_\nu  \eps)u_{r,\gamma}(x,t)
\]
for all $(x,t)\in B_{1/8}(x_1)\times [-c_3\eps_0 \alpha,c_3\eps_0 \alpha]$ (possibly with the constant $c$ in $\eta_\nu $ replaced by a smaller one depending on $\eps_0$).
\end{proof}

\section{Continuity of the free boundary}\lb{S4}

\subsection{Propagate the gain to the boundary}
In this section, we propagate the interior regularity gain of regularity to the free boundary and show an improved regularity of the free boundary.
We start with introducing a family of text functions. The result can be found in \cite[Lemma 3.2]{kim2}.

\begin{lemma}{\rm [A family radius functions]}\lb{L.7.5}
For a given constant $A>0$ there are $k,C>0$ such that for all sufficiently small $\eta\geq 0$ and any $\tau>0$ there exists a $C^2$ function $\varphi_\eta(x,t)$ defined in
\[
D:= (B_1(0)-B_{1/8}(3\mu/4))\times (-\tau,\tau)
\]
such that
\begin{enumerate}
    \item $\varphi_\eta\in [1,1+\tau\eta ]$ in $D$,
    \smallskip
    \item $\varphi_\eta\Delta\varphi_\eta\geq A|\nabla\varphi_\eta|^2$ in $D$,
    \smallskip
    \item $\varphi_\eta\equiv 1$ outside $B_{8/9}(0)\times (-7\tau/8,\tau)$,
    \smallskip
    \item $\varphi_\eta\geq 1+\tau k\eta $ in $B_{1/2}(0)\times (-\tau/2,\tau)$,
    \smallskip
        \item $|\nabla\varphi_\eta|\leq C\eta $ and $0\leq \partial_t \varphi_\eta\leq C\eta $ in $D$.
\end{enumerate}
\end{lemma}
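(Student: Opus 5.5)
The plan is to reduce property (2) to a linear superharmonicity condition by a power transform. Then I would build the spatial profile from a harmonic-measure-type function of the annular domain $D$, and attach a monotone cutoff in time. The key computation is the following: for $p\neq 0$ and positive $\varphi$,
\[
\Delta(\varphi^{p})=p\,\varphi^{p-2}\bigl(\varphi\Delta\varphi+(p-1)|\nabla\varphi|^2\bigr).
\]
Hence with $p:=1-A<0$ (we may assume $A>1$) we get $\Delta(\varphi^{1-A})=(1-A)\varphi^{-A-1}\bigl(\varphi\Delta\varphi-A|\nabla\varphi|^2\bigr)$. So (2) is equivalent to $\psi:=\varphi^{1-A}$ being superharmonic in $x$. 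I will therefore construct $\psi=1-\eta\tau c\,\chi(t)\,h(x)$ with $h\geq 0$ subharmonic and $C^2$ in $B_1\setminus B_{1/8}(3\mu/4)$, and set $\varphi_\eta:=\psi^{-1/(A-1)}$.

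For the spatial profile, let $G$ be the harmonic function in $B_{7/8}\setminus \overline{B_{1/8}(3\mu/4)}$ with $G=1$ on $\partial B_{1/8}(3\mu/4)$ and $G=0$ on $\partial B_{7/8}$. Since $|3\mu/4|+1/8=7/8$, the two spheres are internally tangent at $7\mu/8$, so this domain is not an annulus and $G$ is not smooth up to the point of tangency. To avoid this I would use instead a hole $B_{s}(3\mu/4)$ with $s<1/8$ slightly, together with a convex neighbourhood of it, so that $\overline{B_{s}(3\mu/4)}\subset B_{7/8}$ strictly; in the worst case I would shrink the hole and adjust the constants. On the compact set $\overline{B_{1/2}}$, which stays away from both boundaries, Harnack gives $G\geq 2\kappa>0$.

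Next pick a smooth, convex, nondecreasing $f:\mathbb R\to[0,1]$ with $f\equiv 0$ on $(-\infty,\kappa/2]$ and $f(s)\geq c_0>0$ for $s\geq\kappa$. Set $h:=f(G)$ in the domain and $h:=0$ outside $B_{7/8}$. Then $\Delta h=f''(G)|\nabla G|^2\geq 0$. Near $\partial B_{7/8}$ we have $G<\kappa/2$, so $h\equiv 0$ there and $h$ is $C^2$ across that sphere; thus $h$ is $C^2$ and subharmonic on $D$. Moreover $h\equiv 0$ outside $B_{8/9}$, $h\geq c_0$ on $B_{1/2}$, and $|\nabla h|\leq C$. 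For the time cutoff, take $\chi$ smooth and nondecreasing with $\chi=0$ for $t\leq -7\tau/8$, $\chi=1$ for $t\geq -\tau/2$, and $0\leq\chi'\leq C/\tau$.

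Finally I would verify (1)--(5) for $\varphi_\eta=(1-\eta\tau c\chi h)^{-1/(A-1)}$ with $c$ a small constant depending on $A$.
\begin{itemize}
\item Since $0\leq \eta\tau c\chi h\leq \eta\tau c$ is small, Taylor expansion gives $1\leq\varphi_\eta\leq 1+\tfrac{2c}{A-1}\eta\tau$. Choosing $c\leq (A-1)/2$ yields (1).
\item Property (2) follows from $\Delta_x\psi=-\eta\tau c\chi\Delta h\leq 0$ and the identity above.
\item Property (3) holds because $h=0$ outside $B_{8/9}$ and $\chi=0$ for $t\leq -7\tau/8$.
\item Property (4) holds on $B_{1/2}\times(-\tau/2,\tau)$, where $\chi=1$ and $h\geq c_0$, giving $\varphi_\eta\geq 1+\tfrac{c c_0}{A-1}\eta\tau=:1+k\tau\eta$.
\item Property (5): $|\nabla\varphi_\eta|\leq C\eta\tau|\nabla h|\leq C\eta$, and $\partial_t\varphi_\eta=\tfrac{1}{A-1}\psi^{-A/(A-1)}\eta\tau c\chi' h$ lies in $[0,C\eta]$, since $\chi'\leq C/\tau$ cancels the factor $\tau$.
\end{itemize}
The main obstacle is the geometric step: producing a $C^2$ subharmonic profile that vanishes identically near $\partial B_{8/9}$, is bounded below on $B_{1/2}$, and avoids the tangency of the hole with the outer sphere. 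The convex-composition trick $f(G)$ is what I would try first.
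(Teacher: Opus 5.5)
Your construction is correct. Since the paper does not prove this lemma at all (it only refers to \cite[Lemma 3.2]{kim2}), your argument is a self-contained alternative rather than a reproduction of the paper's route.

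\emph{What your route buys.} The key gain is the exact linearization $\psi=\varphi^{1-A}$. It turns property (2) into the linear condition $\Delta_x\psi\le 0$, so (2) holds for every size of the perturbation with $\psi>0$. Smallness of $\eta\tau$ is then used only for the elementary two-sided bounds in (1), (4), (5). A direct ansatz $\varphi=1+c\eta\tau\chi h$ would instead require checking $\Delta h\gtrsim \eta\tau|\nabla h|^2$ near the edge of the support of $h$. That is true for $h=f(G)$ with a cubic $f$ as below, but it is an extra computation. Your other ingredients are essentially forced by the statement: the harmonic-measure function $G$ of the hole, the convex cutoff $f(G)$, and the monotone time cutoff with $\chi'\lesssim 1/\tau$. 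Indeed, by the maximum principle, a subharmonic bump that vanishes near $\partial B_{8/9}$ and is positive on $B_{1/2}$ must peak on the hole.

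\emph{Three small repairs.}
\begin{enumerate}
\item The tangency is not a real obstacle. Build $G$ on $B_{7/8}\setminus\overline{B_s(3\mu/4)}$ for a fixed $s<1/8$ (equivalently, use an outer radius in $(7/8,8/9)$). Then $G$ is smooth on $\overline{B_{7/8}}\setminus B_s(3\mu/4)$, which contains $\overline{B_{7/8}}\setminus B_{1/8}(3\mu/4)$. The closed set $\{G\ge\kappa/2\}$ lies at positive distance from $\partial B_{7/8}$. No convex neighbourhood or other adjustment is needed.
\item A convex, nondecreasing, nonconstant $f$ cannot map $\mathbb{R}$ into $[0,1]$. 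Define $f$ on $[0,1]$ only, for example $f(s)=\bigl((s-\kappa/2)_+/(1-\kappa/2)\bigr)^3$, which is $C^2$.
\item The steps ``$\eta\tau c$ is small'' and $C\eta\tau|\nabla h|\le C\eta$ silently use $\tau\le 1$. This restriction is forced by the statement itself. At $t=0$, along the segment from $-\tfrac12\mu$ to $-\tfrac89\mu$, properties (3) and (4) give $\sup|\nabla\varphi_\eta|\ge \tfrac{18}{7}k\tau\eta$. This contradicts $|\nabla\varphi_\eta|\le C\eta$ once $\tau>7C/(18k)$. So the lemma should be read for $\tau\le 1$, which is how it is used ($\tau=\tau_1=c_2\alpha$).
\end{enumerate}
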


In the following lemma, we prove interior enlargement of the monotonicity cone. In it,  we take constant $A=A_0$ from \cite[Lemma 5.2]{kimzhang2024} and then $\eta, k$ from Lemma \ref{L.7.5}.

\begin{lemma}{\rm [A comparison result]}\lb{L.7.6}
Let $u_1$ and $u_2$ be viscosity solutions to \eqref{1.1} in $ B_1\times [-\tau,\tau]$ with $f_1,{b}_1$ and $f_2,{b}_2$ in place of $f,{b}$, respectively. Let $r,\gamma\in(0,1]$.
We assume that 
\begin{enumerate}
    \item 
$f_i,b_i$ with $i=1,2$ satisfy \eqref{fdel}, and
\[
|f_1-f_2|\leq Lr^2\gamma\eps\, \quad\text{ and }\quad |{b}_1-{b}_2|\leq Lr\gamma\eps.
\]
\item $\Gamma_{u_1}$ are Lipschitz in space with small Lipschitz constant.
\item For some $c_1>0$, $u_{1}(x+h\mu,t)\geq c_1\gamma h$ on $\Gamma_{u_1}$ for all $h\in (0,1)$.
\item $(0,0)\in \Gamma_{u_2}$. 
\item The following monotonicity condition holds:
\beq\lb{72.1}
v_\eps(x,t):=\sup_{y\in B_\eps(x)}u_1(y,t)\leq u_2(x,t)\quad\text{ in }B_1\times [-\tau,\tau] .
\eeq
\end{enumerate}
If  $C\eta\geq r\gamma$ for some $C>0$, there exists $C_1>0$ sufficiently large such that if
\beq\lb{72.2}
v_{(1+\tau\eta )\eps}(x,t)\leq (1-C_1\eps\eta/\gamma) u_2(x,t)\quad\text{ in } B_{1/8}(\tfrac{3}{4}\mu) \times [-\tau,\tau],
\eeq
then
\[
v_{(1+\tau k\eta )\eps}(x,t)\leq u_2(x,t)\quad\text{ in } B_{1/2}\times [-\tau/2,\tau].
\]
\end{lemma}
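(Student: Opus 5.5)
This is Caffarelli's propagation argument, as in \cite[Lemma 3.3]{kim2}, adapted to the drift and source. Take $\varphi_\eta$ from Lemma \ref{L.7.5} with $A=A_0$ from Lemma \ref{L.2.7}. For $s\in[0,1]$ define the family
\[
w_s(x,t):=\sup_{y\in B_{\eps\varphi_{s\eta}(x,t)}(x)}u_1(y,t)\quad\text{ in } D.
\]
At $s=0$ we have $\varphi_0\equiv1$, so $w_0=v_\eps\le u_2$ by \eqref{72.1}. The goal is to show $w_s\le u_2$ in $\overline D$ for every $s\in[0,1]$. Once $s=1$ is reached, property (4) of Lemma \ref{L.7.5} gives $v_{(1+\tau k\eta)\eps}\le w_1\le u_2$ in $B_{1/2}\times[-\tau/2,\tau]$; on $B_{1/8}(\frac34\mu)$ the claim already follows from \eqref{72.2}, since $\varphi\le 1+\tau\eta$. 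The proof is by continuity in $s$. Let $s_0$ be the supremum of the $s$ for which $w_s\le u_2$. At $s_0$ there would be a first crossing point $(x_0,t_0)$ where $w_{s_0}$ touches $u_2$ from below. The parabolic boundary of $D$ is excluded: on the outer part $\varphi\equiv1$ by property (3), and at the inner ball $B_{1/8}(\frac34\mu)$ there is the strict margin $(1-C_1\eps\eta/\gamma)$ from \eqref{72.2}.

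\emph{Interior contact.} Suppose the touching point satisfies $u_2(x_0,t_0)>0$. I would apply Lemma \ref{L.2.7} with $\varphi=\eps\varphi_{s\eta}$ to get $-\Delta w_s\le(1+A_1C\eps\eta)f_1\circ y$, while $-\Delta u_2=f_2$. The difference satisfies $|f_1-f_2|\le Lr^2\gamma\eps$, and the source itself is evaluated at shifted points, so $|f_1(y)-f_1(x)|\le Lr^2\gamma\eps$. The term $A_1C\eps\eta\,Lr\gamma$ also appears. None of these is a strict sign, so a bare maximum principle does not close. Instead I would rescale the comparison: compare $(1-C_1\eps\eta/(2\gamma))^{-1}w_s$ with $u_2$ in the positive set, or add a small harmonic corrector. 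The margin from \eqref{72.2} on the inner ball, spread through the domain by the Harnack/Dahlberg-type comparison (Proposition \ref{L.2.61} together with the nondegeneracy hypothesis (3), which gives $u_1\gtrsim\gamma\cdot$dist), is of size $\eps\eta\, u/\gamma\gtrsim\eps\eta$. This absorbs the errors of size $\eps r^2\gamma+\eps\eta r\gamma$, because $C\eta\ge r\gamma$. Choosing $C_1$ large makes this absorption work.

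\emph{Free boundary contact.} The other possibility is that the touching point is a common free boundary point. Lemma \ref{L.2.8}, with $\eps_1=\eps(1+\tau\eta)$, $\eps_2=C\eps\eta$ and $\eps_3=C\eps\eta$, gives
\[
\partial_t w_s\le(1+2C\eps\eta)^2|\nabla w_s|^2+b_1\cdot\nabla w_s+C\big(\eps Lr\gamma+\eps\eta Lr\gamma+\eps\eta\big)|\nabla w_s|.
\]
Here $\partial_t\varphi\ge0$, so $\eps_3$ enters with a favorable sign. I would also use $|b_1-b_2|\le Lr\gamma\eps$. At the contact point, nondegeneracy (3) gives $|\nabla w_s|\gtrsim c_1\gamma$. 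To obtain a strict contradiction with the supersolution property of $u_2$, I would work with the scaled version $(1-C_1\eps\eta/\gamma)^{-1}$-type: the free boundary speed condition is quadratic in $|\nabla|$, so multiplying by $(1+c\eps\eta/\gamma)$ gains $\approx c\eps\eta|\nabla w|^2/\gamma\gtrsim c c_1\eps\eta|\nabla w|$. This gain dominates the error terms $\eps\eta|\nabla w|$ and $\eps r\gamma|\nabla w|$ provided $C_1$ is large and $\eta\gtrsim r\gamma$. This forces the free boundary of the perturbed $w_s$ to move too fast relative to $u_2$, contradicting the touching.

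\emph{Main obstacle.} The hard part is carrying the multiplicative margin $(1-C_1\eps\eta/\gamma)$ from the inner ball $B_{1/8}(\frac34\mu)$ to all of $D$ up to the free boundary, uniformly in $s$. This needs the boundary Harnack comparison of $u_1$ (harmonic part versus Poisson part, Proposition \ref{L.2.61}) in the Lipschitz domain with small constant. It also needs the $\gamma$-scaling from the nondegeneracy to be tracked exactly, so that the lower-order errors, all of order $r\gamma\eps$, are beaten by terms of order $\eps\eta$. The assumption $C\eta\ge r\gamma$ is what makes this balance work.
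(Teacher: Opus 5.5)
Your skeleton matches the paper: the variable-radius sup-convolution $v_{\eps\varphi_\eta}$ from Lemma \ref{L.7.5}, and Lemma \ref{L.2.8} plus $|\nabla\phi|\ge c_1\gamma$ to beat errors of size $\eps(\eta+r\gamma)|\nabla\phi|$ via $C\eta\ge r\gamma$. Running it as a continuity argument in $s$, rather than as one application of the comparison principle, is immaterial.

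The gap is the step you flag as the main obstacle, and the mechanism you offer for it fails, for two reasons.

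\textbf{No margin on the outer boundary.} A constant rescaling such as $(1-C_1\eps\eta/(2\gamma))^{-1}w_s$ or $(1+c\eps\eta/\gamma)w_s$ cannot be kept below $u_2$. On $\partial B_1$, where $\varphi_\eta\equiv1$, you only have $v_\eps\le u_2$ from \eqref{72.1}, with no margin. So the margin of \eqref{72.2} cannot be spread over all of $D$.

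\textbf{A value margin does not absorb a Laplacian error.} Scaling $w_s$ up only increases $-\Delta w_s$ (as $f_1\ge0$). A harmonic corrector leaves the error $(1+CA_1\eps\eta)f_1\circ y-f_2=O(\eps r^2\gamma+\eps\eta r\gamma)$ untouched. So your interior-contact case yields no contradiction.

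What is missing is a spatially varying version of your rescaling together with a barrier for the source. The paper's comparison function is
\[
\bar v=v_{\eps\varphi_\eta}+\eps_1w_1-\eps_2w_2 .
\]
For each $t$, $w_1$ is harmonic in $\Omega_v(t)\cap\big(B_1\setminus\overline{B_{1/8}(\tfrac{3}{4}\mu)}\big)$, with $w_1=u_2$ on $\partial B_{1/8}(\tfrac{3}{4}\mu)$ and $w_1=0$ on $\partial B_1\cup\{v=0\}$. The function $w_2$ solves $-\Delta w_2=Lr\gamma$ in the same set with zero data. The steps are then:
\begin{enumerate}
\item Since $w_1=w_2=0$ on $\partial B_1$, the order there is inherited from \eqref{72.1}.
\item With $\eps_2$ a large multiple of $\eps(\eta+r)$, the term $-\eps_2Lr\gamma$ absorbs the source errors, so $-\Delta\bar v\le f_2$.
\item Take $\eps_1$ a large multiple of $\eps\eta/\gamma$. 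Because $C\eta\ge r\gamma$ gives $\eps_2\lesssim\eps\eta/\gamma$, $\eps_1$ is also a large multiple of $\eps_2$. Once $C_1$ is large, $\eps_1$ still lies below the margin $C_1\eps\eta/\gamma$ of \eqref{72.2}, so $\bar v\le u_2$ on the inner ball.
\item Proposition \ref{L.2.61} enters through these correctors. It gives $w_1\ge c(w_2+v)$ in $B_{8/9}\setminus B_{1/8}(\tfrac{3}{4}\mu)$, the only region where $\varphi_\eta>1$. Hence $\bar v\ge(1+c\eps_1)v$ there.
\end{enumerate}

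That local multiplicative gain is what you need at the free boundary. A test function touching $\bar v$ from above at a free boundary point then also touches $(1+c\eps_1)v$. Lemma \ref{L.2.8} therefore yields the extra term $-\tfrac{c}{2}\eps_1|\nabla\phi|^2\le-\tfrac{c}{2}c_1\gamma\eps_1|\nabla\phi|$. Since $\gamma\eps_1$ is a large multiple of $\eps\eta\gtrsim\eps r\gamma$, this dominates $\big(\|b_1-b_2\|_\infty+C\eps(\eta+r\gamma)\big)|\nabla\phi|$.

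A minor point: $\partial_t\varphi_\eta\ge0$ is the unfavorable sign in Lemma \ref{L.2.8}, not the favorable one. A radius growing in time pushes the free boundary of the sup-convolution forward. You kept that error term anyway, so this is harmless.
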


\begin{proof}
Let us define a perturbation function of $u_1$ using sup-convolusions. Let $\varphi_\eta$ and $k>0$ from Lemma \ref{L.7.5} and denote $x_1:=\frac{3}{4}\mu$. Define 
\[
v(x,t):=v_{\eps\varphi_\eta(x,t)}(x,t).
\]
And each $t\in [-\tau,\tau]$, we set $w_1(x,t)$ to be the unique solution to
\[
\left\{
\begin{aligned}
    -\Delta w_1(x,t)&=0 \quad &&\text{ in }\Omega_v\cap (\partial B_1\times [-\tau,\tau] ),\\
    w_1&=0\quad &&\text{ on }\{v=0\}\cup (\partial B_1\times [-\tau,\tau] ),\\
    w_1&=u_2\quad &&\text{ on }\partial B_{1/8}(x_1) \times [-\tau,\tau],
\end{aligned}
\right.
\]
and $w_2(x,t)$ to be the unique solution to
\[
\left\{
\begin{aligned}
    -\Delta w_2(x,t)&=Lr\gamma \quad &&\text{ in }\Omega_v\cap (\partial B_1\times [-\tau,\tau] ),\\
    w_2&=0\quad &&\text{ on }\{v=0\}\cup (\partial B_1\times [-\tau,\tau] ),\\
    w_2&=0\quad &&\text{ on }\partial B_{1/8}(x_1) \times [-\tau,\tau].
\end{aligned}
\right.
\]
Then for some $C_1$ sufficiently large to be determined, define
\beq\lb{eps2}
\eps_2:=C_1\eps(\eta+r),\quad \eps_1:=C_1\eps_2+C_1\eta\eps/\gamma\quad\text{and}
\eeq
\[
\bar v(x,t):=v(x,t)+\varepsilon_1 w_1(x,t)-\varepsilon_2 w_2(x,t).
\]
Here the role of $w_1$ is to propagate the gain from the interior, while $w_2$ is needed to compensate the error coming from the perturbation of the source term $f$.

Note that $\varphi_\eta=1$ on $(( B_1\backslash B_{8/9})\times [-\tau,\tau] )\cup ( B_1\times \{-r\} )$. So, \eqref{72.1} yields $v\leq u_2$ in this region,  and so for each $t\in [-\tau,\tau]$,
\beq\lb{order1}
\bar v(\cdot,t)\leq v(\cdot,t)\leq u_2(\cdot,t)\quad\text{ on } \partial B_1\,\text{ and }\, (B_1\backslash B_{8/9})\cap \Gamma_{\bar v}(t)
\eeq
Next, since $\varphi_\eta\leq 1+r\eta$ and \eqref{eps2}, \eqref{72.2} implies that
\[
\bar v\leq v_{(1+r\eta )\eps}+\varepsilon_1 u_2\leq u_2\quad\text{ on } B_{1/8}(x_1) \times [-\tau,\tau].
\]
Since $v\leq u_2$ on $ B_1\times \{-r\} $, $\bar v(-r,\cdot)$ is supported in the positive set of $u_2$. Since $\bar v\leq u_2$ on $(\partial B_1\cup \partial B_{1/8}(x_1))\times \{-r\}$, if we are able to show that $-\Delta\bar v\leq -\Delta u_2=f_2$ on $(B_1\backslash B_{1/8}(x_1))\times\{-\tau\}$, then, by the comparison principle, we can conclude that
\[
\bar v\leq u_2\quad\text{ on } (B_{1}\backslash B_{1/8}(x_1) )\times \{-r\}.
\]

Now, we show $-\Delta\bar v\leq f_2$ in $(B_1\backslash B_{1/8}(x_1))\times [-\tau,\tau]$. 
Indeed, suppose that $v(x,t)=u_1(y(x,t),t)$ and so $|y(x,t)-x|\leq \eps\|\varphi_\eta\|_\infty\leq 2\eps$. It follows from \cite[Lemma 5.2]{kimzhang2024} that
\begin{align*}
-\Delta \bar v(x,t)&=-\Delta  v(x,t)+\varepsilon_2\Delta w_2(x,t)\\
&\leq (1+A_1\eps\|\nabla\varphi_\eta(\cdot,t)\|_\infty)(f_1(y(x,t),t))-\varepsilon_2Lr\gamma\\
&\leq (1+CA_1\eps\eta )(f_1(x,t)+2Lr^2\gamma\eps )-\varepsilon_2Lr\gamma\\
&\leq (1+CA_1\eps\eta )(f_2(x,t)+3Lr^2\gamma\eps)-C_1\eps(\eta+r)Lr\gamma
\end{align*}
which is no more than $f_2(x,t)$ due to \eqref{eps2} and $f_2\leq Lr\gamma$ by \eqref{fdel}. Here $C_1$ is taken to be large enough depending only on $CA_1$.

In view of \eqref{order1}, we then show $\bar v\leq u_2$ in $B_{8/9}\backslash B_{1/8}(x_1)\times [-\tau,\tau]$. Due to the comparison principle, it remains to show that  $\bar v$ is a viscosity subsolution in the region to \eqref{1.1} with $f_2,{b}_2$ in place of $f,{b}$. For this purpose,  it remains to show that $\bar v$  satisfies the desired inequality at the free boundary points in $B_{8/9}\times [-\tau,\tau]$. Indeed, let $\phi(x,t)\in C^{2,1}_{x,t}$ be a test function such that $\bar{v}-\phi$ has a local maximum zero at $(z_0,t_0)\in \Gamma_{\bar{v}}$ with $|z_0|< 8/9$ for $t\leq t_0$.

Since $w_1=u_2\geq\frac{c_1}2 \gamma$ on $\partial_{1/8}(x_1)$ and $w_1(\cdot,t)$ is harmonic in $(B_1\backslash B_{1/8}(x_1))\cap \Omega_v(t)$ and the free boundary is a Lipschitz graph with small Lipschitz constant, 
it follows from Proposition \ref{L.2.61} that there exists $c>0$ such that 
\beq\lb{7.31}
w_1\geq c(w_2+v)\quad\text{ in }B_{8/9}\backslash B_{1/8}(x_1)\times [-\tau,\tau].
\eeq
Also using that $\eps_2\ll \eps_1$, there exists $c_2>0$ such that
\[
\bar v=v+\eps_1w_1-\eps_2w_2\geq (1+c_2\varepsilon_1)v\quad\text{ in }B_{8/9}\backslash B_{1/8}(x_1)\times [-\tau,\tau].
\]
Consequently, $(1+c_2\varepsilon_1)v-\phi$ obtains a local maximum zero at the free boundary point $(z_0,t_0)$. Hence, Lemma \ref{L.2.8} and Lemma \ref{L.7.5} yield, at $(z_0,t_0)$,
\beq\lb{7.32}
\begin{aligned}
\phi_t&\leq (1+c_2\varepsilon_1)^{-1}(1+C\eps\eta)^2|\nabla \phi|^2+{b}_1\cdot \nabla \phi+C\eps(\eta+r\gamma)|\nabla \phi|\\
&\leq |\nabla \phi|^2+{b}_1\cdot \nabla \phi-\frac{c_2\varepsilon_1}{2}|\nabla\phi|^2+C\eps(\eta+r\gamma)|\nabla \phi|\\
&\leq |\nabla \phi|^2+{b}_2\cdot \nabla \phi-\frac{c_2\varepsilon_1}{2}|\nabla\phi|^2+c_1c_2\gamma\varepsilon_1|\nabla \phi|,   
\end{aligned}
\eeq
where, in the last inequality, $c_1$ is to be determined and we apply $C\eta\geq r\gamma$ and \eqref{eps2}. 
More precisely, in \eqref{7.32}, for some universal constant $C'$ we can replace $C\eps(\eta+r\gamma)$ by
\beq\lb{e.b}
\|b_1-b_2\|_\infty+C'\eta\eps\|b_1\|_\infty+2\eps\|\nabla b_1\|_\infty.
\eeq

Since $u_1$ is uniformly non-degenerate near the free boundary, $v$ is non-degenerate near the free boundary. Consequently, for all $h>0$ small, there exists $c_1>0$ such that
\[
\bar v(z_0+h\mu,t_0)\geq c_1\gamma h.
\]
This implies that $|\nabla\phi(z_0,t_0)|\geq c_1\gamma$. Thus, it follows from \eqref{7.32} that
$\phi_t\leq |\nabla \phi|^2+{b}_2\cdot \nabla \phi$ at $(z_0,t_0)$.

Overall, we proved that $\bar v$ is a subsolution and, by the comparison principle,  we can conclude that $\bar v\leq u_2$ in $(B_1\backslash B_{1/8}(x_1))\times [-\tau,\tau]$. 
Hence, in $B_{1/2}\times (-\tau/2,\tau)$,
\[
v_{\eps\varphi_\eta}\stackrel{\eqref{7.31}}\leq v-\varepsilon_2w_2+\varepsilon_1w_1 =\bar v \leq u_2.
\]
The  assertion of the lemma follows immediately as $\varphi_\eta\geq 1+\tau k\eta$.
\end{proof}

Lemma \ref{L.7.6} is used to obtain the following result, which will be applied iteratively to obtain the spatial regularity of the free boundary. It shows that, after knowing that the monotonicity of the solution is improved uniformly in the interior over a small time interval, then the monotonicity improves uniformly up to the free boundary over a smaller region. 

\begin{lemma}{\rm [Gain up to the boundary]}\lb{L.8.3}
Under the assumptions of Lemma \ref{L.7.4},  
suppose $(0,0)\in\Gamma_u\cap \calQ_{1,c}$. Let $c_0,\alpha,\tau_1$, $\mu_1$, $r,\gamma$ and $\delta_\nu,\eta_\nu$  for any $\nu\in W_{\theta/2,\mu}$ from the lemma. 
Then there exist $C,c_3>0$ such that if 
\[
\eta_\nu\geq Cr,\quad \alpha\leq \tfrac{\pi}{2}-\tfrac{\theta}{2}-\langle \mu_1,\nu\rangle,\quad u_{r,\gamma}(\tfrac{3}{4}\mu,0)\leq 1,
\]
we have 
\[
\sup_{y\in B_{(1+c_3\alpha\eta_\nu\gamma)\eps}(x,t)}u_{r,\gamma}(y-\nu,t) \leq u_{r,\gamma}(x,t)\quad\text{ in } B_{1/2}\times [-\tau_1/2,\tau_1]
\]
where $\eps:=|\nu|\sin(\tfrac{\theta}{2})$.
\end{lemma}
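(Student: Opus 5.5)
The plan is to apply Lemma \ref{L.7.6} with $u_1(x,t):=u_{r,\gamma}(x-\nu,t)$ and $u_2:=u_{r,\gamma}$ on $B_1\times[-\tau,\tau]$ with $\tau:=\tau_1$, taking $\eps:=|\nu|\sin(\theta/2)=\delta_\nu$. The interior input \eqref{72.2} will be supplied by Lemma \ref{L.7.4}, and the boundary propagation is then exactly the conclusion of Lemma \ref{L.7.6}.

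\textbf{Checking the hypotheses of Lemma \ref{L.7.6}.} The source and drift for $u_1$ are the translates $f_{r,\gamma}(x-\nu,t)$ and ${b}_{r,\gamma}(x-\nu,t)$. By \eqref{fdel} and $|\nu|\asymp\eps$ (using $\theta>\theta_0$) they satisfy
\[
|f_1-f_2|\leq Lr^2\gamma\eps,\qquad |{b}_1-{b}_2|\leq Lr\gamma\eps ,
\]
which is hypothesis (1). Hypothesis (2), that $\Gamma_{u_1}$ is a Lipschitz graph with small constant, follows from the cone monotonicity in $W_{\theta,\mu}$ with $\theta$ close to $\pi/2$. Hypothesis (3) is the non-degeneracy \eqref{nondegn}, and (4) holds because $(0,0)\in\Gamma_u$. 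For (5), I would use that $u_{r,\gamma}$ is monotone along every direction of $W_{\theta,\mu}$. Every $\nu-z$ with $|z|\leq\eps=|\nu|\sin(\theta/2)$ lies in $W_{\theta,\mu}$, since $\langle\nu-z,\nu\rangle\leq\theta/2$ and $\langle\nu,\mu\rangle\leq\theta/2$. Hence
\[
\sup_{B_\eps(x)}u_{r,\gamma}(\cdot-\nu,t)\leq u_{r,\gamma}(x,t),
\]
which is \eqref{72.1}.

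\textbf{Producing \eqref{72.2}.} Lemma \ref{L.7.4} gives, on $B_{1/8}(\tfrac34\mu)\times(-\tau_1,\tau_1)$,
\[
\sup_{B_{(1+c_2\eta_\nu)\eps}(x)}u_{r,\gamma}(\cdot-\nu,t)\leq(1-\eta_\nu\eps)\,u_{r,\gamma}(x,t).
\]
I would apply Lemma \ref{L.7.6} with a new parameter $\eta':=c\,\alpha\gamma\eta_\nu$, where $c$ is small. We need $(1+\tau_1\eta')\eps\leq(1+c_2\eta_\nu)\eps$; this holds because $\tau_1=c_2\alpha\leq 1$. We also need $C_1\eps\eta'/\gamma\leq\eta_\nu\eps$, i.e. $C_1c\,\alpha\leq 1$, which holds for $c$ small. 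The requirement $C\eta'\geq r\gamma$ becomes $C\alpha\eta_\nu\geq r$. This follows from the hypothesis $\eta_\nu\geq Cr$ after adjusting constants, possibly using $r<c_0\alpha$ and $\eta_\nu\gtrsim\alpha$ to absorb the factor $\alpha$.

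\textbf{Conclusion.} Lemma \ref{L.7.6} then gives $v_{(1+\tau_1k\eta')\eps}\leq u_2$ on $B_{1/2}\times[-\tau_1/2,\tau_1]$. Since $\tau_1k\eta'=c_2 k c\,\alpha^2\gamma\eta_\nu$, this yields a gain of the form $(1+c_3\alpha\eta_\nu\gamma)\eps$ up to the $\alpha$ power. Here I expect to have to reconcile the factors of $\alpha$. Either the radius family of Lemma \ref{L.7.5} is used with its amplitude normalized independently of $\tau$, or one checks that the $\tau$ factor is already included in the stated constant $c_3$ through $\tau_1=c_2\alpha$.

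\textbf{Main obstacle.} The delicate step is the bookkeeping of $\gamma$. The division by $\gamma$ in \eqref{72.2} comes from non-degeneracy being only of size $c_1\gamma$. To compensate, the parameter passed to Lemma \ref{L.7.6} must carry a factor $\gamma$, and this is why $\gamma$ appears in the final gain. One must also check that the lower-order constraint $C\eta'\geq r\gamma$ survives this $\gamma$ scaling, which it does since both sides scale linearly in $\gamma$.
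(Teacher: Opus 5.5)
Your overall route matches the paper's. You apply Lemma \ref{L.7.6} to $u_1=u_{r,\gamma}(\cdot-\nu,\cdot)$ and $u_2=u_{r,\gamma}$ with $\tau=\tau_1$, feeding in the interior estimate \eqref{7.41} from Lemma \ref{L.7.4}, and your checks of hypotheses (1)--(5) are fine.

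\textbf{The gap.} The problem is the parameter you pass to Lemma \ref{L.7.6}. With $\eta'=c\,\alpha\gamma\eta_\nu$ you count $\alpha$ twice, which causes two failures.
\begin{enumerate}
\item The conclusion radius becomes $(1+\tau_1k\eta')\eps=(1+c_2ck\,\alpha^2\gamma\eta_\nu)\eps$, which is weaker than the statement.
\item The hypothesis $C\eta'\geq r\gamma$ of Lemma \ref{L.7.6} becomes $C\alpha\eta_\nu\geq r$, and this does not follow from $\eta_\nu\geq Cr$.
\end{enumerate}
Your proposed absorption does not work. The facts $r<c_0\alpha$ and $\eta_\nu\gtrsim\alpha$ only give $\alpha\eta_\nu\gtrsim\alpha^2$, while $r$ may be as large as $c_0\alpha\gg\alpha^2$.

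Your first suggested reconciliation is also unavailable: you cannot normalize the amplitude of $\varphi_\eta$ independently of $\tau$. Lemma \ref{L.7.5} has $\varphi_\eta\equiv 1$ for $t<-7\tau/8$ and $\partial_t\varphi_\eta\leq C\eta$. That speed bound is exactly what controls the error term $\eps_3$ of Lemma \ref{L.2.8} in the subsolution argument, so the radius can only grow by $O(\tau\eta)$.

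\textbf{The fix.} Drop the extra $\alpha$ and take $\eta:=c\gamma\eta_\nu$ with $c=\min\{c_2,1/C_1\}$.
\begin{enumerate}
\item You get $\tau_1\eta=c_2c\,\alpha\gamma\eta_\nu\leq c_2\eta_\nu$ and $C_1\eps\eta/\gamma=C_1c\,\eta_\nu\eps\leq\eta_\nu\eps$. Hence \eqref{72.2} follows from \eqref{7.41}.
\item The condition $C_1\eta\geq r\gamma$ is exactly $\eta_\nu\geq r/(cC_1)$, which is the hypothesis $\eta_\nu\geq Cr$.
\item The conclusion radius is $(1+\tau_1k\eta)\eps=(1+c_2ck\,\alpha\gamma\eta_\nu)\eps$, giving $c_3=c_2ck$.
\end{enumerate}
So the factor $\alpha$ in the statement comes solely from the time length $\tau_1=c_2\alpha$. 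This is your second tentative reconciliation, and it is the one that works.
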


\begin{proof}
Let $r,\gamma, \alpha$ satisfy the conditions of Lemma \ref{L.7.4}. 
It follows from Lemma \ref{L.7.4} that 
there is $c_2\in (0,1)$ in dependent of $r,\gamma$ and $\alpha$ such that
 for any $\nu\in W_{\theta/2,\mu}$ satisfying $|\nu|\ll1$ and
 \[
\alpha\leq \tfrac{\pi}{2}-\tfrac{\theta}{2}-\langle \mu_1,\nu\rangle, 
\] 
 we have 
\beq\lb{7.41}
\sup_{y\in B_{(1+c_2\eta_\nu ) \eps}(x)}u_{r,\gamma}(y- \nu,t)\leq (1-\eta_\nu\eps) u_{r,\gamma}(x,t)\quad\text{ in } B_{1/8}(x_1)\times (-\tau_1,\tau_1)
\eeq
where
\[
\eta_\nu:=c_2\cos(\langle \mu_1,\nu\rangle+\theta/2),\quad\eps:=|\nu|\sin(\tfrac{\theta}{2})\quad\text{and}\quad \tau_1:=c_2\alpha
\]
and $x_1:=\frac{3}{4}\mu$, $\mu_1:=\nabla u_{r,\gamma}(x_1,0)/|\nabla u_{r,\gamma}(x_1,0)|$.

Now, we let $\varphi_\eta$ from Lemma \ref{L.7.5} with $\tau:=\tau_1$ and $\eta:=c\eta_\nu \gamma$, where $c:=\min\{c_2,1/C_1\}$ with $C_1$ from Lemma \ref{L.7.6}. Define
\[
u_1(x,t):=u_{r,\gamma}(x-\nu,t)\quad\text{and}\quad u_2(x,t):=u_{r,\gamma}(x,t).
\]
Then $u_1$ and $u_2$ are viscosity solutions to \eqref{1.1} in $ B_1\times [-\tau,\tau]$ with $f_1,{b}_1$ and $f_2,{b}_2$ in place of $f,{b}$, respectively, where
\[
f_1(x,t)=f_{r,\gamma}(x-\nu,t),\quad b_1(x,t)=b_{r,\gamma}(x-\nu,t),\quad
f_2=f_{r,\gamma},\quad b_2=b_{r,\gamma}.
\]
Since $|\nu|\asymp \eps$, the conditions in Lemma \ref{L.7.6} on the coefficients are satisfied. 
Since $u_{r,\gamma}(\cdot,t)$ is non-decreasing along all directions of $W_{\theta,\mu}$, the definition of $\eps$ yields
\[
\sup_{y\in B_{\eps }}u_1(y,t)\leq u_2(x,t).
\]
As a consequence of \eqref{7.41} and $\tau_1(c\eta_\nu\gamma)\leq c_2\eta_\nu$, for $(x,t)\in B_{1/8}(x_1)\times (-\tau_1,\tau_1)$,
\begin{align*}
\sup_{y\in B_{(1+\tau_1\eta)\eps}(x,t)}u_1(y,t)&
\leq \sup_{y\in B_{(1+c_2\eta_\nu)\eps}(x,t)}u_{r,\gamma}(y-\nu,t)\\
&\leq  (1-\eta_\nu\eps)u_{r,\gamma}(y-\nu,t)\leq (1-C_1\eta/\gamma)u_2(x,t).    
\end{align*}
Hence, the conditions of Lemma \ref{L.7.6} all hold and it implies that, if $C_1\eta\geq r\gamma$,
\[
\sup_{y\in B_{(1+\tau k\eta)\eps}(x,t)}u_1(y,t) \leq u_2(x,t)\quad\text{ in } B_{1/2}\times [-\tau/2,\tau],
\]
yielding, for $c':=cc_2$,
\[
\sup_{y\in B_{(1+c'\alpha\eta_\nu\gamma)\eps}(x,t)}u_{r,\gamma}(y-\nu,t) \leq u_{r,\gamma}(x,t)\quad\text{ in } B_{1/2}\times [-\tau/2,\tau].
\]
\end{proof}

\subsection{Iteration}

In this section, we prove that the free boundary is $C^1$ in space. We will need the following lemma for superharmonic functions  in Lipschitz domains.

It basically says that, assuming $(0,0)\in\Gamma_u$, then $\frac{u(x,0)}{d(x,\Gamma_u(0))}$ can not grow too fast as $x$ approaches the free boundary point $0$ along non-tangential directions.

\begin{lemma}\lb{Lipdomain}
Let $g:\bbR^{d-1}\to\bbR$ be a Lipschitz continuous function with Lipschitz constant $c_g>0$ small enough such that $g(0)=0$. Let $\rho\geq 0$ solve
\[
-\Delta \rho=f\text{ in }B_2\cap \{x_d<g(x')\},\quad \rho=0\text{ on }B_2\cap \{x_d=g(x')\}.
\] 
For $i=0,1$, $K\geq 2$ and $\kappa\in(0,1)$, if defining
\[
\gamma_{i,\kappa}:=
\begin{cases}
\left[K^i\rho(-K^{-i}\kappa e_d)\right]^{-1} &\text{ if }\rho(-K^{-i}e_d)\leq K^{-i},\\
1 &\text{ otherwise,}
\end{cases}
\]
then if $K$ is sufficiently large but independent of $\kappa$, we have
\[
{K\gamma_{1,\kappa}}\geq{2\gamma_{0,\kappa}}.
\]
\end{lemma}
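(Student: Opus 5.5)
The inequality $K\gamma_{1,\kappa}\geq 2\gamma_{0,\kappa}$ is, after unwinding the definitions, a statement that $\rho$ decays at a definite geometric rate along the ray $-se_d$ as $s\to 0$, uniformly in the base point $\kappa$. So I would first prove the decay estimate
\begin{equation*}
\rho(-K^{-1}\kappa e_d)\leq \tfrac12\,\rho(-\kappa e_d)\qquad\text{for all }\kappa\in(0,1),
\end{equation*}
with $K$ large and independent of $\kappa$, and then do a short case analysis. Throughout I read the case condition in the definition of $\gamma_{i,\kappa}$ as $\rho(-K^{-i}\kappa e_d)\leq K^{-i}$, i.e.\ evaluated at the same point as the value, which appears to be the intended meaning. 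With the literal condition at $-K^{-i}e_d$ the mixed cases below would need an extra comparison between $\rho(-K^{-i}e_d)$ and $\rho(-K^{-i}\kappa e_d)$, which I would handle by Harnack's inequality at the cost of $\kappa$-dependent constants.

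\textbf{Step 1: decay estimate.} I rescale to the scale $\kappa$: set $\rho_\kappa(x):=\kappa^{-1}\rho(\kappa x)$. It solves $-\Delta\rho_\kappa=\kappa f(\kappa\,\cdot)$ below the graph of $g_\kappa(x'):=\kappa^{-1}g(\kappa x')$, which has the same Lipschitz constant $c_g$, and the right-hand side only gets smaller. I split $\rho_\kappa=w_1+w_2$ in the unit-width strip under the graph, where $w_1$ is harmonic with the boundary values of $\rho_\kappa$ and $w_2$ carries the source with zero boundary data. By Proposition~\ref{L.2.61} (after normalizing $w_2$ by $\|f\|_\infty$ and $w_1$ by its size on the bottom boundary), $w_2\leq Cw_1$, so $\rho_\kappa\asymp w_1$ near the graph. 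For the harmonic function $w_1$ vanishing on a Lipschitz graph with small constant, the boundary Hölder decay (Carleson estimate plus iteration of the maximum principle in shrinking boxes) gives
\begin{equation*}
w_1(-se_d)\leq C s^{\alpha}\,w_1(-e_d),\qquad 0<s<1,
\end{equation*}
with $\alpha>0$, and in fact $\alpha$ close to $1$ when $c_g$ is small. Hence $\rho(-K^{-1}\kappa e_d)\leq C K^{-\alpha}\rho(-\kappa e_d)$, and choosing $K$ with $CK^{-\alpha}\leq\frac12$ gives the claim. The constants depend only on $d$ and $c_g$.

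\textbf{Step 2: case analysis.} There are four cases.
\begin{itemize}
\item If both $\gamma$'s are given by the first formula, $K\gamma_{1,\kappa}=\rho(-K^{-1}\kappa e_d)^{-1}\geq 2\rho(-\kappa e_d)^{-1}=2\gamma_{0,\kappa}$ by Step 1.
\item If $\gamma_{0,\kappa}=1$ and $\gamma_{1,\kappa}$ is given by the formula, then $\gamma_{1,\kappa}=(K\rho(-K^{-1}\kappa e_d))^{-1}\geq1$, so $K\gamma_{1,\kappa}\geq K\geq2$.
\item If $\gamma_{1,\kappa}=1$, then $\rho(-K^{-1}\kappa e_d)>K^{-1}$, and Step 1 gives $\rho(-\kappa e_d)>2/K$. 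Thus either $\gamma_{0,\kappa}=1\leq K/2$, or $\gamma_{0,\kappa}=\rho(-\kappa e_d)^{-1}<K/2$; in both sub-cases $K\gamma_{1,\kappa}=K\geq 2\gamma_{0,\kappa}$.
\end{itemize}

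\textbf{Main obstacle.} The delicate step is the uniformity in Step 1. Proposition~\ref{L.2.61} must be applied after rescaling, with the strip and normalization chosen so that the comparison $w_2\leq Cw_1$ holds down to scale $K^{-1}$ without $\kappa$-dependence. I would also have to check that the lateral and bottom boundaries of the rescaled domain (coming from $B_2$) stay far enough away, and that the source contribution cannot dominate when $\rho(-\kappa e_d)$ is tiny. For the latter, $\rho\geq w_1$ and the comparison $w_2\leq Cw_1$ are exactly what rule this out. If Proposition~\ref{L.2.61} does not directly fit the rescaled geometry, I would instead use the explicit barrier $\rho\leq w_1+C\kappa^2\|f\|_\infty(\text{dist to graph})$ together with nondegeneracy of $w_1$ on the Lipschitz graph.
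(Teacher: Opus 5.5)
\textbf{The gap is Step 1.} The ratio bound $\rho(-K^{-1}\kappa e_d)\le\frac12\rho(-\kappa e_d)$ for all $\kappa\in(0,1)$ with $K=K(d,c_g)$ is false, and Proposition~\ref{L.2.61} does not give it.

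After your normalization, that proposition only yields $w_2\le C\,\kappa\|f\|_\infty\big(\min_{\partial_b\Sigma'_L}\rho_\kappa\big)^{-1}w_1$. The prefactor is unbounded exactly when $\rho$ at distance $\kappa$ from the graph is small compared with $\kappa^2\|f\|_\infty$. Your closing remark that ``$\rho\ge w_1$ and $w_2\le Cw_1$ rule this out'' is circular: a uniform $C$ is precisely what requires the source to be dominated by the boundary data.

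Here is a concrete counterexample. Take $g\equiv0$ and $\rho(x)=\phi(-x_d)$, where $\phi$ is concave, $\phi(0)=0$, $\phi\equiv\epsilon$ on $[2\delta,\infty)$, $|\phi''|\lesssim\epsilon\delta^{-2}$, $|\phi'''|\lesssim\epsilon\delta^{-3}$, and $\delta=\epsilon^{1/3}$. Then $f=-\phi''(-x_d)\ge0$ with $\|f\|_{C^{0,1}}\lesssim1$. Yet $\rho(-K^{-1}\kappa e_d)=\rho(-\kappa e_d)=\epsilon$ whenever $\kappa/K>2\delta$.

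This also breaks the lemma itself under your reading of the definition. Keeping ``$\le$'' means $\gamma_{i,\kappa}=\max\{1,[K^i\rho(-K^{-i}\kappa e_d)]^{-1}\}$, and then your case analysis needs the ratio bound exactly in the regime $\rho(-\kappa e_d)<2/K$. The same example with $\kappa=\frac12$ and $\epsilon<(4K)^{-3}$ gives $\gamma_{0,\kappa}=1/\epsilon$ and $\gamma_{1,\kappa}=1/(K\epsilon)$. Hence $K\gamma_{1,\kappa}=\gamma_{0,\kappa}<2\gamma_{0,\kappa}$ for every $K$, so that version of the lemma is false and no estimate can rescue the case analysis.

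\textbf{What is missing is the cap in the normalization.} The intended definition has the case inequality reversed: $\gamma_{i,\kappa}=\min\{1,[K^i\rho(-K^{-i}\kappa e_d)]^{-1}\}\le1$. The paper's proof starts from $\gamma_{i,\kappa}\le1$, and the iteration uses $\gamma_n\le1$ and $u_n(\tfrac34\mu_n,0)\le1$.

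With this definition, $K\gamma_{1,\kappa}=\min\{K,\rho(-K^{-1}\kappa e_d)^{-1}\}$. Since $K\ge2\ge2\gamma_{0,\kappa}$, it suffices to prove the \emph{absolute} bound $w(-K^{-1}e_d)\le\frac12$ for $w:=\gamma_{0,\kappa}\rho(\kappa\,\cdot)$. This $w$ satisfies $w(-e_d)\le1$ and $-\Delta w\le\kappa^2\|f\|_\infty\le\|f\|_\infty$, precisely because $\gamma_{0,\kappa}\le1$.

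Your ingredients do give this bound if you keep the source additive instead of absorbing it into $w_1$:
\begin{enumerate}
\item Split $\rho(\kappa\,\cdot)$ in $B_1\cap\{x_d<g_\kappa\}$ into its harmonic replacement plus a zero-data part, which is bounded by $\kappa^2\|f\|_\infty$ times the torsion function.
\item Apply boundary H\"older decay to both parts.
\item Control the harmonic part by the Carleson estimate and interior Harnack.
\end{enumerate}
This yields $\rho(-K^{-1}\kappa e_d)\le CK^{-\alpha}\big(\rho(-\kappa e_d)+\kappa^2\|f\|_\infty\big)$ with $C=C(d,c_g)$. That settles both regimes ($\rho(-\kappa e_d)\ge1$ and $<1$) once $K\ge(4C(1+\|f\|_\infty))^{1/\alpha}$.

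The paper does essentially this: it normalizes to $w(-e_d)=1$ and uses Corollary~4.2 and Lemma~2.10 of \cite{kimzhang2024}, with exponent $2-\beta$. Note that $K$ must then depend on $\|f\|_\infty$, not only on $d$ and $c_g$. The example above with $\epsilon=2$ and $\delta\to0$ shows this is unavoidable. It is harmless, since the lemma only asks for independence of $\kappa$.
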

\begin{proof}
Since $\gamma_{i,\kappa}\leq 1$, without loss of generality, we can assume that $\gamma_{1,\kappa}<1$. Define
\[
w(x):= \gamma_{0,\kappa}\rho(\kappa x)
\]
which then satisfies
\[
-\Delta w (x)= \frac{\kappa^2}{\gamma_{0,\kappa}}f(\kappa x)
\quad\text{and}\quad w(-e_d)=1.
\]

We also let $w_1$ be harmonic in $\Omega_w$ such that $w_1=w$ over $\partial (\Omega_w\cap B_1)$. Then, it follows from \cite[Corollary 4.2]{kimzhang2024} that $w\leq Cw_1$ in $B_{1/2}$. Then by \cite[Lemma 2.10]{kimzhang2024}, there exists $\beta\in (1,2)$ depending only on $d$ and $c_g$ such that
\[
w(-K^{-1}e_d)\leq Cw_1(-K^{-1}e_d)\leq  CK^{-2+\beta}.
\]
Thus, we get
\[
\gamma_{1,\kappa}
=\frac{\gamma_{0,\kappa}}{K w(-K^{-1}e_d)}\geq \gamma_{0,\kappa}K^{1-\beta}/C,
\]
which yields the conclusion for $K$ large enough depending only on $d$ and $c_g$.
\end{proof}

The proof of Theorem \ref{T.8.4} requires an intermediate cone theorem stronger than the one in \cite[Theorem 4.2]{cbook}. We specifically need to address the lack of gain from sides nearly perpendicular to $\tilde\mu$. Since this specific gain is negligible compared to that of the others, the latter remains sufficient to ensure a larger cone. Additionally, we track the explicit dependence on a small parameter $\kappa$ associated with the gain. 

While there are technical differences and improvement, the essential idea of the argument is similar to the one of \cite[Theorem 4.2]{cbook}. 

\begin{lemma}\lb{L.cone}
{\rm [Intermediate cone]} There exist $c_4,c_5>0$ such that the following holds for all $\tfrac{\pi}{4}  < \theta < \tfrac{\pi}{2}$ and $\kappa\in (0,1)$. For unit vectors $\widetilde\mu,e$, let $H(\widetilde\mu):=\{x\,|\,x\cdot\wu\geq 0\}$ and assume that the cone $W_{\theta, {e}} \subset H(\widetilde\mu)$. For any $\sigma \in W_{\theta/2, {e}}$, 
set
\[ 
\eta_\sigma :=  \cos(\langle\sigma, \widetilde\mu\rangle+\frac\theta2), \quad  \rho_\sigma:=|\sigma|\sin(\frac{\theta}{2})(1+\kappa\eta_\sigma),
\]
and for some $\alpha\geq 0$,
\[
E_\sigma:=\frac{\pi}{2}-\frac{\theta}2-\langle \sigma,\widetilde{\mu}\rangle, \quad S_{\kappa,\alpha} := \Big[\bigcup_{\sigma \in W_{\theta/2, {e}} \& E_\sigma\geq\alpha} B_{\rho_\sigma}(\sigma)\Big]\bigcup W_{\theta,{e}}.
\]
If $\alpha\leq c_4(\frac{\pi}{2}-\theta)$,
then there exist $\bar{\theta}\in (0,\frac\pi2)$ and $\bar\mu\in\bbR^d$ such that
\[
W_{\bar\theta , \bar\mu} \subset S_{\kappa,\alpha} 
\quad
\text{and}
\quad 
\frac{\pi}{2} - \bar\theta  \leq (1-c_5\kappa) \left( \frac{\pi}{2} - \theta \right). 
\]
\end{lemma}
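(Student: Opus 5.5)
The plan is to follow the proof of \cite[Theorem 4.2]{cbook}, keeping track of how the gain depends on $\kappa$ and $\alpha$. Throughout, write $\delta:=\frac{\pi}{2}-\theta$. Since $W_{\theta,e}\subset H(\widetilde\mu)$, we have $\langle e,\widetilde\mu\rangle\leq \delta$.

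\emph{Reduction to a planar problem.} By homogeneity it suffices to show that every unit vector $p$ with $\langle p,\bar\mu\rangle\leq\bar\theta$ lies in $S_{\kappa,\alpha}$. Take the candidate axis $\bar\mu:=e$, or a slight tilt of $e$ away from $\widetilde\mu$ if that proves necessary. For a unit direction $p$ just outside $W_{\theta,e}$, write $\langle p,e\rangle=\theta+s$ with $s>0$ small. Work in the two-plane spanned by $e$ and $p$, and choose $\sigma\in\partial W_{\theta/2,e}$ in that plane on the same side as $p$. Then $\sigma$ is at angle $\theta/2$ from $e$ and at angle $\theta/2+s$ from $p$. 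The ball $B_{\rho_\sigma}(\sigma)$ is seen from the origin under half-angle $\arcsin\bigl(\sin\frac\theta2\,(1+\kappa\eta_\sigma)\bigr)$. This exceeds $\theta/2$ by roughly $\kappa\eta_\sigma\tan\frac\theta2\gtrsim\kappa\eta_\sigma$, because $\theta>\pi/4$ keeps $\tan\frac\theta2$ bounded below. Hence $p\in B_{\rho_\sigma}(\sigma)$, after rescaling $\sigma$ along its ray, whenever $s\leq c\kappa\eta_\sigma$.

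\emph{Lower bounds on $\eta_\sigma$ and $E_\sigma$.} The key quantitative step is to bound $\eta_\sigma=\cos(\langle\sigma,\widetilde\mu\rangle+\frac\theta2)=\sin E_\sigma$ from below for the relevant $\sigma$. By the triangle inequality,
\[
\langle\sigma,\widetilde\mu\rangle\leq \langle \sigma,e\rangle+\langle e,\widetilde\mu\rangle\leq \tfrac\theta2+\delta,
\]
but this only gives $E_\sigma\geq \frac\pi2-\theta-\delta=0$, which is useless. A finer bound is needed. Decompose according to the angle $\beta$ between the plane of $(e,p)$ and the plane of $(e,\widetilde\mu)$. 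When $p$ points away from $\widetilde\mu$ (i.e.\ $\cos\beta\le 0$ in the appropriate sense), the spherical law of cosines gives $\langle\sigma,\widetilde\mu\rangle\leq \frac\pi2-c\,\delta$ up to small terms, hence $E_\sigma\gtrsim \delta$. This uses the assumption $\langle e,\widetilde\mu\rangle\le\delta$ together with $\theta/2<\pi/4$. The hypothesis $\alpha\leq c_4\delta$ then guarantees $E_\sigma\geq\alpha$, so this $\sigma$ is admissible in $S_{\kappa,\alpha}$, and $\eta_\sigma\gtrsim\delta$. For directions $p$ on the side of $\widetilde\mu$, the sides nearly perpendicular to $\widetilde\mu$ mentioned before the lemma, the gain may degenerate. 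Here the plan is to tilt $\bar\mu$ away from $\widetilde\mu$ by an angle of order $\kappa\delta$. Then the requirement $\langle p,\bar\mu\rangle\le\bar\theta$ on that side is already met inside $W_{\theta,e}$, while on the opposite side the gain $c\kappa\delta$ from the first case absorbs the tilt.

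\emph{Conclusion and main obstacle.} Combining the two regimes yields $W_{\bar\theta,\bar\mu}\subset S_{\kappa,\alpha}$ with $\bar\theta-\theta\geq c_5\kappa\delta$, which is exactly the bound $\frac\pi2-\bar\theta\leq(1-c_5\kappa)\delta$. I expect the main obstacle to be the intermediate directions $p$, for which $\sigma$ lies near the great circle orthogonal to $\widetilde\mu$. There one must balance the tilt of $\bar\mu$ against the $\beta$-dependent lower bound on $E_\sigma$. My first attempt is a continuity/compactness argument in $\beta$, using the explicit spherical-trigonometry formula for $\langle\sigma,\widetilde\mu\rangle$ to show that the gain is at least $c\kappa\delta\,(1-\cos\beta)_+$ plus the tilt contribution, and then choosing the tilt to equalize the worst case.
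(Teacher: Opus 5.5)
\textbf{There is a genuine gap: the geometry of the gain is reversed, so your tilt goes the wrong way.} Your planar reduction (each ball $B_{\rho_\sigma}(\sigma)$ subtends a half-angle $\frac\theta2+c\kappa\eta_\sigma$) is fine. The problem is which side of $W_{\theta,e}$ gains.

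Both $\eta_\sigma=\cos(\langle\sigma,\widetilde\mu\rangle+\tfrac\theta2)$ and $E_\sigma$ decrease as $\langle\sigma,\widetilde\mu\rangle$ grows. So the gain is large for directions close to $\widetilde\mu$ and degenerates on the side of $W_{\theta,e}$ \emph{opposite} to $\widetilde\mu$. Those are the ``sides nearly perpendicular to $\widetilde\mu$'': the generatrix of $W_{\theta,e}$ in $\mathrm{span}\{e,\widetilde\mu\}$ opposite to $\widetilde\mu$ makes angle $\theta+\langle e,\widetilde\mu\rangle\le\frac\pi2$ with $\widetilde\mu$.

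Take the admissible extreme case $\langle e,\widetilde\mu\rangle=\delta$. The generatrix $\sigma_2$ of $W_{\theta/2,e}$ in $\mathrm{span}\{e,\widetilde\mu\}$ opposite to $\widetilde\mu$ has $\langle\sigma_2,\widetilde\mu\rangle=\frac\theta2+\delta=\frac\pi2-\frac\theta2$, hence $E_{\sigma_2}=\eta_{\sigma_2}=0$. The generatrix $\sigma_1$ on the side of $\widetilde\mu$ has $E_{\sigma_1}\ge\delta$.

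Your step ``$\langle\sigma,\widetilde\mu\rangle\le\frac\pi2-c\delta$, hence $E_\sigma\gtrsim\delta$'' drops the $-\frac\theta2$ in $E_\sigma$. What you would need is $\langle\sigma,\widetilde\mu\rangle\le\frac\pi2-\frac\theta2-c\delta$, and that is exactly what fails for $\sigma$ pointing away from $\widetilde\mu$. For the same reason your profile $c\kappa\delta(1-\cos\beta)_+$ has the wrong sign; the correct dependence is through $1+\cos$ of the dihedral angle measured from the $\widetilde\mu$ side.

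Consequently, $\bar\mu=e$, or $\bar\mu$ tilted away from $\widetilde\mu$, pushes the far part of $\partial W_{\bar\theta,\bar\mu}$ out of $W_{\theta,e}$ into directions where the balls carry no gain. In the extreme case one checks, using $\langle\sigma,p\rangle\ge\langle p,\widetilde\mu\rangle-\langle\sigma,\widetilde\mu\rangle$, that for $\kappa\le\frac12$ no ball $B_{\rho_\sigma}(\sigma)$ contains a direction $p$ in the half of $\mathrm{span}\{e,\widetilde\mu\}$ opposite to $\widetilde\mu$ with $\langle p,e\rangle>\theta$. So for such $\bar\mu$, $W_{\bar\theta,\bar\mu}\not\subset S_{\kappa,\alpha}$ for every $\bar\theta>\theta$.

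\textbf{The correct mechanism (the paper's) is the mirror image of yours.} Decompose $\sigma/|\sigma|=\cos\frac\theta2\,e+\sin\frac\theta2\,\sigma_\perp$ and $\widetilde\mu$ along $e$ and $e^\perp$. This gives an explicit formula for $\sigma\cdot\widetilde\mu/|\sigma|$ in terms of the dihedral angle $\omega$ between $\mathrm{span}\{e,\sigma\}$ and $\mathrm{span}\{e,\widetilde\mu\}$, and yields $\eta_\sigma\ge\frac12\sin\theta\cos\theta(1+\cos\omega)$. Hence $\eta_\sigma$ and $E_\sigma$ are at least $0.05\sin\delta$ uniformly for $\omega\le\frac34\pi$. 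Every such $\sigma$ is then admissible once $c_4$ is small, and the cone widens by $c\kappa\delta$ in that region. No compactness argument is needed for the intermediate directions.

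One then tilts the axis \emph{toward} $\widetilde\mu$: set $\bar e=e+\eps\kappa\delta\,\mu^1$, with $\mu^1$ the unit vector along the projection of $\sigma_1$ onto $e^\perp$. Directions of the enlarged cone with $\omega>\frac34\pi$ then satisfy $\nu\cdot\mu^1\le-c|\nu|$. They therefore fall back inside $W_{\theta,e}$, where no gain is needed.
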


\begin{proof}

\begin{figure}[h]
\label{fig1}
    \centering
    \includegraphics[scale=0.3]{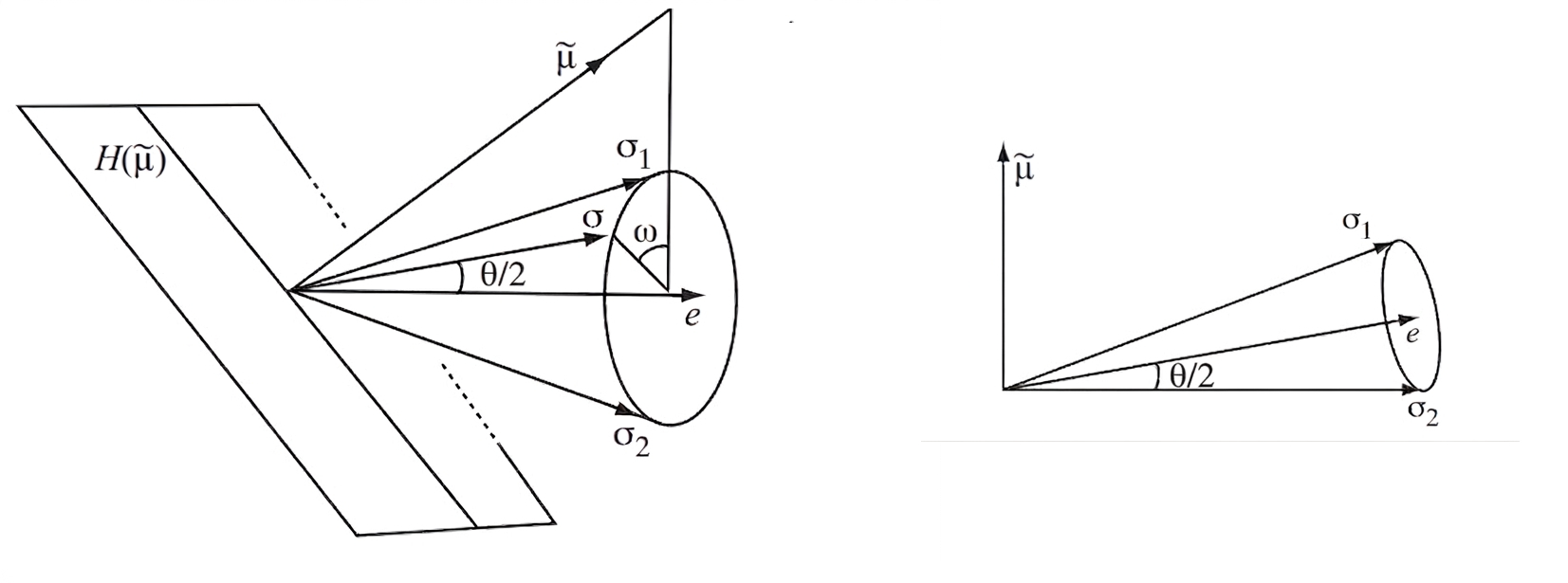}
\caption{Cone of monotonicity (modified from \cite[Figures 4.4 and 4.5]{cbook})}
\end{figure}

Let $\beta:=\frac{\pi}{2}-\theta$ and denote $\gamma:=\langle e,\wu\rangle$. Then $\gamma\leq \beta$ as $W_{\theta,e}\subset H(\wu)$. Let $\sigma_1, \sigma_2$ be two unit vectors such that they are the two generatrices of $W_{\theta/2, {e}}$ belonging to $\text{span}\{\widetilde{\mu}, {e}\}$ (see Figure \ref{fig1}). Suppose that $\sigma_1$ is the nearest to $\widetilde{\mu}$ of the two. Thus, the angles satisfy:
\begin{equation*}
\langle \sigma_1, \widetilde{\mu} \rangle \leq \max\left\{\frac{\pi}{2} - \frac{3\theta}2,\frac{3\theta}2-\frac{\pi}{2} \right\}, \quad \langle \sigma_2, \widetilde{\mu} \rangle \leq \frac{\pi}{2}-\frac{\theta}{2}. 
\end{equation*}
Note that the gain is given by $\eta_\sigma$ and so these two directions give the maximum and the minimum gain in the opening of the cone $W_{\theta, {e}}$, respectively.

Consider a general generatrix $\sigma \in \partial W_{\theta/2, {e}}$, $|\sigma| = 1$, and let $\omega$ be the solid angle (dihedral angle) between the planes $\text{span}\{{e}, \sigma\}$ and $\text{span}\{{e}, \widetilde{\mu}\}$.

We decompose $\sigma$ and $\widetilde\mu$ into components parallel and orthogonal to $e$:
\[
\sigma/|\sigma| = (\cos \tfrac\theta2) e + (\sin\tfrac\theta2)\sigma_\perp, \quad \wu = (\cos \beta) e + (\sin\beta)\wu_\perp
\]
where $\sigma_\perp, \wu_\perp \in e^\perp$ and are unit vectors. The inner product $ \sigma\cdot \wu $ is then:
\beq\lb{4.6}
\begin{aligned}
 \frac{\sigma \cdot \wu}{|\sigma|} &= \left( \left(\cos \frac{\theta}{2}\right) e + \left(\sin \frac{\theta}{2}\right) \sigma_\perp \right) \cdot \left( \left(\cos \beta\right) e + \left(\sin \beta\right) \wu_\perp \right) \\
&= \cos \frac{\theta}{2} \cos \beta + \cos \frac{\theta}{2} \sin \beta \left( e \cdot \wu_\perp \right) + \sin \frac{\theta}{2} \cos \beta \left( \sigma_\perp \cdot e \right) + \sin \frac{\theta}{2} \sin \beta \left( \sigma_\perp \cdot \wu_\perp \right) \\
&= \cos \frac{\theta}{2} \sin\theta + \sin \frac{\theta}{2} \cos \theta \cos \omega,
\end{aligned}
\eeq
where, in the last equality, we used $\cos \omega :=  \sigma_\perp\cdot \wu_\perp$ and $\beta=\tfrac\pi2-\theta$. Then 
\[
\frac{\sigma \cdot \wu}{|\sigma|}\geq \sin\frac\theta2\implies
\langle\sigma,\wu\rangle\in [0,\frac{\pi}2-\frac{\theta}{2}].
\]
Therefore, 
by \eqref{4.6}, trig identities and $\sin \langle\sigma,\wu\rangle\leq \cos\frac\theta2$,
\begin{align*}
\cos\left(\langle\sigma,\wu\rangle + \frac{\theta}{2}\right) &= \frac{1}{2}\sin\theta (1 + \cos \theta (1 + \cos \omega)) - \sin \langle\sigma,\wu\rangle \sin \frac{\theta}{2}\\
    &\geq \frac{1}{2}\sin\theta  \cos \theta (1 + \cos \omega).
\end{align*}
Thus, when $\omega\leq \frac{3}{4}\pi$, then
\[
\eta_\sigma=\cos\left(\langle\sigma,\wu\rangle + \frac{\theta}{2}\right)\geq \frac12\sin\frac\pi4\sin\beta(1-\cos\frac{3}{4}\pi) \geq 0.05\sin\beta
\]
and
\beq\lb{4.66}
\frac\pi2-\langle \sigma,\widetilde{\mu}\rangle-\frac{\theta}{2}\geq \cos\left(\langle\sigma,\wu\rangle + \frac{\theta}{2}\right)\geq 0.05\sin\beta. 
\eeq

Now, let $\mu^1$ be a unit vector pointing to the direction of 
\[
\sigma_1-(\sigma_1\cdot e)e\in \text{span}\{{e}, \widetilde{\mu}\}.
\]
Then $\mu^1\perp {e}$. 
Set $\bar{e} := e+\eps\beta\kappa \mu^1$ for some $\eps\in(0,\frac12)$ to be determined. We will use $\bar\mu$ as the new axis for the larger cone (contained in $S_{\kappa,\alpha}$) that we will define.

For an arbitrary vector $\nu$ such that $\nu\cdot e>0$, we define $\omega_\nu$ to be the solid angle between the planes $\text{span}\{{e}, \nu\}$ and $\text{span}\{{e}, \widetilde{\mu}\}$. It is direct to see that, since $\sigma_1$ is the nearest generatrices of $W_{\theta/2,e}$ to $\wu$, $w_\nu$ really denotes the angle between the projection of $\sigma_1$ to $e^\perp$ and the projection of $\nu$ to $e^\perp$.

Now, consider $\sigma\in \partial W_{\theta/2, {e}}  $ such that $\omega_\sigma\leq \frac{3}{4}\pi$ (see Figure \ref{fig2}). Then, by \eqref{4.66},
\[
E_\sigma=\frac{\pi}{2}-\frac{\theta}2-\langle \sigma,\widetilde{\mu}\rangle\geq 0.05\sin\beta\geq\alpha
\]
if we pick $c_4$ to be small enough.
Then, by the definition of $S_{\kappa,\alpha}$,
\[
\bar{\sigma}: = \sigma + \rho_\sigma\xi\in {S_{\kappa,\alpha}},
\]
where $ \sigma$ is such that $\sigma_\omega\leq\frac{3}{4}\pi$ and $\xi$ is any unit vector.

Since $\langle\sigma, e\rangle=\frac\theta2$ and for a universal constant $\eps_1>0$ (independent of $\beta,\kappa$) we have
\[
\rho_\sigma\geq |\nu|\sin\frac\theta2(1+0.05\kappa\sin\beta)\geq |\nu|\sin(\frac{\theta}{2}+2\eps_1\kappa\beta),
\]
then
\[
W_{\theta+2\eps_1\kappa\beta,e}\bigcap \{\nu:\omega_\nu\leq \frac{3}{4}\pi\}\subseteq S_{\kappa,\alpha}.
\]
Thus, we can fix $\eps$ to be sufficiently small depending only on $\eps_1$ so that
\[
W_{\theta+\eps_1\kappa\beta,\bar e}\bigcap \{\nu:\omega_\nu\leq \frac{3}{4}\pi\}\subseteq S_{\kappa,\alpha}.
\]

\begin{figure}[t]
\label{fig2}
    \centering
    \includegraphics[scale=0.25]{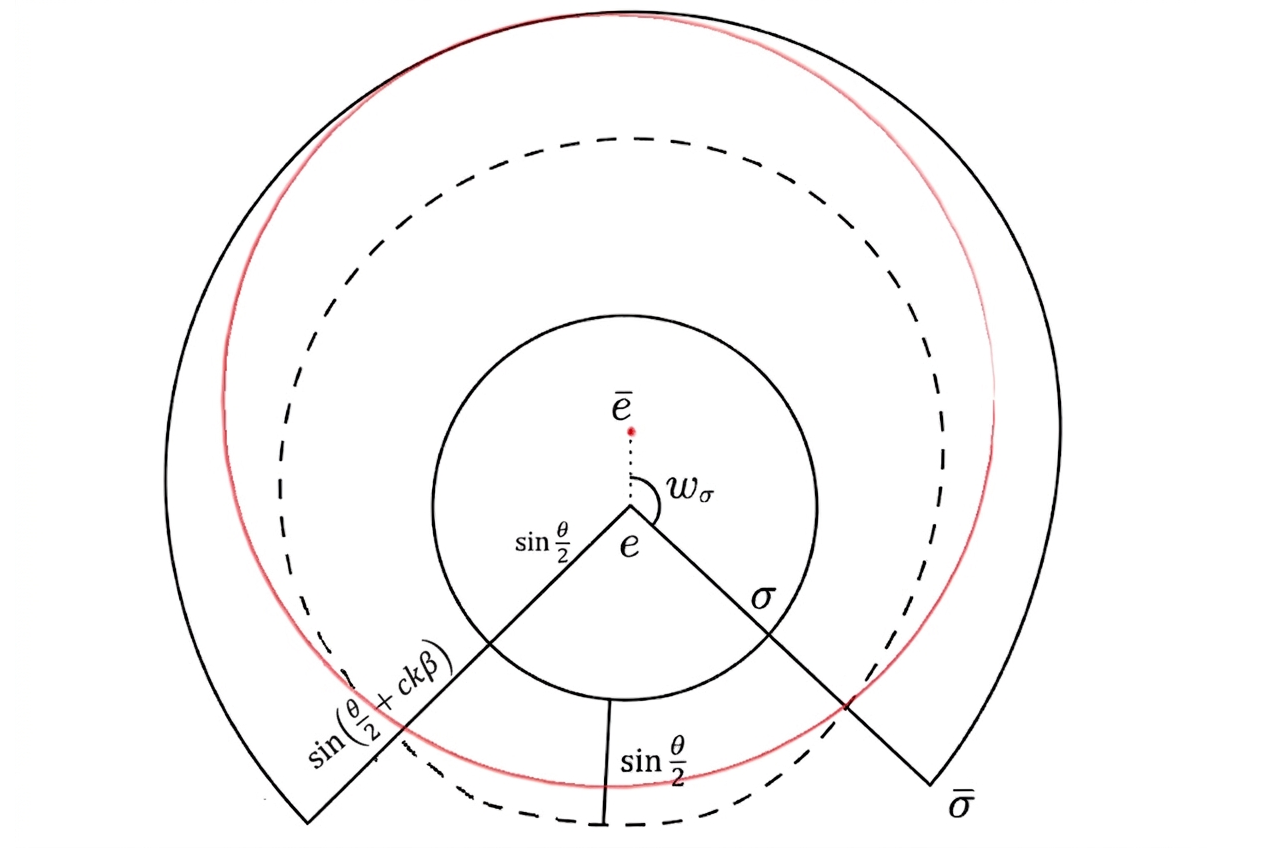}
\caption{The enlarged cone}
\end{figure}

As for the vectors satisfying $\omega_\nu> \frac{3}{4}\pi$, we use that $W_{\theta,e}\subseteq S_{\kappa,\alpha}$. Indeed, for some $\eps_2>0$ to be determined, take any $\nu\in W_{\theta+\eps_2\kappa\beta,\bar e}$ with $\omega_\nu> \frac{3}{4}\pi$. Since the angle between the projection of $\sigma_1$ to $e^\perp$ (which is $\mu_1$) and the projection of $\nu$ to $e^\perp$ is larger than $\frac34\pi$, it follows that
\[
\nu\cdot\mu^1= (\nu-(\nu\cdot e)e)\cdot \mu^1\leq |\nu| \sin(\theta+\eps_2\kappa\beta)\cos\frac34\pi\leq -\frac12|\nu|.
\]
Consequently, if taking $\eps_2$ sufficiently small compared with $\eps$,
\begin{align*}
\nu\cdot  e &=\nu\cdot \bar e+(\eps\beta\kappa)\nu \cdot\mu^1\\
&=(\cos(\theta+\eps_2\kappa\beta)\sqrt{1+\eps^2\beta^2\kappa^2}- \eps\beta\kappa/2 )|\nu|\\
&\leq (\cos(\theta+\eps_2\kappa\beta)+\eps^2\beta^2\kappa^2/2- \eps\beta\kappa/2 )|\nu|\\
&\leq \cos\theta|\nu|.    
\end{align*}
This shows that $\nu\in W_{\theta,e}$ and so $W_{\theta+\eps_2\kappa\beta,\bar e}$ is indeed a subset of $S_{\kappa,\alpha}$ and $\bar\theta:=\theta+\eps_2\kappa\beta$ satisfies
\[ 
\frac{\pi}{2} - \bar{\theta} \leq (1 -\eps_2\kappa) \left( \frac{\pi}{2} - \theta \right). 
\]

\end{proof}

\subsection{Proof of Theorem \ref{T.8.4}}

We will iterate Lemma \ref{L.8.3} to get that the free boundary is $C^1$.

Let $(0,0)\in\Gamma_u$ and let $\widetilde u(x,t):=\tfrac1{\kappa}u( \kappa x+X(\kappa t),\kappa t)$ for some $\kappa>0 $ sufficiently small. Then, we define $\mu_0:=-e_d$, and let $K\geq 2$ from Lemma \ref{Lipdomain}. For each $n\geq 0$,
\[
u_n(x,t):={\gamma_n}{K^{n}}\widetilde u(K^{-n} x, K^{-n}\gamma_n t)
\quad\text{ where }
\]
\[
\gamma_n:=
\begin{cases}
\left[K^n\widetilde u(K^{-n}\tfrac{3}{4}\mu_n,0)\right]^{-1} &\text{ if }\widetilde u(K^{-n}\tfrac{3}{4}\mu_n,0)\leq K^{-n},\\
1 &\text{ otherwise.}
\end{cases}
\]
Here $\mu_n$ is a unit vector which will be defined via iteration. It is direct from the definition that
\[
\gamma_n\leq 1\quad\text{and}\quad
u_n(\tfrac{3}{4}\mu_n,0)\leq 1.
\]
It is also clear by the assumption that $u_0$ is non-decreasing along all directions in $W_{\theta_0,\mu_0}$ with $\theta_0:=\theta$ in $\calQ_1$. We  
set $\tau_0:=1$, $\theta_{-1}:=\theta$.

Recall the constants $c_0,c_2>0$ from Lemma \ref{L.7.4} and $c_4,c_5>0$ from Lemma \ref{L.cone}. Suppose that we have defined $\mu_k,\tau_k,\theta_k$ for $k=0,1\ldots,n$ and they satisfy the following properties:
\begin{enumerate}
    \item $u_k$ is non-decreasing along all directions in $W_{\theta_k,\mu_k}$ in $B_1\times (-\tau_k,\tau_k)$;
\smallskip



\item $\tau_k\geq c_2c_4(\pi-2\min\{\theta_{k-1},\theta_k\})$;

\smallskip

\item    Writing  $\delta_k:=\pi-2\theta_k$, then 
either 
    \[
\delta_{k}\leq CK^{-k+1}\quad\text{ or}\quad         \delta_{k}\leq \delta_{k-1}(1-c_5\delta_{k-1})
    \]
for some $C>0$ independent of $k$.
\end{enumerate}
The goal is to show (1)--(4) for $k=n+1$. 

\smallskip

First of all, let
\[
\alpha:=c_4 (\pi-2\theta_n)
\]
If the opening $\theta_n$ is already large such that $c_2\alpha/2\leq C\kappa K^{-n}$, then we simply define
\[
\mu_{n+1}:=\mu_n,\quad \theta_{n+1}:=\theta_n\quad\text{and}\quad \tau_{n+1}:=\tau_{n}.
\]
Note that
\[
u_{n+1}=\frac{K\gamma_n}{\gamma_{n+1}}u_n(\frac{x}{K},\frac{\gamma_n}{K\gamma_{n+1}}t),
\]
and $\frac{\gamma_n}{K\gamma_{n+1}}\leq 1$ by Lemma \ref{Lipdomain}. Thus, the conditions (1)--(3) hold the same.

Without loss of generality, we can assume $c_2\alpha/2\geq C\kappa K^{-n}$. 
We define
\[
\widetilde{\mu}:=\nabla u_n(\tfrac{3}{4}\mu_n,0)/|\nabla u_n(\tfrac{3}{4}\mu_n,0)|\quad\text{and}\quad \tau:=c_2\alpha.
\]
Then $\tau= c_2c_4(\pi-2\theta_n)\leq \tau_n$.
For any $\nu\in W_{\theta_n/2,\mu_n}$ satisfying $|\nu|<c_0$, set
\[
\delta_\nu:=|\nu|\sin(\theta_n/2)\quad\text{and}\quad
\eta_\nu :=c_2\cos(\langle \widetilde{\mu},\nu\rangle+\tfrac{\theta_n}2).
\]
Consequently, if $\nu$ is such that $\alpha\leq \tfrac{\pi}{2}-\tfrac{\theta_n}{2}-\langle \widetilde{\mu},\nu\rangle$, then
\[
\eta_\nu\geq c_2\cos(\tfrac\pi2-\alpha)\geq c_2\alpha/2.
\]

Recall $c_2\alpha/2\geq C\kappa K^{-n}$ and \eqref{nondegn}. We have
\[
\eta_\nu\geq C\kappa K^{-n},\quad  u_n(\tfrac{3}{4}\mu,0)\leq 1,
\]
and the non-degeneracy property of $u_n$ holds easily by that of $u$.
Therefore, if
\beq\lb{4.77}
\alpha\leq \tfrac{\pi}{2}-\tfrac{\theta_n}{2}-\langle \widetilde{\mu},\nu\rangle,
\eeq
it follows from Lemma \ref{L.8.3} that for some $c_3>0$, 
\[
\sup_{y\in B_{(1+c_3\alpha\eta_\nu\gamma)\delta_\nu}(x,t)}u_n(y-\nu,t) \leq u_n(x,t)\quad\text{ in } B_{1/2}\times [-\tau/2,\tau].
\]
Then $u_n$ is non-decreasing along all the directions of
\[
S_{n,\alpha}^{\nu}:=\bigcup_{\nu\in W_{\theta_n/2,\mu_n}}B_{(1+c_3\gamma\alpha\eta_\nu}(\nu)\quad \text{ in }B_{1/2}\times [-\tau/2,\tau],
\]
for all $\nu\in W_{\theta_n/2,\mu_n}$ satisfying \eqref{4.77}.
Hence, $u_{n+1}$ is non-decreasing along all the directions of $S_{n,\alpha}^{\nu}$ with $\nu\in W_{\theta_n/2,\mu_n}$ satisfying \eqref{4.77} for 
\[
(x,t)\in B_{K/2}\times [-\tfrac{K\gamma_{n+1}}{2\gamma_{n}}\tau,\tfrac{K\gamma_{n+1}}{2\gamma_{n}}\tau]\subseteq B_{1}\times [-\tau,\tau],
\]
where we used Lemma \ref{Lipdomain} in the inclusion.

So, we can  apply Lemma \ref{L.cone} to get that  there exists $\mu_{n+1}$ and $\theta_{n+1}>\theta_n$ such that in $B_{1}\times [-\tau,\tau]$,
\begin{enumerate}
    \item $u_{n+1}$\text{ is non-decreasing along all directions  of $W_{\theta_{n+1},\mu_{n+1}}$};
    \smallskip
    \item There exists $c_5>0$ such that for $\delta_n=\pi-2\theta_{n}$,
    \[
    \delta_{n+1}\leq \delta_n(1-c_5\delta_n).
    \]
\end{enumerate}
Finally, we take $\tau_{n+1}:=\tau$ and then
\[
\tau_{n+1}=c_2c_4(\pi-2\theta_n)\geq c_2c_4(\pi-2\theta_{n+1}).
\]
Hence, we conclude (1)--(3) with $k$ replaced by $n+1$.

\smallskip

By induction, we conclude that $u$ is non-decreasing along all directions in $W_{\theta_n,\mu_n}$ in $B_{K^{-n}}\times (-K^{-n}\gamma_n\tau_n,K^{-n}\gamma_n\tau_n)$. By an argument of induction and (3), we obtain that
\[
\delta_n\leq \tfrac{C}{n}\quad\text{ for some }C>0.
\]
This shows that the free boundary $\Gamma_u(0)$ is $C^1$ at $0$.

Moreover, if $\Gamma_u(0)$ is given by a graph $F(x')=x_d$ locally near the origin, then $F$ is differentiable and for $(x',F(x')),(y',F(y'))\in B_{K^{-n}}$, we have
\[
|\nabla F(x')-\nabla F(y')|\leq C/n.
\]
This yields a continuity regularity of $F$:
\[
|\nabla F(x')-\nabla F(y')|\leq C (-\log_K |x'-y'|)^{-1}.
\]


\section{Periodic vertical advection Hele-Shaw}\lb{S5}


Consider the periodic domain $(x,y,t)\in \bbT\times \bbR\times [0,\infty)$, and set the vector field $b=(0,\Theta(x))$ for some periodic function $\Theta$. Then $b$ is divergence free and the equation \eqref{1.1} becomes
\begin{equation}\lb{2.6}
\begin{cases}
\Delta u = 0 \quad \text{ in } \{u>0\}, \\
\partial_t u=|\nabla u|^2+\Theta(x) \partial_yu \quad \text{ on }\partial\{u>0\}.
\end{cases}
\end{equation}

We extend $u$ and $\Theta$ periodically to all $x \in \mathbb{R}$. In what follows, we shall identify these functions with their periodic extensions without further comment.

\subsection{Equation of the free boundary}
We study a class of front propagation solutions $u$ such that for some $c_*>0$,
$u(x,y,t)$ behaves like $-c_*y$  as $y\to -\infty$,
and the free boundary $\Gamma_u(t)$ is given by a graph $y=F(x,t)$.

Let us start with introducing the equation of the free boundary.
Suppose that the ``fluid domain'' is given by 
\[
\Omega_{F,t}:=\{(x,y),\, y<F(x,t)\}.
\]
For each $t\geq 0$, let $\phi^t(x,y)$ satisfy
\begin{equation}\lb{a.1.1}
\begin{cases}
\Delta\phi^t = 0 \quad in \ \Omega_{F,t}, \\
\phi^t(x, F(x,t)) =F(x,t), \quad \nabla\phi^t \in L^2(\Omega_{F,t}).
\end{cases}
\end{equation}
Then $u(x,y,t):=c_*(\phi^t(x,y)- y)_+$ is expected to be the solution to \eqref{2.6} if the free boundary $F$ evolves ``correctly''.
It was proved in  \cite[Proposition 3.6]{np20} and \cite[Proposition 2.6]{dong21} that \eqref{a.1.1} has a unique variational solution when $F(\cdot,t) \in \text{Lip}(\mathbb{T})$, and
\begin{equation*}
\|\phi^t\|_{\dot{H}^1(\Omega_{F,t})} \le C\left(1 + \|F(\cdot,t)\|_{\text{Lip}(\mathbb{T}^d)}\right)\|F(\cdot,t)\|_{\dot{H}^{\frac{1}{2}}(\mathbb{T}^d)}.
\end{equation*}
Set $N^t(x):=(-F_x(x,t),1)$.
Following \cite{dong21}, we define
\beq\lb{calG}
\calG (F)(x,t):=\partial_{N^t}\phi^t(x,F):= \lim_{h \to 0^-} \frac{1}{h} \left[ \phi^t((x, F) + hN^t(x)) - \phi^t(x, F) \right].
\eeq
In particular, $\calG (F+C)=\calG (F)$ and 
if $F(\cdot,t)$ is constant, then $\calG (F)(\cdot,t)\equiv 0$.

It follows from \eqref{2.6} that the boundary velocity to the outer normal direction (and multiplied by $|N^t|$) is given by
\begin{align*}
N^t(x)\cdot V(x,F(x,t))&=-N^t(x)\cdot ((\nabla \phi^t)(x,F(x,t))-(0,c_*))-N^t(x)\cdot (0,\Theta(x))\\
&=-c_*\calG (F(\cdot,t))(x)+c_*-\Theta(x).    
\end{align*}
Since $\partial_t F(x,t)=N^t(x)\cdot V(x,F(x,t))$, we obtain the equation for the free boundary variable as follows
\beq\lb{main}
\partial_t F(x,t)=-{c_*}\calG (F(\cdot,t))(x)+c_*-\Theta (x).
\eeq

It was proved in \cite{dong21} and \cite{dong23} the global well-posedness and Lipschitz regularity of
$F_t=- {c_*}\calG (F)$
in dimension $2$ and $3$, respectively. The global well-posedness in general dimension was proved in \cite{schwab2024well}.

\subsection{Viscosity solutions of the boundary variable}

The following definition is given in \cite[Definition 6.1]{dong21}.
\begin{definition}{\rm [Viscosity solutions  of the boundary variable]}\lb{D.5.1}
A function $F : \mathbb{T} \times [0,T]\to\bbR$ is called a viscosity subsolution (resp. supersolution) of \eqref{main} if
\begin{itemize}
    \item[\textit{(i)}] $F$ is upper semicontinuous (resp. lower semicontinuous) on $\mathbb{T} \times [0,T]$;
    \smallskip
    \item[\textit{(ii)}] for every $\psi : \mathbb{T} \times (0,T) \to \mathbb{R}$ with $\partial_t\psi \in C(\mathbb{T} \times (0,T))$ and $\psi \in C((0,T); C^{1,1}(\mathbb{T}))$, if $F - \psi$ attains a global maximum (resp. minimum) over $\mathbb{T} \times [t_0 - r, t_0]$ at $(x_0, t_0) \in \mathbb{T} \times (0,T)$ for some $r > 0$, then
    \begin{equation}\lb{a.1.2}
 \partial_t\psi(x_0, t_0)\leq -{c_*}\calG (\psi)(x_0)+c_*- \Theta(x_0)  \quad (\textit{resp. } \ge).
\end{equation}
\end{itemize}
A viscosity solution is both a viscosity subsolution and viscosity supersolution.
\end{definition}

We next show that, under certain conditions,  the notion of solutions given in Definition \ref{D.5.1} is consistent with the viscosity solutions given in Definition \ref{D.21}.

\begin{lemma}{\rm [Consistency]}\lb{L.5.2}
Let $F : \mathbb{T} \times [0,T]\to\bbR$   be a viscosity solution to \eqref{main} and let $u(x,y,t):=(\phi^t(x,y)-c_* y)_+$ with $\phi^t$ given by \eqref{a.1.1}. Then if either $F_t\leq C$ for some $C>0$ or $u$ is non-degenerate, we have $u$ is a viscosity solution to \eqref{2.6}. In particular, $\Omega_{F,t}=\Omega_u(t)$.
\end{lemma}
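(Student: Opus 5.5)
Note first that $u$ coincides with $c_*(\phi^t-y)_+$, and in the fluid region $u=c_*(\phi^t-y)$ with $\phi^t$ harmonic. So $-\Delta u=0$ in $\{u>0\}$ holds classically. The whole content of the lemma is therefore the free boundary condition in Definition \ref{D.21}, together with the identification $\Omega_u(t)=\Omega_{F,t}$.

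\textbf{Step 1: identifying the positive set.} On $y=F(x,t)$ we have $\phi^t-y=0$. Since $\phi^t-y\sim -y\to+\infty$ as $y\to-\infty$, we want $\phi^t-y>0$ in $\Omega_{F,t}$. I would obtain this from a Hopf/maximum principle argument in $\Omega_{F,t}$, using $\partial_{N^t}(\phi^t - y)$ along the boundary, i.e. $\calG(F)<1$. Alternatively I would use the hypothesis directly: non-degeneracy gives positivity near the graph, and $F_t\le C$ gives positivity via the comparison argument below. Once the sign is settled, $\Gamma_u(t)$ is exactly the graph of $F(\cdot,t)$.

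\textbf{Step 2: subsolution at a free boundary point.} Let $u-\phi$ attain a local max at $(x_0,y_0,t_0)\in\Gamma_u$ for $t\le t_0$, with $-\Delta\phi>0$ there. I want to build a test function $\psi$ for $F$ touching from above, so that \eqref{a.1.2} applies.
\begin{enumerate}
\item Since $\phi\ge u\ge 0$ near the point and $\phi=0$ there, the zero level set $\{\phi=0\}$ lies above the graph of $F$ locally.
\item If $\partial_y\phi\neq0$, the implicit function theorem gives a $C^{1,1}$ graph $\psi$ with $\psi\ge F$ and $\psi(x_0,t_0)=F(x_0,t_0)$.
\item I would localize $\psi$ (glue it to $F$ plus a large constant away from $x_0$) so that the max becomes global on $\mathbb{T}\times[t_0-r,t_0]$.
\item Comparison for the Dirichlet problem \eqref{a.1.1} on the nested domains $\Omega_{F}\subset\Omega_\psi$ relates $\calG(\psi)(x_0)$ to the normal derivative of $u$, and hence of $\phi$, at the touching point. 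Here I would use $\psi$ replaced by a strictly superharmonic perturbation, which is where $-\Delta\phi>0$ enters.
\item Then $\partial_t\psi=-\phi_t/\phi_y$, and \eqref{a.1.2} translates into $\phi_t\le|\nabla\phi|^2+\Theta\phi_y$.
\end{enumerate}

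\textbf{Step 3: supersolution.} This is analogous, with minima, using $|\nabla\phi|\ne0$ and $-\Delta\phi<0$. The difference is that the touching from below by $\{\phi=0\}$ must be converted into touching of $F$ from below.

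\textbf{Main obstacle.} The hard point is the degenerate case $\partial_y\phi=0$, or gradients too small to control, in the subsolution step. Definition \ref{D.21} does not exclude $\nabla\phi=0$ for subsolutions. Non-degeneracy rules this out: $u(x_0,y_0-\eps)\ge c\eps$ forces $\phi_y\le -c$. Without non-degeneracy, the assumption $F_t\le C$ is used instead. With it, a vertical touching cannot occur, since the free boundary cannot move up faster than $C$. Equivalently, if $\nabla\phi=0$ we need $\phi_t\le0$, and this follows because $u$ vanishes at $(x_0,y_0,t)$ for $t<t_0$ close by, the region having been dry. I would check this time-monotonicity carefully, since it is the step the two alternative hypotheses are designed to handle.
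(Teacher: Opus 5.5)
Your architecture matches the paper's. Harmonicity in the positive set is classical. The implicit function theorem turns the zero set of the test function into a graph touching $F$ from above, and \eqref{a.1.2} is applied to that graph. A vanishing gradient is handled by non-degeneracy or by $F_t\le C$. In Step 1 you need neither $\calG(F)<1$, which is not assumed, nor the hypotheses: $\phi^t$ is bounded, so the minimum principle on a truncated periodic strip already gives $\phi^t-y>0$ in $\Omega_{F,t}$.

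\textbf{The main gap: Step 2(4) compares in the wrong direction.} Let $u_\psi:=c_*(\phi_\psi-y)$, with $\phi_\psi$ solving \eqref{a.1.1} for the graph $\psi$. Then $-c_*\calG(\psi)(x_0)+c_*=|N|\,|\nabla u_\psi(x_0,y_0)|$. Your item 5 then turns \eqref{a.1.2} into $\phi_t\le|\nabla\phi|\,|\nabla u_\psi|+\Theta\phi_y$. To conclude you need the \emph{upper} bound $|\nabla u_\psi(x_0,y_0)|\le|\nabla\phi(x_0,y_0,t_0)|$. The nested domains $\Omega_F\subset\Omega_\psi$ give $u\le u_\psi$, which is a lower bound, and combined with $u\le\phi$ it yields nothing. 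For the same reason item 3 is counterproductive. Lifting $\psi$ to $F$ plus a large constant away from $x_0$ enlarges $\Omega_\psi$ and increases $|\nabla u_\psi(x_0,y_0)|$, which only weakens \eqref{a.1.2}.

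\textbf{What is needed.} You must compare $u_\psi$ directly with the superharmonic $\phi$; this is what the paper does (its $\varphi_1$ harmonic against $\varphi\ge u$ superharmonic). This forces $\psi$ to hug $F$. One rigorous version, with $z_0:=(x_0,y_0)$:
\begin{enumerate}
\item Replace $\phi$ by $\phi+\eps|z-z_0|^2+\eps(t_0-t)$. This makes the contact strict and is still superharmonic near $z_0$, precisely because $-\Delta\phi>0$.
\item Take $\psi$ equal to the zero level set of the new $\phi$ near $x_0$, and squeezed between $F$ and $\min\{F+\eps',\text{level set}\}$ elsewhere.
\item Since $u_{F+\eps'}(x,y)=u_F(x,y-\eps')$, you get $u\le u_\psi\le u(\cdot,\cdot-\eps')$.
\item For $\eps'$ small, the strict margin gives $u_\psi\le\phi$ on $\partial B_\delta(z_0)$, hence on $B_\delta(z_0)\cap\{y<\psi\}$ by the maximum principle, and the gradient bound follows.
\end{enumerate}
Step 3 needs the mirrored correction.

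\textbf{The degenerate case lacks the quantitative step.} That $u$ vanishes at the single point $(x_0,y_0,t)$ for $t<t_0$ contradicts nothing. Suppose $\nabla\phi=0$ and $\phi_t>0$. Taylor's formula gives $\phi(\cdot,t_0-s)<0$ on a ball of radius $\rho_s\sim\sqrt{s}$ around $(x_0,y_0)$. Hence $F(x_0,t_0-s)\le y_0-\rho_s$ while $F(x_0,t_0)=y_0$. Since $\rho_s/s\to\infty$, this contradicts $F_t\le C$.

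Finally, a horizontal normal ($\partial_y\phi=0\neq\nabla\phi$) is ruled out by the spatial Lipschitz bound of Theorem \ref{T.2.5}, not by $F_t\le C$.
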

\begin{proof}
By the above definition of $u$, $u(\cdot,t)$ is harmonic and $u$ is continuous, $u(\cdot,t)>0$ in $\Omega_{F,t}$, and $u(\cdot,t)=0$ on $\partial \Omega_{F,t}$. It remains to verify the boundary condition. We will only prove that $u$ is a viscosity subsolution and the fact that $u$ is a viscosity supersolution follows similarly.

Let $\varphi\in C^{2,1}_{x,t}$ such that $u-\varphi$ has a local maximum at $(x_0,y_0,t_0)$ in $\bigcup_{t\leq t_0}\Omega_{F,t}\times\{t\}$, and $x_0\in \Gamma_{F,t_0}$ and $\varphi(x_0,y_0,t_0)=0$.  
In the case when $u$ is non-degenerate, then $\varphi$ is non-degenerate and so, in particular, $\nabla\varphi(x_0,y_0,t_0)\neq 0$.

\medskip

\noindent{\bf Step 1.} In this step, we assume that $\nabla\varphi(x_0,y_0,t_0)\neq 0$. To show that $u$ is a viscosity subsolution, it suffices to show 
\beq\lb{a.1.3}
V_0=\varphi_t/|\nabla\varphi|(x_0,y_0,t_0)\leq (|\nabla\varphi|+b\cdot\nabla\varphi/|\nabla\varphi|)(x_0,y_0,t_0).
\eeq
assuming this condition and that $-\Delta\varphi(x_0,y_0,t_0)>0$.
By modifying $\varphi$ outside a neighborhood of $(x_0,y_0,t_0)$, we can also assume that $u\leq \varphi$ globally for $t\leq t_0$ and $\varphi$ is superharmonic globally.

By the implicit function theorem, the $0$-level set of $\varphi$ is a $C^2$ curve near $(x_0,y_0,t_0)$ for each $t$ and is $C^1$ in time. Then $V_0$ is the same as the boundary velocity of the level set at $(x_0,y_0,t_0)$. Moreover, we can find a global $C^2$ function $G(x,t)$ such that $y=G(x,t)$ agrees with the level set near  $(x_0,y_0,t_0)$ and $G$ touches $F$ from above at the point. Thus, \eqref{a.1.2} yields
\[
 \partial_tG(x_0, t_0)\leq -{c_*}\calG (G)(x_0)+c_*- \Theta(x_0).
\]
Since $G$ is $C^2$, by the definition of $\calG$, this inequality says that the velocity of $y=G(x,t)$ at $(x_0,t_0)$ is no more than 
\begin{align*}
&-(N^{t_0}(x_0)/|N^{t_0}(x_0)|)\cdot (\nabla \varphi_1(x_0,y_0,t_0)+ (0,\Theta(x_0))\\
&\qquad =    |\nabla \varphi_1(x_0,y_0,t_0)|+\Theta(x_0) (\varphi_1)_y(x_0,y_0,t_0)/|\nabla \varphi_1(x_0,y_0,t_0)|,
\end{align*}
where $\varphi_1$ satisfies \eqref{a.1.1} with $F$ replaced by $G$. Note that $\nabla\varphi/|\nabla\varphi|=\nabla\varphi_1/|\nabla\varphi_1|$ which is equal to the inward normal direction, and since $\varphi\geq u$ is superharmonic and $\varphi_1$ is harmonic, we get $|\nabla\varphi_1|\leq |\nabla\varphi|$ at $(x_0,y_0,t_0)$. Hence, \eqref{a.1.3} is proved.

\medskip

\noindent{\bf Step 2.} Because of the previous arguments, it remains to show
\[
\varphi_t(x_0,y_0,t_0)\leq (|\nabla\varphi|^2+b\cdot\nabla\varphi)(x_0,y_0,t_0),
\]
assuming $F_t\leq C$ and $\nabla\varphi(x_0,y_0,t_0)=0$.

Assume for contradiction that $\varphi_t(x_0,y_0,t_0)>0$. Since $u\leq (\varphi)_+$ in $B_r(x_0,y_0)\times [t_0-r,t_0]$ for some $r>0$ sufficiently small and ${\nabla\varphi}(x_0,y_0,t_0)= 0$, there exists $\rho_s>0$ satisfying $\lim_{s\to 0}\rho_s/s=\infty$ such that
\[
u(x,y,t_0-s)\leq (\varphi(x,y,t_0-s))_+= 0
\]
for each $s\in [0,r]$ and $(x,y)\in B_{\rho_s}(x_0,y_0)$.
However, note that this implies that $F(x_0,t_0-s)\leq \rho_s$, while $F(x_0,t_0)=y_0$. Hence, this and $\lim_{s\to 0}\rho_s/s=\infty$ contradict with the assumption that $F_t\leq C$ and so we proved the claim.
\end{proof}

The following result is the comparison principle for the equation of boundary variables. The proof is almost identical to the one of \cite[Theorem 6.3]{dong21}, with a few adaptions.

\begin{proposition}{\rm[Comparison principle]}
Assume that $F, G : \mathbb{T}\times [0,T] \to \mathbb{R}$ are respectively a bounded viscosity subsolution and supersolution of \eqref{main}. If $F(x,0) \le G(x,0)$ for all $x \in \mathbb{T}$, then $F(x,t) \le G(x,t)$ for all $(x,t) \in \mathbb{T} \times [0,T]$.
\end{proposition}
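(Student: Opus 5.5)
We follow the classical doubling-of-variables argument of \cite[Theorem 6.3]{dong21}, using the structure of $\calG$: it is translation invariant ($\calG(F+C)=\calG(F)$), it commutes with horizontal shifts, and it obeys a global comparison principle. If $F_1\leq F_2$ everywhere with $F_1(x_0)=F_2(x_0)$, then $\calG(F_1)(x_0)\geq\calG(F_2)(x_0)$. This follows from the maximum principle for $\phi_i-y$ on the nested fluid domains. The extra term $c_*-\Theta(x)$ depends on $x$ only and is Lipschitz, so it is harmless as long as the doubling parameters are coupled correctly.

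\emph{Step 1 (strictification).} Suppose for contradiction that $\sup_{\mathbb T\times[0,T]}(F-G)>0$. We replace $F$ by $F_\lambda:=F-\lambda t$ for a small $\lambda>0$. Since $\calG$ ignores constants, $F_\lambda$ is a strict subsolution: \eqref{a.1.2} holds with an extra $-\lambda$ on the right. The supremum of $F_\lambda-G$ stays positive, and at $t=0$ we still have $F_\lambda\leq G$.

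\emph{Step 2 (doubling).} Consider
\[
\Phi(x,y,t,s):=F_\lambda(x,t)-G(y,s)-\frac{|x-y|^2}{2\eps}-\frac{|t-s|^2}{2\eps}.
\]
By semicontinuity and boundedness, $\Phi$ attains its maximum at some $(x_\eps,y_\eps,t_\eps,s_\eps)$. The standard estimates give $|x_\eps-y_\eps|^2/\eps+|t_\eps-s_\eps|^2/\eps\to0$. The initial ordering then forces $t_\eps,s_\eps>0$ for small $\eps$.

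The difficulty is that $\calG$ is nonlocal, so the quadratic test functions are not admissible in Definition \ref{D.5.1}, which needs a global maximum over $\mathbb T\times[t_0-r,t_0]$. We follow \cite{dong21} and build the test functions from the other function itself. Freezing $(y_\eps,s_\eps)$, the function
\[
\psi(x,t):=G(x-(x_\eps-y_\eps),\,t-(t_\eps-s_\eps))+\text{const}
\]
touches $F_\lambda$ from above at $(x_\eps,t_\eps)$. Since $\psi$ is not regular, we first regularize $F_\lambda$ and $G$. We take a sup-convolution of $F$ and an inf-convolution of $G$ in $x$ and $t$. By the Lipschitz dependence of $\Theta$ and the translation structure, these remain approximate sub- and supersolutions with an error $O(\|\Theta\|_{\Lip}\cdot\text{convolution radius})$, and they are semiconvex and semiconcave respectively. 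At a touching point they are then $C^{1,1}$ from one side. The definition is then applied with $C^{1,1}$ tests obtained from the regularized functions, exactly as in \cite{dong21}.

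\emph{Step 3 (conclusion).} Because the regularized $F_\lambda$ lies below the translated regularized $G$, with contact at the maximum point, the comparison property of $\calG$ gives $\calG(F)(x_\eps)\geq \calG(G)(y_\eps)$. The time derivatives coincide at the touching point. Subtracting the two viscosity inequalities yields
\[
\lambda\leq \Theta(x_\eps)-\Theta(y_\eps)+o(1)\leq \|\Theta\|_{\Lip}|x_\eps-y_\eps|+o(1)\to0,
\]
which is a contradiction. Letting $\lambda\to0$ gives $F\leq G$.

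The main obstacle is Step 2: justifying that the nonlocal operator can be evaluated at merely semicontinuous, regularized touching functions, and that the comparison inequality for $\calG$ survives the regularization. We expect to handle this by citing the corresponding lemmas of \cite{dong21} on $\calG$ for $C^{1,1}$-touching Lipschitz graphs, together with the observation that $\Theta$ only introduces an $x$-dependent Lipschitz forcing. That forcing is absorbed by choosing the convolution radii small relative to $\lambda/\|\Theta\|_{\Lip}$.
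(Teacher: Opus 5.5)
Your proposal takes the same route as the paper. The paper gives no argument beyond saying the proof is almost identical to \cite[Theorem 6.3]{dong21} with a few adaptations, and your strictification $F-\lambda t$, together with absorbing the $O(\|\Theta\|_{\Lip}\cdot\text{radius})$ error from the sup/inf-convolutions, is exactly the adaptation that the $x$-dependent forcing $c_*-\Theta(x)$ requires. One minor point: once you regularize, the doubling of variables is redundant, since you can maximize $F^\delta_\lambda-G_\delta$ directly and evaluate $\calG$ on the regularized functions at the common contact point. This does not affect correctness.
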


The Perron's method yields the existence of a unique bounded solution to \eqref{main}. Due to the appearance of the drift term, we are only able to obtain local Lipschitz continuity of $F$ in space.
\begin{theorem}{\rm[Wellposedness]}\lb{T.2.5}
Let $F_0,\Theta:\mathbb{T}\to\bbR$ be Lipschitz continuous. For all $T>0$, there exists
\[
F \in C(\mathbb{T} \times [0, T]) \cap L^\infty([0, T]; W^{1,\infty}(\mathbb{T}))
\]
such that $F$  is the unique viscosity solution to \eqref{main} with initial data $F_0$, and
\[
-\|F_0\|_\infty+\min_{x'\in\bbT}\{c_*-\Theta(x')\} T\leq F(x,t) \le \|F_0\|_\infty+\max_{x'\in\bbT}\{c_*-\Theta(x')\}T,
\]
\[
\|F(\cdot,t)\|_{\Lip}\le \|F_0\|_{\Lip}+\|\Theta\|_{\Lip}T.
\]

Furthermore, if
\beq\lb{m1}
\begin{aligned}
m_1:=\min_{x\in \bbT}\{-{c_*}\calG(F_0)(x) + c_* - \Theta(x)\}\in \bbR,
\end{aligned}
\eeq
then $F$ is Lipschitz continuous in $t$ and $F_t(x,t)\in [m_1, M_1]$ for all $(x,t)\in\bbT\times [0,T]$, where
\beq\lb{M1}
M_1:=\max_{x\in \bbT}\{-{c_*}\calG(F_0)(x) + c_* - \Theta(x)\}.
\eeq
\end{theorem}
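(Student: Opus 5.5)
We would prove Theorem \ref{T.2.5} along the lines of the Perron approach of \cite{dong21}, taking the comparison principle just stated as given. The plan has four steps, carried out in order.

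\textbf{Step 1: barriers.} Since $\calG$ annihilates constants, the functions
\[
F^\pm(x,t):=\pm\|F_0\|_\infty+t\,\max_{\bbT}/\min_{\bbT}\{c_*-\Theta\}
\]
are, respectively, a viscosity supersolution and a viscosity subsolution of \eqref{main}. At a touching point from above, $\calG(\psi)(x_0)\ge \calG(F^+)=0$ by the monotonicity (maximum principle) of $\calG$, and similarly from below. Moreover $F^-(\cdot,0)\le F_0\le F^+(\cdot,0)$.

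\textbf{Step 2: existence and uniqueness.} Perron's method produces
\[
F:=\sup\{G \text{ subsolution}: F^-\le G\le F^+\}.
\]
Its upper envelope $F^*$ is a subsolution and its lower envelope $F_*$ is a supersolution, by the standard bump construction. This uses the stability of $\calG$ under uniform convergence of $C^{1,1}$ test functions together with the comparison of $\calG$ for ordered functions touching at a point. Since $F^\pm$ are continuous at $t=0$ with the initial data, $F^*(\cdot,0)\le F_*(\cdot,0)$. The comparison principle then gives $F^*\le F_*$, so $F$ is continuous, and it is the unique solution. The two-sided bound on $F$ follows from $F^-\le F\le F^+$.

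\textbf{Step 3: spatial Lipschitz bound.} Fix $h\in\bbR$ and set $L_0:=\|F_0\|_{\Lip}$, $L_\Theta:=\|\Theta\|_{\Lip}$. Consider
\[
G(x,t):=F(x+h,t)+(L_0+L_\Theta t)|h|.
\]
Translation invariance of $\calG$ and invariance under adding constants show that $G$ solves
\[
G_t=-c_*\calG(G)+c_*-\Theta(x+h)+L_\Theta|h|\ \ge\ -c_*\calG(G)+c_*-\Theta(x).
\]
Hence $G$ is a supersolution with $G(\cdot,0)\ge F_0$, and comparison gives $F(x,t)\le F(x+h,t)+(L_0+L_\Theta t)|h|$. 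By symmetry in $h$ this yields the Lipschitz estimate $\|F(\cdot,t)\|_{\Lip}\le \|F_0\|_{\Lip}+\|\Theta\|_{\Lip}t$.

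\textbf{Step 4: time Lipschitz bound.} This is the main obstacle, since $\calG(F_0)$ is only a bounded function, given as the finite quantity in \eqref{m1}. Assuming $m_1,M_1$ are finite, consider
\[
F^1(x,t):=F_0(x)+M_1t,\qquad F^2(x,t):=F_0(x)+m_1t.
\]
As $\calG(F_0+ct)=\calG(F_0)$, we get
\[
F^1_t=M_1\ge -c_*\calG(F^1)+c_*-\Theta,
\]
so $F^1$ is a supersolution and likewise $F^2$ is a subsolution. Verifying these in the viscosity sense needs care, because $F_0$ is merely Lipschitz while test functions are $C^{1,1}$. For this we use the monotonicity of $\calG$: if $\psi$ touches $F^1$ from below at $x_0$, then $\calG(\psi)(x_0)\ge\calG(F_0)(x_0)$. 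Comparison then gives
\[
F_0+m_1t\le F(\cdot,t)\le F_0+M_1t .
\]
Applying the same argument to the time-shifted solution $F(\cdot,\cdot+s)$, again using comparison and the time-invariance of the equation, gives
\[
m_1 s\le F(x,t+s)-F(x,t)\le M_1 s,
\]
that is, $F_t\in[m_1,M_1]$.
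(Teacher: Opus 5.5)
Steps 1, 3 and 4 follow the paper's argument: the constant-speed barriers, the translated function $F(x+h,t)+(L_0+L_\Theta t)|h|$ as a supersolution, and the barriers $F_0+m_1t$, $F_0+M_1t$ followed by the time-shift comparison. Step 2, however, has a genuine gap: nothing in your construction ties the Perron solution to $F_0$. Your class $\{G \text{ subsolution}: F^-\le G\le F^+\}$ has no constraint at $t=0$. The Step 1 barriers satisfy $F^\pm(\cdot,0)=\pm\|F_0\|_\infty$, so the sentence ``$F^\pm$ are continuous at $t=0$ with the initial data'' is false. In fact $\|F_0\|_\infty+t\min_{\bbT}\{c_*-\Theta\}$ is a subsolution lying in your class. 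Hence your supremum satisfies $F(\cdot,0)\equiv\|F_0\|_\infty$, and you have built the solution with constant data $\|F_0\|_\infty$, not with data $F_0$. Before invoking comparison you need $F^*(\cdot,0)\le F_0\le F_*(\cdot,0)$, and this requires barriers that attain $F_0$ at $t=0$.

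To repair it, take the supremum only over subsolutions with $G(\cdot,0)\le F_0$, and supply such barriers; the paper uses $F_0\pm Ct$.

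\textbf{Lower barrier.} This one is harmless. Since $\phi-y\ge 0$ in the fluid region and vanishes on the graph, $\calG(\psi)\le 1$ for every admissible test function. So $F_0-\|\Theta\|_\infty t$ is a subsolution in the class, which gives $F_*(\cdot,0)\ge F_0$.

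\textbf{Upper barrier.} Be careful here: with $F_0$ merely Lipschitz, $F_0+Ct$ need not be a supersolution. Suppose $F_0$ has a convex corner at $x_0$, i.e.\ the fluid region has a reentrant corner there. Then $C^{1,1}$ functions touching $F_0$ from below and rounding the corner at scale $\delta$ have $\calG(\psi)(x_0)\to-\infty$ as $\delta\to 0$, so no fixed $C$ works. Instead, let $F_0^\eps$ be a mollification with $|F_0^\eps-F_0|\le L_0\eps$, where $L_0=\|F_0\|_{\Lip}$, and set $C_\eps:=\max_{\bbT}\{-c_*\calG(F_0^\eps)+c_*-\Theta\}<\infty$. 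Then $F_0^\eps+L_0\eps+C_\eps t$ is a supersolution, by exactly the monotonicity of $\calG$ you invoke in Step 4. Comparison places every member of the Perron class below it, so $F^*(\cdot,0)\le F_0+2L_0\eps$; letting $\eps\to 0$ gives $F^*(\cdot,0)\le F_0$.

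With this in place, the rest of your argument goes through as in the paper.
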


As a consequence of this result and Lemma \ref{L.5.2}, we will not distinguish the two notions of the solutions.


\begin{proof}
The existence is given by taking supremum of all subsolutions to \eqref{main} with initial data $\leq F_0$. It is direct to see that $F_0(x)+Ct$ and $F_0(x)-Ct$ for some $C>0$ are, respectively, a viscosity super- and a viscosity sub- solution to \eqref{main}. Therefore, the supremum of all subsolutions is equal to $F_0$ at $t=0$. The uniqueness is given by the comparison principle.

Next, let us define
\begin{align*}
&\psi_+(x,t):=\|F_0\|_\infty+\max_{x'\in\bbT}\{c_*-\Theta(x')\}\, t\\
&\psi_-(x,t):=-\|F_0\|_\infty+\min_{x'\in\bbT}\{c_*-\Theta(x')\}\, t.    
\end{align*}
It is clear that 
\[
\partial_t \psi_+ \ge -{c_*}\calG(\psi_+)+c_*-\Theta\quad\text{and}\quad \partial_t \psi_- \le -{c_*}\calG(\psi_-)+c_*-\Theta.
\]
By the comparison principle, we know that the solution $F$ satisfies
\[
\psi_-\leq F\leq \psi_+.
\]

To see that $F$ is Lipschitz continuous in space, we let $z\in (-1,1)$. 
Direct computation yields that $F(\cdot+z,t)$ is a viscosity solution to
\[
\partial_t F(x+z,t) = -{c_*}\calG(F(\cdot,t))(x+z)+c_*-\Theta(x+z).
\]
Therefore, 
\[
G(x,t):=F(x+z,t)+(\|F_0\|_{\Lip}+\|\Theta\|_{\Lip}t)|z|
\]
satisfies
\[
\partial_t G(x,t) = -{c_*}\calG(G(\cdot,t))(x)+c_*-\Theta(x+z)+\|\Theta\|_{\Lip}|z|,
\]
and so it is a viscosity supersolution to \eqref{main}. Since $G(x,0)\geq F(x,0)$, the comparison principle yields that
\[
F-F(\cdot+z,\cdot)\leq (\|F_0\|_{\Lip}+\|\Theta\|_{\Lip}T)|z|\quad\text{ in }\bbT\times [0,T].
\]

Now, assume \eqref{m1}. Let 
\[
\varphi_{m_1}(x,t):=F_0(x)+{m_1} t\quad\text{and}\quad\varphi_{M_1}(x,t):=F_0(x)+{M_1}t,
\]
and then
\[
\partial_t \varphi_{m_1}+{c_*}\calG(\varphi_{m_1})-c_*+\Theta= {m_1}+{c_*}\calG(F_0)-c_*+\Theta\leq 0,
\]
\[
\partial_t \varphi_{M_1}+{c_*}\calG(\varphi_{M_1})-c_*+\Theta= {M_1}+{c_*}\calG(F_0)-c_*+\Theta\geq 0.
\]
The comparison principle again yields 
\[
F_0(x)+{M_1}t\geq F(x,t)\geq F_0(x)+{m_1} t.
\] 
Thus, for any $s>0$, $F_1(x,t):=F(x,t+s)-{m_1} s$ satisfies that $F_1(x,0)\geq F(x,0)$. Since both $F_1$ and $F$ are solutions to \eqref{main} due to the translation invariant property of the operator, by the comparison principle again, we get
\[
F(x,t+s)\geq F(x,t)+{m_1} s.
\]
Similarly, by comparing $F_2(x,t):=F(x,t+s)-{M_1}s$ and $F(x,t)$, we can obtain $F(x,t+s)\leq F(x,t)+{M_1} s$. These yield the second conclusion.
\end{proof}


\subsection{Monotonicity properties}
Let $F$ be a solution to \eqref{main} and then $u$ be defined from Lemma \ref{L.5.2}.
We first show that $u$ is monotone in its support.

\begin{lemma}\lb{L.5.5}
Let $F(\cdot,t)$ be Lipschitz continuous with Lipschitz constant $c_L$. We have 
\[
\nabla_e u(\cdot,t)\leq 0\quad\text{for all $e=(e_1,e_2)$ such that $|e_1|\leq c_L e_2$}.
\]
\end{lemma}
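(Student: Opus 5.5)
Fix $t$ and write $\Omega:=\Omega_u(t)=\{(x,y):y<F(x,t)\}$ and $\phi:=\phi^t$. The plan is to compare $u(\cdot,t)$ with its own translates inside a downward cone and use the comparison principle for harmonic functions on the unbounded periodic domain. Let $e=(e_1,e_2)$ with $|e_1|\le c_Le_2$ (so $e_2\ge0$); by homogeneity it suffices to take $e_2>0$. The claim $\nabla_e u\le 0$ amounts to showing, for every $h>0$,
\[
u_h(x,y):=u\bigl((x,y)+he\bigr)\le u(x,y)\quad\text{in }\bbR^2 .
\]

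\textbf{Step 1 (geometry).} The set $\{u_h>0\}=\Omega-he$ is contained in $\Omega$. If $y<F(x-he_1)-he_2$, the Lipschitz bound gives $F(x)\ge F(x-he_1)-c_Lh|e_1|$, and $|e_1|\le 1$ with $|e_1|\le c_Le_2$ yields $c_L h|e_1|\le c_L he_2\cdot(|e_1|/ (c_L e_2))\cdot c_L\le he_2$ after checking the normalization. More cleanly: $(x,y)+he\in\Omega$ means $y+he_2<F(x+he_1)\le F(x)+c_Lh|e_1|\le F(x)+he_2$, hence $y<F(x)$. This is precisely where the cone condition $|e_1|\le c_Le_2$ enters, with the convention that it says $e$ lies in the cone whose half-opening has cotangent matching the Lipschitz slope. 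If the convention is reversed, I will state the lemma with the cone $\{|e_1|\le e_2/c_L\}$, which is the natural one.

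\textbf{Step 2 (comparison on $\Omega-he$).} Set $w:=u-u_h$ on $D:=\Omega-he$. Both functions are harmonic in $D$ and $1$-periodic in $x$. On $\partial D$ we have $u_h=0$ and $u\ge0$, so $w\ge0$. At $y\to-\infty$, write $u=\phi-c_*y$ with $\nabla\phi\in L^2$, where $\phi(\cdot,y)$ tends to a constant $\bar\phi$ exponentially fast by periodicity and harmonicity (Fourier expansion in $x$ below $\min F$). Then
\[
w=\phi-\phi_h+c_*he_2\to c_*he_2>0 .
\]
The maximum principle on the periodic strip truncated at $y=-R$, followed by letting $R\to\infty$, gives $w\ge0$ in $D$. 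Outside $D$ we have $u_h=0\le u$, so $u_h\le u$ everywhere, and differentiating in $h$ gives $\nabla_e u\le0$. If the normalization of $u$ is $c_*(\phi-y)_+$ instead, the same argument applies with the constants adjusted.

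\textbf{Main obstacle.} The delicate point is justifying the behavior at $-\infty$. Specifically, one must show that $\phi^t$ converges to a constant uniformly in $x$ as $y\to-\infty$. For this I would use the $\dot H^1$ bound together with interior gradient estimates on unit boxes, which give $|\nabla\phi|\to0$. The Fourier-mode decay $e^{2\pi k y}$ below $\min F$ then yields the uniform limit, so the maximum principle can be applied in the unbounded domain.
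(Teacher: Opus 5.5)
Your proof is correct and follows the same route as the paper: order $u$ and its translate $u(\cdot+he)$ on the translated free boundary using the Lipschitz bound, order them as $y\to-\infty$ using $u\approx -c_*y$, and conclude by the comparison principle for harmonic functions on the periodic domain. You supply details the paper leaves implicit (the subdomain $\Omega-he$, the truncation at $y=-R$, the decay of $\phi^t$), and your inequality $u(\cdot+he)\le u$ has the correct sign, whereas the paper's text states it reversed.

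The one point to tidy is Step 1. What you actually use is $c_L|e_1|\le e_2$. This follows from the stated hypothesis $|e_1|\le c_Le_2$ only when $c_L\le 1$, which covers the regime $c_L<1/\sqrt3$ where the lemma is used. The cone $c_L|e_1|\le e_2$ is also the one matching $W_{\theta_0,-e_2}$ with $c_L=\cot\theta_0$, which is how the lemma is later applied.
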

\begin{proof}
We fix $t$ and we drop it from the notations of $F$ and $u$. Writing $\vec z=(x,y)$, it suffices to show that for any $\eps>0$, $u(\vec z+\eps e)\geq u(\vec z)$. Indeed, since $F(\cdot)$ is Lipschitz continuous with Lipschitz constant $c_L$ and $u$ is positive below the graph of $y=F(x)$, then $u(\vec z+\eps e)\geq u(\vec z)$ on the graph. On the other hand, note that $u(x,y)=\phi^t(x,y)-c_*y$ where $\phi^t$ solves \eqref{a.1.1}. 
So $u((x,y)+\eps e)\geq u((x,y))$ as $y\to -\infty$. Hence, the comparison principle yields that $u(\vec z+\eps e)\geq u(\vec z)$ for all $\vec z$, which finishes the proof.
\end{proof}

If we further assume $m_1$ from \eqref{m1} to be large, then $u$ is strictly increasing along streamlines. 
In our case of $b(x,y)=(0,-\Theta(x))$, the streamline starting at $(x,y)\in\bbT\times\bbR$ is given by
\beq\lb{def.X}
X(t;x,y)=(x,y-\Theta(x)t).
\eeq
The notations $e_1$ and $e_2$ stand for the positive $x$ and $y$ directions, respectively. 

\begin{lemma}\lb{L.2.6}
Let ${m_1}$ from \eqref{m1} and further assume that
\beq\lb{detla1}
{m_1}+\min_{x\in\bbT}\Theta(x)\geq 0.
\eeq
Then for any $\eps>0$ and ${\delta}\in [0,{m_1}+\min_{x\in\bbT}\Theta(x)]$, we have
\[
\begin{aligned}
u(X(\eps;x,y)+{\delta} \eps e_2,t+\eps)
\geq u(x,y,t).
\end{aligned}
\]
\end{lemma}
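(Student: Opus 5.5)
The plan is to reduce the claim to a one-step comparison at a single time, using three ingredients: the explicit streamlines \eqref{def.X}, the lower bound $F(x,t+\eps)\geq F(x,t)+m_1\eps$ from Theorem \ref{T.2.5}, and the vertical monotonicity of Lemma \ref{L.5.5}. Fix $\eps>0$, $t$, and $\delta\in[0,m_1+\min\Theta]$. By \eqref{def.X},
\[
X(\eps;x,y)+\delta\eps e_2=(x,\,y+h(x)\eps),\qquad h(x):=\delta-\Theta(x).
\]
The choice of $\delta$ gives $h(x)\leq m_1$ for every $x$. Lemma \ref{L.5.5} applies with $e=e_2$ (the case $e_1=0$), so $u(\cdot,t+\eps)$ is non-increasing in the vertical direction. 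Hence
\[
u(x,y+h(x)\eps,t+\eps)\geq u(x,y+m_1\eps,t+\eps).
\]
This removes the $x$-dependent shift. It therefore suffices to prove
\beq\lb{goal}
u(x,y+m_1\eps,t+\eps)\geq u(x,y,t)\qquad\text{for all }(x,y)\in\bbT\times\bbR .
\eeq
Note that \eqref{detla1} is only used to make the range of $\delta$ nonempty. The bound $h\leq m_1$ already follows from $\delta\leq m_1+\min\Theta$.

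To prove \eqref{goal}, I would work with the harmonic functions $\phi^t$ of \eqref{a.1.1} rather than with $u$, using the normalization $u=c_*(\phi^t-y)_+$. Set
\[
\psi(x,y):=\phi^{t+\eps}(x,y+m_1\eps)-m_1\eps .
\]
Then $c_*(\psi(x,y)-y)=c_*\bigl(\phi^{t+\eps}(x,y+m_1\eps)-(y+m_1\eps)\bigr)$, which is the left side of \eqref{goal} before taking the positive part. The function $\psi$ is harmonic in $\{y<F(x,t+\eps)-m_1\eps\}$. By Theorem \ref{T.2.5} this set contains $\Omega_{F,t}$. So $D:=\psi-\phi^t$ is harmonic in $\Omega_{F,t}$. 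Once $D\geq0$ in $\Omega_{F,t}$ is known, \eqref{goal} follows there because $(\cdot)_+$ is monotone. Outside $\Omega_{F,t}$ the right side of \eqref{goal} vanishes, so there is nothing to prove.

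The boundary values of $D$ come for free. At $(x,F(x,t))$ we have $\phi^t=F(x,t)$. The shifted point $z=(x,F(x,t)+m_1\eps)$ lies in $\overline{\Omega_{F,t+\eps}}$, where $u(\cdot,t+\eps)\geq0$, i.e. $\phi^{t+\eps}(z)\geq z_y$. This gives $\psi(x,F(x,t))\geq F(x,t)$, hence $D\geq0$ on $\partial\Omega_{F,t}$.

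The step I expect to be the main obstacle is the behaviour at $y\to-\infty$. The domain is an unbounded periodic half-cylinder, and $D$ is only known to have $\nabla D\in L^2$. The maximum principle must therefore be justified there, and in particular one must rule out that $D$ tends to a negative constant at $-\infty$. My plan for this is as follows.
\begin{enumerate}
\item Show that $D$ is bounded. Both $\phi$'s have $L^2$ gradients and Lipschitz boundary data, so by standard estimates they are bounded and converge exponentially to constants as $y\to-\infty$; this follows by Fourier expansion in $x$ on $\bbT\times(-\infty,\min F-1)$.
\item Use uniqueness of bounded harmonic functions on the periodic half-cylinder with prescribed boundary data. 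The only other candidates are $a+by$, and these are unbounded unless $b=0$.
\item Conclude that $D$ equals the harmonic-measure-at-infinity representation of its boundary values, equivalently the Perron solution. That representation is nonnegative because the boundary values are.
\end{enumerate}
A more elementary alternative for the last step is to apply the maximum principle to $D+\eta y$ on truncated domains with $\eta\downarrow0$: the term $\eta y$ is very negative at the bottom, which controls the truncation boundary. This also gives $D\geq0$, and with it \eqref{goal} and the lemma.
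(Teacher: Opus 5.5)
Your argument is correct and is essentially the paper's proof. The paper uses the same three ingredients: $F(x,t+\eps)\geq F(x,t)+m_1\eps$ from Theorem \ref{T.2.5}, comparison of harmonic functions to get $u(x,y+m_1\eps,t+\eps)\geq u(x,y,t)$, and $u_y\leq 0$ from Lemma \ref{L.5.5} together with $\delta-\Theta(x)\leq m_1$; it applies them in the reverse order and simply invokes ``the comparison principle for harmonic functions'' where you spell out the behaviour as $y\to-\infty$.

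There is one sign slip in your elementary alternative. To conclude $D\geq 0$ you must apply the minimum principle to $D+\eta(\max_x F(x,t)-y)$, which is large and positive at the bottom, not to $D+\eta y$. Similarly, $\phi^{t+\eps}\geq y$ on $\overline{\Omega_{F,t+\eps}}$ comes from the minimum principle for $\phi^{t+\eps}-y$, which tends to $+\infty$ at the bottom; it does not follow from $u\geq 0$, which holds automatically for a positive part.
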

\begin{proof}
Because $F_t\geq {m_1}$, for all $\eps>0$, we have
\[
F(x,t+\eps)\geq F(x,t)+{m_1}\eps.
\]
So, by the comparison principle for harmonic functions, 
\[
u(x,y+{m_1} \eps,t+\eps)\geq u(x,y,t).
\]
Since $\delta-\Theta(x)\leq m_1$ and $u_y\leq 0$ by Lemma \ref{L.5.5}, we get 
\[
u(x,y+({\delta}-\Theta(x)) \eps,t+\eps)\geq u(x,y,t),
\]
which yields the conclusion.
\end{proof}


\subsection{Non-degeneracy}
In this section, we show non-degeneracy of the pressure variable. The results are essentially proved in \cite{CJK} for the case ${b}=0$ and \cite{kim2} for the general case. 
We will use the notation 
\[
\calQ_T:= \bbT\times\bbR\times (-T,T)\quad\text{ for some }T>0,
\]
and $M_1$ from \eqref{M1}.

\begin{lemma}\lb{C.6.4}
Let $u$ solve \eqref{1.1} in $\calQ_T$ for some $T\geq 1$, and let the free boundary be given by $F$. Assume that $F_0(0)=0$,  $ M_1 \asymp c_*$ and $c_*\geq 1$, and that
$u(\cdot,\cdot,t)$ is non-decreasing with respect to $W_{\theta,-e_2}$ for some $\theta \in (\frac\pi3, \frac\pi2)$
Then, there exists $a>0$ independent of $c_*$ such that in $\calQ_{T/2}$ we have
\[
-\partial_y u(x,y,t)\geq a\, c_*.
\]
\end{lemma}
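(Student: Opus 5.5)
We plan to prove the lower bound on $-\partial_y u$ in two steps. First we get a lower bound for $u$ at a fixed depth below the free boundary that is proportional to $c_*$. Then we use the cone of monotonicity to turn this into a gradient bound.

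\textbf{Step 1: lower bound at depth one.} Recall that $u(\cdot,t)=(\phi^t-c_* y)_+$. Since $u$ is harmonic in $\Omega_u(t)$ and behaves like $-c_*y$ as $y\to-\infty$, the function $u+c_*y$ is bounded and harmonic below the graph. We compare $u$ with planar solutions. Because $F(\cdot,t)$ is Lipschitz and $\bbT$-periodic, we have
\[
\operatorname{osc}F(\cdot,t)\leq C_\theta ,
\]
where $C_\theta$ depends only on the cone opening. (Monotonicity in $W_{\theta,-e_2}$ gives Lipschitz constant at most $\cot\theta<1/\sqrt3$ by Lemma \ref{L.5.5}.) Hence the domain $\Omega_u(t)$ contains $\{y<\min F\}$ and is contained in $\{y<\max F\}$.

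By comparison with the harmonic function $c_*(\min_x F(x,t)-y)_+$, which vanishes on $\{y=\min F\}$ and has the same asymptotics, the maximum principle (applied to the bounded difference on the half-cylinder) gives
\[
u(x,y,t)\geq c_*(\min F-y) \quad\text{for } y<\min F.
\]
In particular, $u\geq c_*$ at points whose vertical distance below $\min F(\cdot,t)$ is at least $1$. Similarly, $u\leq c_*(\max F-y)$, so $u\asymp c_*$ on a strip at unit depth.

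\textbf{Step 2: from the strip to a gradient bound.} The monotonicity along $W_{\theta,-e_2}$ with $\theta>\pi/3$ gives two facts.
\begin{itemize}
\item The level sets of $u$ are Lipschitz graphs with constant $\cot\theta$.
\item $\nabla u$ lies in the dual cone, so $-\partial_y u\geq \cos(\tfrac\pi2-\theta)|\nabla u|$.
\end{itemize}
So it suffices to bound $|\nabla u|$ from below by $a c_*$.

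Far from the boundary this is straightforward. For $y\leq \min F-1$, the function $u+c_*y$ is periodic, bounded and harmonic in the half-cylinder, so its gradient decays; this gives $-\partial_y u\geq c_*/2$ at depth $\gtrsim 1+C_\theta$.

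Near the boundary we argue by scaling. The function $w:=u/c_*$ is independent of $c_*$ in its elliptic structure, and it is positive harmonic in a Lipschitz domain with $w\asymp 1$ at unit depth. By Caffarelli's lemma (as used in \eqref{7.21}: $w/d\asymp|\nabla w|$ in Lipschitz domains with monotonicity cone) and the Harnack chain, we get $|\nabla w|\asymp w/d$. The remaining task is to show $w/d\gtrsim 1$ all the way up to the free boundary.

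\textbf{Main obstacle.} The key step, and the one I expect to be hardest, is the lower bound $w/d\gtrsim 1$ near the free boundary. For a general Lipschitz domain, $w/d$ can degenerate at inward corners (like $d^{\beta-1}$). The cone condition $\theta>\pi/3$ alone does not exclude this; the free boundary dynamics must be used.

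I would try to close this as in \cite{CJK, kim2}, using the speed bounds. Let $(x_0,F(x_0,t_0))$ be a free boundary point. Since $F_t\leq M_1\asymp c_*$, the point $(x_0,F(x_0,t_0)-h)$ entered the positive set at time at least $t_0-h/M_1$. Using Lemma \ref{L.2.6} (monotonicity along streamlines) and the lower speed $F_t\geq m_1$, we find that $u$ at that point grows.

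More concretely, we build a barrier. Take a radial harmonic subsolution supported in a ball of radius $h$ tangent from below, whose free boundary must move with speed $\lesssim M_1$. If the gradient of $w$ were smaller than $a$, the free boundary condition $u_t=|\nabla u|^2+\Theta u_y$ would force the boundary velocity to be at most $c_*(c_*a^2+\|\Theta\|_\infty a)/|\nabla u|$-scale, i.e. speed $\lesssim a c_*+\|\Theta\|_\infty$. This contradicts $F_t\geq m_1\gtrsim c_*$ (assumed via $M_1\asymp c_*$ together with the flatness giving $m_1\asymp c_*$) once $a$ is small and $c_*$ is large relative to $\|\Theta\|_\infty$.

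Making this rigorous in the viscosity sense means using the supersolution property at touching points of a sup-convolution. This is the step where I expect the technical difficulty to lie.
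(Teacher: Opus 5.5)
Your outer reduction is sound and matches the skeleton of the paper's proof. It has four pieces: bounds of order $c_*$ at a fixed depth by comparison with planar profiles, $-\partial_y u\geq \sin\theta\,|\nabla u|$ from the cone, Caffarelli's estimate $|\nabla u|\asymp u/d$ near the Lipschitz free boundary, and the maximum principle for $-\partial_y u$ in the interior.

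The gap is the step you flag yourself. The uniform non-degeneracy $u(x,y-\eps,t)\geq c\,c_*\eps$ at free boundary points is the entire content of the lemma, and the proposal does not prove it.

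\begin{itemize}
\item What you write there is the classical pointwise relation $|\nabla u|\sqrt{1+F_x^2}=F_t+\Theta$. By itself this says nothing on a Lipschitz free boundary, where $|\nabla u|$ need not have a boundary value. Degeneracy is exactly what happens at convex corners of $\Omega_u(t)$.
\item Turning it into an estimate requires a genuine quantitative comparison argument. You would have to propagate smallness of $u/d$ at one point to a space-time neighbourhood, then trap the free boundary under a slowly moving supersolution. The ``radial subsolution in a tangent ball'' you mention does not lead to the claimed contradiction, and no such argument is supplied.
\item Even formally, your heuristic needs $F_t+\Theta\gtrsim c_*$, i.e.\ $m_1+\min\Theta\gtrsim c_*$, which you attribute to flatness. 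Flatness does not give it. Formally $c_*(1-\calG(F_0))=|\nabla u(\cdot,0)|\,|N|$ on the free boundary, so $m_1\gtrsim c_*$ is itself an initial non-degeneracy assumption. In the application it comes from $\sup\calG(F_0)<1$.
\end{itemize}

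The paper does not prove non-degeneracy by hand; it imports it. It rescales to $v(x,y,t)=c_*^{-2}u(c_*x,c_*y,t)$, which solves the same problem with drift $c_*^{-1}\Theta(c_*x)$. Using the bounds on $F$ from $F_t\leq M_1\asymp c_*$ and comparison for harmonic functions, $v$ satisfies the two-sided bounds \eqref{3.2} at a fixed depth. The paper then checks the hypotheses of \cite[Theorem 7.5]{kimzhang2024}:
\begin{itemize}
\item (H-a') from the cone condition, via $\theta_\beta=\pi/(2\beta)$ in two dimensions. This is exactly where $\theta>\pi/3$ enters.
\item (H-b) from \eqref{3.2}.
\item (H-c) from the monotonicity along streamlines in Lemma \ref{L.2.6}.
\end{itemize}
This yields $v(x,y-\eps,t)\geq c_0\eps$ on $\Gamma_v\cap\calQ_{T/2}$. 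Only then do Caffarelli's lemma and the interior maximum principle finish the proof, as in your outline.

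So the missing ingredient in your proposal is that non-degeneracy theorem, or a real proof of non-degeneracy in its place. You are right that the cone condition alone cannot rule out degenerate corners. It is not irrelevant, though: $\theta>\pi/3$ is precisely what lets the dynamic non-degeneracy result apply.
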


Here $\theta>\frac\pi3$ is needed so that we can apply the results in \cite[Theorem 7.5]{kimzhang2024}.

\begin{proof}
Since $u$ is non-decreasing with respect to $W_{\theta,-e_2}$, we have $F_0\in( -\frac12,\frac12)$.
Then by Theorem \ref{T.2.5}, it follows that 
\[
-M_1T-1 \leq F(x,-T) \leq F(x,t) \leq F(x,T) \leq M_1T+1
\]
for all $t\in (-T,T)$. The comparison principle for harmonic functions yields
\[
 c_*(c_*(t+T)-y-M_1T-1)_+\leq u(x,y,t)\leq  c_*(c_*(t+T)-y+M_1T+1)_+.
\]
Hence 
\[
 c_*(c_*T+M_1T+1)\leq u(x,-c_*T-2M_1T-2,t)\leq  3c_*(c_*T+M_1T+1).
\]

Now, let $v(x,y,t):=c_*^{-2}u(c_*x, c_*y,t)$, which then satisfies
\[
\left\{
\begin{aligned}
-\Delta v& =0 && \text{ in }\{ x<\tilde F( y,t)\},\\
\partial_tv&=|\nabla v|^2+\widetilde\Theta( x) \partial_yv  &&\text{ on }\partial\{x=\tilde F(y,t)\},
\end{aligned}
\right.
\]
where $\widetilde F(x,t):=c_*^{-1}F(c_* x,t)$ and $\widetilde\Theta( x):=c_*^{-1}\Theta(c_* x)$. From the above,  for $y_0:=-T-\frac{2M_1T+2}{c_*}$ and for all $t\in (-T,T)$,
\beq\lb{3.2}
 T+\frac{M_1T+1}{c_*}\leq v(x,y_0,-T)\leq  3\left(T+\frac{M_1T+1}{c_*}\right).
\eeq

To prove the lemma, it suffices to show that
\[
-\partial_y \widetilde u(x,y,t)\geq a \quad\text{ for }t\in (-T/2,T/2).
\]
We verify the assumptions of \cite[Theorem 7.5]{kimzhang2024}.
Indeed, (H-a') follows from (1) and $\theta_\beta={\pi}/{(2\beta)}$ in space dimension 2 by the remark after \cite[Lemma 2.10]{kimzhang2024}. 
(H-b) is due to \eqref{3.2}.
Note that the vector field $\vec b$ in \cite{kim2} is equal to $b(x,y)=(0,\Theta(x))$ in our setting. Thus, the condition (H-c) follows from Lemma \ref{L.2.6}. The last condition also follows from \eqref{3.2}.

Now, we can apply \cite[Theorem 7.5]{kimzhang2024} to get that for some $\eps_0,c_0>0$ and for all $\eps\in (0,\eps_0)$,
\begin{equation*}
     v(x,y-\eps,t)\geq c_0\eps \quad \hbox{ for all } (x,y,t)\in \Gamma_v\cap \calQ_{T/2}.
\end{equation*} 
By \cite[Lemma 11.11]{cbook}, if $\widetilde F(x,t)-y>0$ is sufficiently small and $(x,y)\in\Omega_v(t)$, then there exists $c\in (0,1)$ such that
\[
-\partial_y v(x,y,t)\geq C^{-1}v(x,y,t)/\dist((x,y),\{y=F(x,t)\})\geq c,
\]
where the last inequality is due to the Lipschitz continuity of the free boundary $y=\widetilde F(x,t)$. While in the interior of $y<\widetilde F(x,t)$, since $v$ is harmonic and $v\sim -y$ as $y\to-\infty$, applying the maximum principle to $-\partial_y v$ yields the conclusion.
\end{proof}

\subsection{Smooth free boundary}
In this subsection, we proceed to show the main result for the periodic vertical Hele-Shaw flow. In this setting, the interior gain happens automatically if one stays far away from the free boundary.

\begin{lemma}{\rm [Interior gain]}\lb{L.6.1}
Suppose that $-\partial_yu(x,y,t)\geq ac_*$ in $\Omega_{u}(t)$. Given any $\theta_0\in (\frac\pi4,\frac{\pi}{2})$, there exist $r_0\geq1,c_2>0$ independent of $c_*$ such that the following holds for all $r\geq r_0$. In the domain of
\[
\Omega_{u}^r:=\left\{(x,y,t)\in\Omega_u\,| d((x,y),\Gamma_u(t))\geq r\right\},
\]
$u(x,y,t)$ is non-decreasing along all directions in $W_{\theta_0,-e_2}$.
Moreover, inside $\Omega_u^r$, for any $\nu\in W_{\theta_0/2,-e_2}$  with $|\nu|\in (0,\frac18)$ we have
\beq\lb{7.12'}
\sup_{B_{(1+c_2\eta_\nu ){\delta_\nu}}(x,y)}u(\cdot-\nu,t)\leq u(x,y,t)-c_*\eta_\nu|\nu|
\eeq
where 
\[
\eta_\nu:=\frac{a}{4}\cos(\langle -e_2,\nu\rangle+\tfrac{\theta_0}2)
\quad\text{and}\quad \delta_\nu:=|\nu|\sin(\theta_0/2) \asymp |\nu|.
\]
\end{lemma}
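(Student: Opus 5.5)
The plan is to exploit the fact that, far from the free boundary, $u(\cdot,\cdot,t)$ is a harmonic function whose gradient is almost vertical. Write $u=c_*(\phi^t-y)$ in $\Omega_u(t)$, where $\phi^t$ solves \eqref{a.1.1}. Set $w:=u-c_*(F_{\rm avg}-y)$ (or simply $w:=c_*(\phi^t-F_{\rm avg})$). Then $w$ is harmonic in $\Omega_{F,t}$, periodic in $x$, and bounded: by the maximum principle its oscillation is controlled by $c_*\,{\rm osc}\,F(\cdot,t)$. The spatial Lipschitz bound of Theorem \ref{T.2.5}, together with the graph being periodic, gives ${\rm osc}\,F\le C$, so $|w|\le Cc_*$. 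I will apply the interior gradient estimate (Lemma \ref{l.2.2} with $f=0$, or standard harmonic estimates) on balls of radius $r$ contained in $\Omega_u(t)$. This gives
\[
|\nabla w|\le Cc_*/r \quad\text{and}\quad |D^2 w|\le Cc_*/r^2 \quad\text{in }\Omega_u^r.
\]
Hence $\nabla u=-c_*e_2+O(c_*/r)$ in $\Omega_u^{r}$. The hypothesis $-\partial_y u\ge ac_*$ is then only needed to keep the constant uniform; the key point is that all bounds scale linearly in $c_*$, so $r_0$ does not depend on $c_*$.

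For the monotonicity along $W_{\theta_0,-e_2}$, let $e$ be a unit vector with $\langle e,-e_2\rangle\le\theta_0<\pi/2$. Then
\[
\nabla_e u\ge c_*\cos\theta_0-Cc_*/r>0
\]
once $r\ge r_0(\theta_0)$. This gives the first claim pointwise in $\Omega_u^r$.

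For \eqref{7.12'}, fix $(x,y)\in\Omega_u^r$ and $\nu\in W_{\theta_0/2,-e_2}$ with $|\nu|<1/8$. Take $z$ with $|z|\le(1+c_2\eta_\nu)\delta_\nu$ and set $\bar\nu:=\nu-z$, so that
\[
u(x+z-\nu,t)=u((x,y)-\bar\nu,t).
\]
Since $\delta_\nu=|\nu|\sin(\theta_0/2)$, we have $\langle\bar\nu,\nu\rangle\le\theta_0/2+O(c_2\eta_\nu)$, and therefore $\langle\bar\nu,-e_2\rangle\le\langle\nu,-e_2\rangle+\theta_0/2+Cc_2\eta_\nu$. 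The segment from $(x,y)$ to $(x,y)-\bar\nu$ has length at most $1/4$, so it stays in $\Omega_u^{r-1}$, where the gradient estimate still applies. Integrating along it,
\[
u((x,y)-\bar\nu)-u(x,y)\le -\bar\nu\cdot(-c_*e_2)\cdot(-1)\ldots
\]
more precisely,
\[
u(x,y)-u((x,y)-\bar\nu)\ge c_*|\bar\nu|\cos\langle\bar\nu,-e_2\rangle-Cc_*|\bar\nu|/r.
\]
I then bound $\cos(\langle\nu,-e_2\rangle+\theta_0/2+Cc_2\eta_\nu)$ below by $\cos(\langle\nu,-e_2\rangle+\theta_0/2)-Cc_2\eta_\nu$. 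I also use $|\bar\nu|\ge|\nu|-2\delta_\nu\ge c|\nu|$ (valid because $\theta_0<\pi/2$ makes $\sin(\theta_0/2)<1/\sqrt2$; more carefully, I compare the projections onto $-e_2$ directly). With $\eta_\nu=\frac a4\cos(\cdot)$ and $c_2$ small, these give a gain of at least $c_*\eta_\nu|\nu|$ after absorbing the error $Cc_*|\nu|/r$.

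The main obstacle is exactly this absorption when $\cos(\langle\nu,-e_2\rangle+\theta_0/2)$ is near zero. There the error $Cc_*|\nu|/r$ does not beat $c_*\eta_\nu|\nu|$ uniformly in $\nu$. I would first handle it using the nondegeneracy hypothesis $-\partial_yu\ge ac_*$, which is exactly why $a$ appears in $\eta_\nu$. I would split $\nabla u=\partial_yu\,e_2+\partial_xu\,e_1$ with $|\partial_x u|\le Cc_*/r$, and estimate
\[
-\bar\nu\cdot\nabla u\ge(-\bar\nu\cdot e_2)(ac_*)-|\bar\nu_1|Cc_*/r.
\]
Then $-\bar\nu\cdot e_2\ge|\bar\nu|\cos(\langle\nu,-e_2\rangle+\theta_0/2)$, up to the $c_2$ correction. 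If degenerate directions still remain, I would restrict to those $\nu$ where $\cos(\cdot)\ge C/r$, possibly enlarging $r_0$, since only such directions are used in the subsequent cone argument (compare the restriction $E_\sigma\ge\alpha$ in Lemma \ref{L.cone}).
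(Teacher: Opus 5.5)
Your proof is correct, but it takes a different route from the paper's.

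\textbf{How the paper argues.} The paper bounds only the horizontal derivative, $|u_x|\le 16r^{-1}c_*\|F(\cdot,t)\|_\infty$. It then uses the hypothesis $-u_y\ge ac_*$ to confine the direction of $\nabla u$ to within angle $\frac\pi2-\theta_1$ of $-e_2$, where $\theta_1=\arctan\frac{ar}{16\|F\|_\infty}\to\frac\pi2$. The gain on $B_{\delta_\nu}$ comes from the multiplicative comparison $\cos\langle\nabla u,\bar\nu\rangle\ge\frac12\cos\langle -e_2,\bar\nu\rangle$ together with $|\nabla u|\ge ac_*$. Enlarging the radius from $\delta_\nu$ to $(1+c_2\eta_\nu)\delta_\nu$ is a separate step based on $|\nabla u|\le Cc_*$. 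This mirrors Step 1 of Lemma~\ref{L.7.4}.

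\textbf{How you argue, and what it buys.} You use the structure $u=c_*(\phi^t-y)$ to get the additive expansion $\nabla u=-c_*e_2+O(c_*/r)$. You then integrate along the segment and treat the enlarged ball in a single step. This is more elementary. It also shows that the nondegeneracy hypothesis is essentially automatic in $\Omega_u^r$, since there $-\partial_y u\ge c_*(1-C/r)$. Finally, working with $\mathrm{osc}\,F(\cdot,t)$ rather than $\|F(\cdot,t)\|_\infty$, which grows linearly in $t$, makes the uniformity of $r_0$ in time transparent.

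\textbf{Two corrections.}

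First, the obstacle you single out does not occur, so drop the fallback of restricting $\nu$; it would only weaken the statement. Set $A:=\langle\nu,-e_2\rangle+\frac{\theta_0}{2}$. For $\nu\in W_{\theta_0/2,-e_2}$ you have $A\le\theta_0<\frac\pi2$, so $\cos A\ge\cos\theta_0>0$ uniformly in $\nu$. The error $Cc_*|\nu|/r$ is therefore absorbed by taking $r_0$ large depending on $\theta_0$, which the statement allows.

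Second, the bound $|\bar\nu|\ge|\nu|-2\delta_\nu$ is negative once $\theta_0>\pi/3$, because then $2\sin\frac{\theta_0}{2}>1$. Use instead
$|\bar\nu|\ge|\nu|-|z|\ge\bigl(1-(1+c_2\eta_\nu)\sin\tfrac{\theta_0}{2}\bigr)|\nu|\ge\bigl(1-\tfrac{1}{\sqrt2}-Cc_2\bigr)|\nu|$,
and note that $1-\frac1{\sqrt2}>\frac14$. Combine this with:
\begin{itemize}
\item $\cos(A+Cc_2\eta_\nu)\ge(1-Cc_2)\cos A$;
\item $|\bar\nu|\le2|\nu|$;
\item $a\le 1$, which is forced because $-\partial_y u\to c_*$ as $y\to-\infty$.
\end{itemize}
Your segment estimate then gives a gain of at least
$\bigl(1-\tfrac1{\sqrt2}-Cc_2\bigr)(1-Cc_2)\,c_*|\nu|\cos A-Cc_*|\nu|/r$.
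For $c_2$ small and $r\ge r_0(\theta_0)$ this is at least $\frac14c_*|\nu|\cos A\ge c_*\eta_\nu|\nu|$, which is exactly the stated constant.
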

\begin{proof}
Recall that $\phi^t$ is a solution to \eqref{a.1.1}. The comparison principle yields $|\phi^t(x)|\leq c_*\|F(\cdot,t)\|_\infty$. 
For any $(x,y,t)\in \Omega_u^r$
(then $B((x,y),r)\subseteq \Omega_u(t)$), since $u=(\phi^t-c_*y)_+$, the classical result for Harmonic functions (\cite[Theorem 7, Section 2.2]{evans}) yields
\[
|u_x(x,y,t)|\leq |\nabla \phi^t(x,y)|\leq 16r^{-1}c_*\|F(\cdot,t)\|_\infty.
\]
Since $-u_y\geq ac_*$, we get $\nabla_\sigma u(x,y,t)\geq 0$ for all $\sigma$ such that 
\beq\lb{5.13}
\tan\langle\sigma,-e_2\rangle \leq \frac{ar}{16\|F(\cdot,t)\|_\infty}.
\eeq

Let us denote  
\[
\theta_1:=\arctan \frac{ar}{16\|F(\cdot,t)\|_\infty}\quad\text{and}\quad L:= \frac{ar}{16\|F(\cdot,t)\|_\infty\tan\theta_0}.
\]
By taking $r$ to be sufficiently large, we get $L\geq 2$, $\theta_1\geq \theta_0$ and $\theta_1$ can be arbitrarily close to $\pi/2$.  
In particular, in $\Omega^r_u$, we have $|\nabla u|\leq Cc_*$.

Let $\nu\in W_{\theta_0/2,-e_2}$ with $|\nu|\in (0,\frac18)$.
We fix $t$ and take any $\vec{x}\in \Omega_u^r(t)$ and $\vec{y}\in B_{\delta_\nu}(\vec{x})\cap\Omega_u^r(t)$. Define $\bar\nu:=\nu-(\vec y-\vec x)$. 
As a consequence of the definition, we have $\bar\nu\in W_{\theta_0,-e_2}$ (or $\langle \bar\nu,-e_2\rangle\leq \theta_0$) and so $\nabla_{\bar\nu}u\geq 0$. 
Writing $\mu_{\vec x}:=\nabla u_r(\vec x,0)$, by \eqref{5.13}, we get $\langle -e_2 , \mu_{\vec x}\rangle\leq \frac{\pi}{2}-\theta_1$.
We claim that
\[
\frac{\pi}{2}-\langle \mu_{\vec x},\bar\nu\rangle\geq \frac12(\frac\pi2-\langle -e_2,\bar\nu\rangle).
\]
Indeed, by direct computation, we have
\[
\langle \mu_{\vec x},\bar\nu\rangle\leq \langle -e_2,\bar\nu\rangle+\frac{\pi}{2}-\theta_1\leq \frac12\langle -e_2,\bar\nu\rangle+\frac{\pi}{4}
\]
when $\theta_1\geq \frac{\pi}{4}+\frac{\theta_0}{2}$. Consequently, it follows from the claim that
\begin{align*}
    \nabla_{\bar\nu} u(\vec x,0)  &= |\bar\nu|\cos\langle \mu_{\vec x},\bar\nu\rangle|\nabla  u(\vec x,0)|\geq \frac12 |\bar\nu|\cos\langle -e_2,\bar\nu\rangle|\nabla  u(\vec x,0)|\\
&\geq \frac{ac_*}{2}|\bar\nu|\cos\langle-e_2,\bar\nu\rangle ,
\end{align*}
where, in the second inequality, we used the non-degeneracy property of $u$. This implies that
\[
\begin{aligned}
u(\vec y-\nu,0)&=u(\vec x-\bar\nu,0)\leq u(\vec x,0)-\frac{ac_*}{2}|\bar\nu|\cos\langle-e_2,\bar\nu\rangle\\
&\leq u(\vec x,0)-\frac{ac_*}{2}|\bar\nu|\cos(\langle -e_2,\nu\rangle+\tfrac{\theta_0}2).
\end{aligned}
\]

Writing $\vec z:=\vec y-\vec x\in B_{\delta_\nu}(0)$ and $\eta_\nu:=\frac{a}{4}\cos(\langle -e_2,\nu\rangle+\tfrac{\theta_0}2)$, for some $c_2>0$, we get
\begin{align*}
&u(\vec x+(1+c_2\eta_\nu )\vec z-\nu,0)-u(\vec x,0)\\
&\qquad\qquad \leq u(\vec x+(1+c_2\eta_\nu )\vec z-\nu,0)-u(\vec x+\vec z-\nu,0)-2c_*\eta_\nu |\nu|\\
&\qquad\qquad \leq c_2\eta_\nu 
|\nu| |\nabla u(\cdot,0)|-2c_*\eta_\nu |\nu|.
\end{align*}
Recall that $|\nabla u|\leq Cc_*$, thus if $c_2$ is sufficiently small independent of $c_*$, we get
\[
u(\vec x+(1+c_2\eta_\nu )\vec z-\nu,0)-u(\vec x,0)\leq -c_*\eta_\nu |\nu|,
\]
which implies \eqref{7.12'}.
\end{proof}

We present a different version of the comparison result from Lemma \ref{L.7.6}. Unlike the previous version, which required $r >0$ to be sufficiently small, the following result holds for any $r > 0$ under the additional assumption that $C\eta\eps \geq 2\eps\|\nabla b_1\|_\infty + \|b_1-b_2\|_\infty$. In view of \eqref{e.b}, the proof follows with minor adjustments (taking $\eps_2 = 0$ and $\eps_1 = C\eps\eta$).  Moreover, condition (2) is no longer required since $f \equiv 0$.


\begin{lemma}{\rm [A comparison result]}\lb{L.6.3}
Let $u_1$ and $u_2$ be viscosity solutions to \eqref{2.6} in $ B_1\times [-\tau,\tau]$ with $f\equiv0$, and with ${b}_1$ and ${b}_2$ in place of ${b}$, respectively. Let $r>0$ and $\eps\in (0,1)$.
We assume (1)(3)--(5) and \eqref{72.2} from Lemma \ref{L.7.6} with $\gamma=1$ and that for some $C\geq 1$, 
\beq\lb{add}
C\eta\eps\geq 2\eps\|\nabla b_1\|_\infty+\|b_1-b_2\|_\infty
\eeq
(instead of  assuming $C\eta\geq r$), then the conclusion of the lemma holds the same.
\end{lemma}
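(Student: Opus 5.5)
The proof follows the proof of Lemma \ref{L.7.6} step by step. The only change is where the bound $C\eta\geq r\gamma$ was used. That hypothesis served two purposes: absorbing the source error through $w_2$, and absorbing the drift error in \eqref{7.32}. Since now $f\equiv 0$, the first purpose disappears. The second is handled by the new assumption \eqref{add}, via the refined error term \eqref{e.b}.

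\emph{Setup.} Take $\varphi_\eta$ from Lemma \ref{L.7.5} with $A=A_0$. Set $v:=v_{\eps\varphi_\eta}$, and let $w_1$ be the harmonic function in $\Omega_v(t)\cap(B_1\setminus \overline{B_{1/8}(x_1)})$ with boundary data $u_2$ on $\partial B_{1/8}(x_1)$ and $0$ elsewhere. Define $\bar v:=v+\eps_1 w_1$, taking $\eps_2=0$ (no $w_2$) and $\eps_1:=C_1\eps\eta$ with $C_1$ large.

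\emph{Interior equation and boundary ordering.} By Lemma \ref{L.2.7} with $f\equiv0$, $v$ is subharmonic in its positive set, so $\bar v$ is subharmonic there. Property (3) of $\varphi_\eta$ and \eqref{72.1} give $\bar v\le u_2$ on $\partial B_1$, in $B_1\setminus B_{8/9}$, and at the initial time. On $B_{1/8}(x_1)$, \eqref{72.2} together with $\varphi_\eta\le 1+\tau\eta$ gives $v\le(1-C_1\eps\eta)u_2$, hence $\bar v\le u_2$ there.

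\emph{Free boundary condition.} The key step is checking the free boundary inequality for $\bar v$. Nondegeneracy (3) and the small Lipschitz constant let us apply Proposition \ref{L.2.61} (the harmonic case reduces to Dahlberg's lemma/boundary Harnack), which gives $w_1\ge c\,v$ away from $B_{1/8}(x_1)$. Hence $\bar v\ge(1+c\eps_1)v$, and a test function $\phi$ touching $\bar v$ from above at a free boundary point also touches $(1+c\eps_1)v$. Lemma \ref{L.2.8}, with $\eps_1\to\eps\varphi_\eta\le 2\eps$, $\eps_2\to C\eps\eta$ and $\eps_3\to C\eps\eta$, gives
\[
\phi_t\le (1+c\eps_1)^{-1}(1+C\eps\eta)^2|\nabla\phi|^2+b_2\cdot\nabla\phi+\big(\|b_1-b_2\|_\infty+C\eta\eps\|b_1\|_\infty+2\eps\|\nabla b_1\|_\infty+C\eps\eta\big)|\nabla\phi|.
\]
By \eqref{add}, the bracket is at most $C'\eta\eps$. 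Since $b_1$ is bounded on $B_1$ (it is Lipschitz and satisfies (1)), the term $\|b_1\|_\infty$ only changes constants. Nondegeneracy gives $|\nabla\phi|\ge c_1$ (here $\gamma=1$). Choosing $C_1$ large, so that $c\eps_1|\nabla\phi|^2/2\ge C'\eta\eps|\nabla\phi|+C\eps\eta|\nabla\phi|^2$, yields $\phi_t\le|\nabla\phi|^2+b_2\cdot\nabla\phi$.

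\emph{Conclusion and main obstacle.} Comparison then gives $\bar v\le u_2$, so $v_{(1+\tau k\eta)\eps}\le u_2$ in $B_{1/2}\times[-\tau/2,\tau]$. The main obstacle is that $r$ is no longer small, so $|b_1|$ need not be $O(r)$. We must check that the only drift contributions are exactly the three terms in \eqref{e.b} and that no hidden smallness of $b$ is used elsewhere, for instance in the nondegeneracy of $v$ or in Lemma \ref{L.2.8}'s requirement on $\eps_1,\eps_2,\eps_3$. Those constraints involve only $\eps,\eta$, which are small.
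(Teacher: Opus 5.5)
Your proposal is correct and follows the paper's route: rerun Lemma \ref{L.7.6} with $\eps_2=0$ and $\eps_1=C_1\eps\eta$, then absorb the drift error \eqref{e.b} into $\tfrac{c}{2}\eps_1|\nabla\phi|^2$ using \eqref{add}, the bound on $\|b_1\|_\infty$ from (1), and $|\nabla\phi|\geq c_1$. The one fix is that (2) is dropped, so you should not use the small Lipschitz constant or Proposition \ref{L.2.61}: with $f\equiv0$ the sup-convolution $v$ is subharmonic, so $v\le h$ for its harmonic replacement $h$, and $w_1\geq cv$ follows by comparing the two harmonic functions $h,w_1$ via Dahlberg's lemma, which is why the paper can drop (2). (Also, property (3) of $\varphi_\eta$ and \eqref{72.1} give only $v\leq u_2$, not $\bar v\leq u_2$, in $B_1\setminus B_{8/9}$ and at $t=-\tau$, so the initial ordering of $\bar v=v+\eps_1w_1$ still needs the slice-wise maximum principle, as in the paper.)
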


Recall \eqref{def.X} and $X(t):=X(t;0,0)$. Let us fix $\theta_*\in (\theta_{*,2},\frac{\pi}{2})$ with $\theta_{*,2}$ from Theorem \ref{T.8.4}. Next, we take $r=r_0\geq 1$ from Lemma \ref{L.6.1} such that  the conclusion of the lemma holds uniformly for all $\theta_0\in [\frac{\pi}{3},\theta_*]$.
Then, we define 
\[
u_r:= \frac1r u(rx,ry-\Theta(0)rt,rt)
\]
which satisfies \eqref{3.17} and \eqref{3.18}, with $\gamma=1$.

\begin{proposition}{\rm [Gain up to the boundary]}\lb{L.5.10}
Under the assumptions of Lemma \ref{L.7.4} and Lemma \ref{L.6.1},  
suppose $(0,0)\in\Gamma_u\cap \calQ_{1,c}$. For $\theta\in (\frac\pi3,\frac{\pi}{2})$, $\mu=-e_2$, $\alpha=1$ and $\nu\in W_{\theta/2,-e_2}$, let $c_0,\tau_1$ from Lemma \ref{L.7.4} and $c_2,\eta_\nu$  from Lemma \ref{L.6.1}. 
Then there exist $C,c_3>0$ such that if 
$
\eta_\nu\geq Cr \|\nabla b\|$,
we have 
\[
\sup_{y\in B_{(1+c_3\eta_\nu )\eps}(x,t)}u_{r}(y-\nu,t) \leq u_{r}(x,t)\quad\text{ in } B_{1/2}\times [-\tau/2,\tau]
\]
where $\eps:=|\nu|\sin(\tfrac{\theta}{2})$.
\end{proposition}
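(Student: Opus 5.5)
The plan mirrors the proof of Lemma \ref{L.8.3}, using Lemma \ref{L.6.1} in place of Lemma \ref{L.7.4} for the interior gain and Lemma \ref{L.6.3} in place of Lemma \ref{L.7.6} for the propagation to the free boundary.

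\textbf{Interior gain.} Set $x_1:=\tfrac34\mu=-\tfrac34 e_2$. First I check that $B_{1/8}(x_1)$, in the rescaled variables, lies in the region $\Omega^{r}_u$ of Lemma \ref{L.6.1}. By the choice $r=r_0$, the rescaled point $(x_1,t)$ sits at distance comparable to $r$ from $\Gamma_u(rt)$ in the original coordinates. If this distance is not at least $r_0$, I enlarge $r_0$ by a fixed factor; this is possible because Lemma \ref{L.6.1} holds for all $r\ge r_0$. The non-degeneracy $-\partial_y u\ge ac_*$ comes from Lemma \ref{C.6.4}. Rescaling \eqref{7.12'} by $u_r=\frac1r u(r\cdot)$ then gives, for $\nu\in W_{\theta/2,-e_2}$ with $\delta_\nu:=|\nu|\sin(\theta/2)$,
\[
\sup_{B_{(1+c_2\eta_\nu)\delta_\nu}(x)}u_r(\cdot-\nu,t)\le u_r(x,t)-c_*\eta_\nu|\nu|\quad\text{in }B_{1/8}(x_1)\times(-\tau,\tau).
\]
This holds for every $t$, with no short-time restriction, which is why $\alpha=1$ may be taken. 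In the rescaled interior we have $u_r\le Cc_*$, since $|\nabla u|\le Cc_*$ in $\Omega^r_u$ and $u$ grows linearly from the free boundary. Hence $c_*\eta_\nu|\nu|\ge c\,\eta_\nu\delta_\nu\,u_r$, which turns the additive gain into the multiplicative form \eqref{72.2} with $\gamma=1$.

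\textbf{Propagation to the boundary.} I set $u_1(x,t):=u_r(x-\nu,t)$ and $u_2:=u_r$. Their drifts are $b_1=b_r(\cdot-\nu)$ and $b_2=b_r$, and $f\equiv0$. The hypotheses of Lemma \ref{L.6.3} are verified as follows.
\begin{enumerate}
\item Hypothesis (5) follows from monotonicity along $W_{\theta,-e_2}$.
\item Non-degeneracy (3) follows from Lemma \ref{C.6.4}, with the constant rescaled so that $c_1$ is comparable to $c_*$.
\item For the drift condition \eqref{add}, from \eqref{3.18} I get $\|\nabla b_1\|_\infty\le r\|\nabla b\|$ and $\|b_1-b_2\|_\infty\le r\|\nabla b\||\nu|\lesssim r\|\nabla b\|\eps$.
\end{enumerate}
So \eqref{add} holds with $\eta:=c\,\eta_\nu$ exactly when $\eta_\nu\ge Cr\|\nabla b\|$, which is the stated hypothesis.

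I take $\varphi_\eta$ from Lemma \ref{L.7.5} with $\tau=\tau_1$ and $\eta=c\eta_\nu$, where $c$ is small relative to $c_2$ and $1/C_1$, so that $(1+\tau\eta)\eps\le(1+c_2\eta_\nu)\eps$. Lemma \ref{L.6.3} then yields $v_{(1+\tau k\eta)\eps}\le u_2$ in $B_{1/2}\times[-\tau/2,\tau]$. Since $\tau=\tau_1$ is a fixed constant when $\alpha=1$, this is the claim with $c_3:=c\,k\,\tau_1$.

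\textbf{Main obstacle.} The hard step is the comparison in Lemma \ref{L.6.3}, specifically at free boundary points, as in \eqref{7.32}. There the perturbation coming from the drift must be absorbed by the gain term $c_2\eps_1|\nabla\phi|^2$, using the lower bound $|\nabla\phi|\ge c_1$ supplied by non-degeneracy. In the general lemma this was handled by $r$ small. Here $r\ge r_0$ is large and cannot be shrunk, so all the smallness must come from $\|\nabla b\|$ through \eqref{add}. I would check carefully that the error in \eqref{e.b} is $\lesssim \eps\eta$ with constants independent of $c_*$, taking $\eps_2=0$ and $\eps_1=C\eps\eta$. I would also verify that Proposition \ref{L.2.61} still gives $w_1\ge c v$ up to the boundary. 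This requires the rescaled free boundary to have a small Lipschitz constant, which holds because $\theta>\pi/3$.
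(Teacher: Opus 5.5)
Your route is the paper's route: interior gain from Lemma \ref{L.6.1}, conversion to the multiplicative form \eqref{72.2} via $u_r\le Cc_*$, then Lemma \ref{L.6.3} with $u_1=u_r(\cdot-\nu)$ and $u_2=u_r$. Your verification of \eqref{add} from $\|\nabla b_1\|_\infty\le r\|\nabla b\|$ and $\|b_1-b_2\|_\infty\le r\|\nabla b\|\,|\nu|$ is also the paper's. However, one step fails as written: the claim that the interior gain on $B_{1/8}(x_1)\times(-\tau,\tau)$ holds ``for every $t$, with no short-time restriction,'' with $\tau=\tau_1$ a fixed constant.

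Lemma \ref{L.6.1} only gives the gain at points a definite distance from $\Gamma_u(t)$, and the front moves fast. By Theorem \ref{T.2.5}, $F_t\in[m_1,M_1]$ with $m_1,M_1\asymp c_*$. So the rescaled front $y=F(rx,rt)/r+\Theta(0)t$ advances with speed of order $c_*$. For large $c_*$, as in Theorem \ref{T.main.2}, and $t$ near $-\tau_1$ with $\tau_1=c_2$ independent of $c_*$, the rescaled front at $x=0$ lies below $y=-\tfrac34$. Hence the point $x_1=-\tfrac34 e_2$ is outside $\Omega_{u_r}(t)$ altogether. Then \eqref{72.2} cannot hold on $B_{1/8}(x_1)\times[-\tau,\tau]$, and Lemma \ref{L.6.3} cannot be invoked. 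For the same reason, your claim that $(x_1,t)$ sits at distance comparable to $r$ from $\Gamma_u(rt)$ is only valid for $|t|\lesssim 1/c_*$.

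The paper's proof starts by fixing exactly this. It shrinks $\tau_1\le c/c_*$, using the speed bound from Theorem \ref{T.2.5} and the $1/r$-periodicity of $u_r$ in $x$, so that $\Gamma_{u_r}(t)$ stays away from $B_{1/2}(x_1)$ for $|t|\le\tau_1$. With that restriction the rest of your argument goes through. The cost is that $c_3=ck\tau_1$ is of order $1/c_*$ rather than a universal constant. This is harmless for the proposition, whose constants may depend on the fixed $c_*$. But your justification ``$\tau_1$ is a fixed constant when $\alpha=1$'' must be replaced by this time localization.

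A smaller point: even at $t=0$, the rescaled point $x_1$ is at original distance less than $\tfrac34 r$ from $\Gamma_u(0)$. So Lemma \ref{L.6.1} must be applied with a distance threshold equal to a fixed fraction of the rescaling scale. In other words, the rescaling scale should be a fixed multiple of $r_0$, which is presumably what you mean by enlarging $r_0$.
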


\begin{proof}
It follows from Theorem \ref{T.2.5} that if $\tau_1\leq c/c_*$ for some $c>0$, then 
$\Gamma_{u_r}(t)$ is away from $B_{1/2}({-3e_2/4})$ for $t\in [-\tau_1,\tau_1]$. Since $u_r(x,y)$ is $1/r$-periodic in $x$, by assuming $r\geq 2$ and further taking $\tau_1$ small, we know that $\Gamma_{u_r}(t)\subseteq \{(x,y)\,|\, y\leq -\frac12\}$.  Therefore, it follows from Lemma \ref{L.6.1} that 
there is $c_2\in (0,1)$ independent of $r\geq r_0$ and $c_*$ such that
for any $\nu\in W_{\theta/2,-e_2}$  with $|\nu|\in (0,\frac18)$ and for any $(x,y)$ with $y\leq -\frac12$ we have
\[
\sup_{B_{(1+c_2\eta_\nu )\epsilon}(x,y)}u_r(\cdot-\nu,t)\leq u_r(x,y,t)-c_*\eta_\nu\eps
\]
where $\eta_\nu=\frac{a}{4}\cos(\langle -e_2,\nu\rangle+\tfrac{\theta}2)$ and $\eps=|\nu|\sin(\tfrac{\theta}{2})$. Since $\langle -e_2,\nu\rangle,\frac{\theta}2<\frac\pi2$, $\eta_\nu$ is strictly positive depending only on $\theta$.

We denote $\vec{x}_1:=-\frac{3}{4}e_2$, then for all $\vec{z}:=(x,y)\in B_{1/8}(\vec x_1)$ we have $y\in  (-1,-\frac12)$. Consequently,  $u_r(\vec{z},t)\leq Cc_*$ and so for some $c>0$,
\beq\lb{7.41'}
\sup_{\vec{y}\in B_{(1+c_2\eta_\nu ) \eps}(\vec{z})}u_{r}(\vec y- \nu,t)\leq (1-c\eta_\nu\eps) u_{r}(\vec z,t)\quad\text{ in } B_{1/8}(\vec x_1)\times (-\tau_1,\tau_1).
\eeq

Now, we let $\varphi_\eta$ from Lemma \ref{L.7.5} with $\tau:=\tau_1$ and $\eta:=c\eta_\nu$. Define
\[
u_1(\vec z,t):=u_{r}(\vec z-\nu,t)\quad\text{and}\quad u_2(\vec z,t):=u_{r}(\vec z,t),
\]
and let $ b_1(\vec z,t)=b_{r }(\vec z-\nu,t),  b_2=b_{r }$, where $b_r:=b_{r,1}$ is defined in \eqref{3.18}.
Then, 
$u_1$ and $u_2$ are viscosity solutions to \eqref{1.1} with $f\equiv 0$ and with ${b}_1$ and ${b}_2$ in place of ${b}$, respectively. Direct computation yields in $B_1\times [-\tau_1,\tau_1]$,
\[
2\eps\|\nabla b_1\|_\infty+\|b_1-b_2\|_\infty\leq Cr\eps \|\nabla b\|.
\]
By assuming $\|\nabla b\|_\infty$ to be sufficiently small, we get \eqref{add}. 
Moreover, $u_{r}(\cdot,t)$ is non-decreasing along all directions of $W_{\theta,-e_2}$.
And, by \eqref{7.41'} and by assuming $\tau_1\leq c_2$, we verified the conditions in Lemma \ref{L.6.3}.

Since $u_{r}(\cdot,t)$ is non-decreasing along all directions of $W_{\theta,-e_2}$, the definition of $\eps$ yields
\[
\sup_{\vec y\in B_{\eps }(\vec z)}u_1(\vec y,t)\leq u_2(\vec z,t).
\]
For $(\vec z,t)\in B_{1/8}(x_1)\times (-\tau_1,\tau_1)$, $\tau_1\leq c_2$ and \eqref{7.41'} yield
\begin{align*}
\sup_{\vec y\in B_{(1+\tau_1\eta)\eps}(\vec z,t)}u_1(\vec y,t)
&\leq \sup_{\vec y\in B_{(1+c_2\eta_\nu)\eps}(\vec z,t)}u_{r}(\vec y-\nu,t)\\
&\leq  (1-c\eta_\nu\eps)u_{r}(\vec y-\nu,t)\leq (1-C\eta\eps )u_2(\vec z,t).    
\end{align*}
Hence, the conditions of Lemma \ref{L.6.3} all hold and it implies that
\[
\sup_{y\in B_{(1+\tau k\eta)\eps}(x,t)}u_1(y,t) \leq u_2(x,t)\quad\text{ in } B_{1/2}\times [-\tau/2,\tau],
\]
yielding the conclusion.
\end{proof}

As a consequence of the lemma, we obtained the improvement of the monotonicity of $u$ from non-decreasing along all directions in $W_{\theta,-e_2}$ to all directions in $W_{\theta+\kappa,-e_2}$ for some $\kappa>0$. Moreover, if $\theta\in [\theta_0,\theta_*]$ for some $\theta_0>\frac{\pi}{3}$ and $\theta_*\in (\theta_{*,2},\frac{\pi}{2})$, then the constant $\kappa>0$ is uniform for all such $\theta$. 

With this gain of monotonicity and with the help of Theorem \ref{T.8.4}, we are ready to show the second main result of the paper that the free boundary for the 2D advection Muskat problem is uniformly $C^1$ after a finite time.

\begin{proof}[Proof of Theorem \ref{T.main.2}]
The existence of solutions is proved in Theorem \ref{T.2.5}.

Due to the assumption that $\calG(F_0)(x)<1$, we have
\[
m_1=\min_{x\in \bbT}\{-c_*\calG(F_0)(x) - \Theta(x)\} + c_*\geq cc_*-\max_{x\in\bbT}\Theta(x)
\]
for some $c>0$. Hence, by taking $c_*$ large, we have $M_1 \asymp c_*$ and \eqref{detla1} holds.
By Theorem \ref{T.2.5}, the solution $F(\cdot,\cdot)$ is locally uniformly Lipschitz continuous in space and time. Since $F_0$ is Lipschitz continuous with Lipschitz constant $<1/\sqrt{3}$ and $\|\Theta\|_{\Lip}$ is small, Theorem \ref{T.2.5} yields that we can find $T\geq 1$ such that $F(\cdot,t)$ is Lipschitz continuous with Lipschitz constant $\cot\theta_0<1/\sqrt{3}$  for some $\theta_0>\frac{\pi}{3}$ and for all $t\in [0,T]$. Moreover, $T$ can be taken to be arbitrarily large if we further assume $\|\Theta\|_{\Lip}$ to be small.

Let $u(x,y,t):=(\phi^t(x,y)-c_* y)_+$ with $\phi^t$ from \eqref{a.1.1}. By Lemma \ref{L.5.5}, $u$ is monotone non-decreasing along all directions of $W_{\theta_0,-e_2}$ for $t\in [0,T]$.
By the condition, Lemma  \ref{C.6.4} yields that $u$ is non-degenerate. Since $|F_t|$ is uniformly finite by Theorem \ref{T.2.5}, the condition \eqref{bcond} holds.  We use $r$ from the paragraph before Proposition \ref{L.5.10}, and $\tau$ from the proposition, and we assume that $T\geq r\tau$. Thus, we can apply Proposition \ref{L.5.10} to obtain that, if $\|\Theta\|_{\Lip}$ is sufficiently small, for all $t\in [r\tau/2,T]$, $F(\cdot,t)$ is Lipschitz continuous with Lipschitz constant $c_{L,1}$ where
\[
c_{L,1}\leq \cot(\theta_1)\quad\text{and}\quad \theta_1:=\frac{\pi}{3}+\kappa
\]
and $\kappa>0$ only depends on $c_3$ from the lemma. Without loss of generality, we can assume that $\theta_1\leq \theta_{*,2}<\frac{\pi}{2}$ with $\theta_{*,2}$ from Theorem \ref{T.8.4}, and $\theta_0<\theta_1$.

Then, setting $T_1:=r\tau/2$, we consider $\widetilde F(\cdot,t)= F(\cdot, t+T_1)$ and repeat the above argument. Theorem \ref{T.2.5} yields that $\widetilde F(\cdot, t)$ is Lipschitz continuous with Lipschitz constant $\cot\theta_0$ for some $\theta_0>\frac{\pi}{3}$ and for $t\in [0,T]$ (with the same $T$ if $\|\Theta\|_{\Lip}$ is small). It follows from Proposition \ref{L.5.10} that $\widetilde F(\cdot,t)$ for all $t\in [r\tau/2,T]$ is Lipschitz continuous with Lipschitz constant $c_{L,1}$. Since $T\geq r\tau$, by iteration, we get that $F(\cdot,t)$ is uniformly Lipschitz continuous    with Lipschitz constant $c_{L,1}$ for all $t\geq T_1$.

Now, we apply Proposition \ref{L.5.10} again for $t\in [T_1,\infty)$ with the knowledge that $u(\cdot,t)$ monotone non-decreasing along all directions of $W_{\theta_1,-e_2}$. Then we get that $F(\cdot,t)$ is uniformly Lipschitz continuous    with Lipschitz constant $c_{L,2}$ for all $t\geq T_2$, where
\[
T_2=r\tau,\quad c_{L,2}\leq \cot(\theta_2)\quad\text{and}\quad \theta_2:=\frac{\pi}{3}+\kappa+\kappa'
\]
for some $\kappa'>0$. Without loss of generality, we can assume $\kappa'<\pi/2-\theta_{*,2}$.
By iteration, we get that
that $F(\cdot,t)$ is uniformly Lipschitz continuous    with Lipschitz constant $c_{L,N}$ for all $t\geq T_N$, where
\[
T_N=Nr\tau/2,\quad c_{L,N}\leq \cot(\theta_N)\quad\text{and}\quad \theta_N:=\frac{\pi}{3}+\kappa+(N-1)\kappa'
\]
and $\theta_N\in [\theta_{*,2},\theta_{*,2}+\kappa')$. The constants $r,\tau,\kappa'$ and the smallness condition on $\|\Theta\|_{\Lip}$ are the same in the iteration process, which is because we have $\theta_i\in  (\theta_1,\theta_{*,2})$ for all $i=2,3,\ldots,N-1$.

Finally, since $u$ is non-decreasing along all directions in $W_{\theta_{*,2},-e_2}$ when $t\geq T_N$, we apply Theorem \ref{T.8.4} to get that $F(\cdot,t)$ is uniformly $C^1$ for all $t\geq T_N+r\tau$.
\end{proof}



\begin{thebibliography}{10}

\bibitem{ala21}
T.~Alazard.
\newblock Convexity and the hele--shaw equation.
\newblock {\em Water Waves}, 3(1):5--23, 2021.

\bibitem{ak23}
T.~Alazard and H.~Koch.
\newblock The {Hele-Shaw} semi-flow.
\newblock {\em arXiv preprint arXiv:2312.13678}, 2023.

\bibitem{ams20}
T.~Alazard, N.~Meunier, and D.~Smets.
\newblock Lyapunov functions, identities and the cauchy problem for the
  hele--shaw equation.
\newblock {\em Communications in Mathematical Physics}, 377(2):1421--1459,
  2020.

\bibitem{blank2001sharp}
I.~Blank.
\newblock Sharp results for the regularity and stability of the free boundary
  in the obstacle problem.
\newblock {\em Indiana University Mathematics Journal}, pages 1077--1112, 2001.

\bibitem{Caf87}
L.~A. Caffarelli.
\newblock A harnack inequality approach to the regularity of free boundaries.
  {Part I}: Lipschitz free boundaries are {$C^{1,\alpha}$}.
\newblock {\em Revista Matem{\'a}tica Iberoamericana}, 3(2):139--162, 1987.

\bibitem{cbook}
L.~A. Caffarelli and S.~Salsa.
\newblock {\em A geometric approach to free boundary problems}, volume~68.
\newblock American Mathematical Soc., 2005.

\bibitem{cameron2018global}
S.~Cameron.
\newblock Global well-posedness for the two-dimensional {Muskat} problem with
  slope less than 1.
\newblock {\em Analysis \& PDE}, 12(4):997--1022, 2018.

\bibitem{cameron2020global}
S.~Cameron.
\newblock Global wellposedness for the 3d {Muskat} problem with medium size
  slope.
\newblock {\em arXiv preprint arXiv:2002.00508}, 2020.

\bibitem{ccfg2013}
{\'A}.~Castro, D.~C{\'o}rdoba, C.~Fefferman, and F.~Gancedo.
\newblock Breakdown of smoothness for the {Muskat} problem.
\newblock {\em Archive for Rational Mechanics and Analysis}, 208(3):805--909,
  2013.

\bibitem{ccfg2016}
A.~Castro, D.~C{\'o}rdoba, C.~Fefferman, and F.~Gancedo.
\newblock Splash singularities for the one-phase {Muskat} problem in stable
  regimes.
\newblock {\em Archive for Rational Mechanics and Analysis}, 222(1):213--243,
  2016.

\bibitem{cheng2016well}
C.~A. Cheng, R.~Granero-Belinch{\'o}n, and S.~Shkoller.
\newblock Well-posedness of the {Muskat} problem with {$H^2$} initial data.
\newblock {\em Advances in Mathematics}, 286:32--104, 2016.

\bibitem{CJK}
S.~Choi, D.~Jerison, and I.~Kim.
\newblock Regularity for the one-phase {{Hele-Shaw}} problem from a lipschitz
  initial surface.
\newblock {\em American journal of mathematics}, 129(2):527--582, 2007.

\bibitem{CJK2}
S.~Choi, D.~Jerison, and I.~Kim.
\newblock Local regularization of the one-phase {Hele-Shaw} flow.
\newblock {\em Indiana University mathematics journal}, pages 2765--2804, 2009.

\bibitem{constantin2016muskat}
P.~Constantin, D.~C{\'o}rdoba, F.~Gancedo, L.~Rodr{\'\i}guez-Piazza, and R.~M.
  Strain.
\newblock On the {Muskat} problem: global in time results in 2d and 3d.
\newblock {\em American Journal of Mathematics}, 138(6):1455--1494, 2016.

\bibitem{constantin2013global}
P.~Constantin, D.~C{\'o}rdoba, F.~Gancedo, and R.~M. Strain.
\newblock On the global existence for the {Muskat} problem.
\newblock {\em Journal of the European Mathematical Society (EMS Publishing)},
  15(1), 2013.

\bibitem{constantin1993global}
P.~Constantin and M.~Pugh.
\newblock Global solutions for small data to the {Hele-Shaw} problem.
\newblock {\em Nonlinearity}, 6(3):393--415, 1993.

\bibitem{cordoba2011interface}
A.~C{\'o}rdoba, D.~C{\'o}rdoba, and F.~Gancedo.
\newblock Interface evolution: the {Hele-Shaw} and {Muskat} problems.
\newblock {\em Annals of mathematics}, pages 477--542, 2011.

\bibitem{CKY}
K.~Craig, I.~Kim, and Y.~Yao.
\newblock Congested aggregation via newtonian interaction.
\newblock {\em Archive for Rational Mechanics and Analysis}, 227(1):1--67,
  2018.

\bibitem{dah}
B.~Dahlberg.
\newblock Harmonic functions in lipschitz domains.
\newblock {\em Harmonic analysis in Euclidean spaces}, pages 313--322, 1979.

\bibitem{david2021free}
N.~David and B.~Perthame.
\newblock Free boundary limit of a tumor growth model with nutrient.
\newblock {\em Journal de Math{\'e}matiques Pures et Appliqu{\'e}es},
  155:62--82, 2021.

\bibitem{David_S}
N.~David and M.~Schmidtchen.
\newblock On the incompressible limit for a tumour growth model incorporating
  convective effects.
\newblock {\em Communications on Pure and Applied Mathematics},
  77(5):2613--2650, 2024.

\bibitem{de2017paradifferential}
T.~De~Poyferr{\'e} and Q.-H. Nguyen.
\newblock A paradifferential reduction for the gravity-capillary waves system
  at low regularity and applications.
\newblock {\em Bulletin de la Soci{\'e}t{\'e} Math{\'e}matique de France},
  145(4):643--710, 2017.

\bibitem{dong21}
H.~Dong, F.~Gancedo, and H.~Q. Nguyen.
\newblock Global well-posedness for the one-phase {Muskat} problem.
\newblock {\em Communications on Pure and Applied Mathematics},
  76(12):3912--3967, 2023.

\bibitem{dong23}
H.~Dong, F.~Gancedo, and H.~Q. Nguyen.
\newblock Global well-posedness for the one-phase {Muskat} problem in 3d.
\newblock {\em arXiv preprint arXiv:2308.14230}, 2023.

\bibitem{EJ}
C.~M. Elliott and V.~Janovsk{\`y}.
\newblock A variational inequality approach to {{Hele-Shaw}} flow with a moving
  boundary.
\newblock {\em Proceedings of the Royal Society of Edinburgh Section A:
  Mathematics}, 88(1-2):93--107, 1981.

\bibitem{escher1997classical}
J.~Escher and G.~Simonett.
\newblock Classical solutions of multidimensional {Hele--Shaw} models.
\newblock {\em SIAM Journal on Mathematical Analysis}, 28(5):1028--1047, 1997.

\bibitem{evans}
L.~C. Evans.
\newblock {\em Partial Differential Equations}, volume~19.
\newblock American Mathematical Soc., 2010.

\bibitem{figalli20}
A.~Figalli, X.~Ros-Oton, and J.~Serra.
\newblock Generic regularity of free boundaries for the obstacle problem.
\newblock {\em Publications math{\'e}matiques de l'IH{\'E}S}, 132(1):181--292,
  2020.

\bibitem{gancedo2019muskat}
F.~Gancedo, E.~Garcia-Juarez, N.~Patel, and R.~M. Strain.
\newblock On the {Muskat} problem with viscosity jump: global in time results.
\newblock {\em Advances in Mathematics}, 345:552--597, 2019.

\bibitem{jacobs2022tumor}
M.~Jacobs, I.~Kim, and J.~Tong.
\newblock Tumor growth with nutrients: regularity and stability.
\newblock {\em Communications of the American Mathematical Society},
  3(04):166--208, 2023.

\bibitem{JK1}
D.~Jerison and I.~Kim.
\newblock The one-phase {{Hele-Shaw}} problem with singularities.
\newblock {\em The Journal of Geometric Analysis}, 15(4):641--667, 2005.

\bibitem{kim2003}
I.~Kim.
\newblock Uniqueness and existence results on the {{Hele-Shaw}} and the stefan
  problems.
\newblock {\em Archive for Rational Mechanics \& Analysis}, 168(4), 2003.

\bibitem{kim3}
I.~Kim.
\newblock Long time regularity of solutions of the {Hele--Shaw} problem.
\newblock {\em Nonlinear Analysis: Theory, Methods \& Applications},
  64(12):2817--2831, 2006.

\bibitem{kim2}
I.~Kim.
\newblock Regularity of the free boundary for the one phase {Hele--Shaw}
  problem.
\newblock {\em Journal of Differential Equations}, 223(1):161--184, 2006.

\bibitem{contact}
I.~Kim.
\newblock Homogenization of a model problem on contact angle dynamics.
\newblock {\em Communications in Partial Differential Equations},
  33(7):1235--1271, 2008.

\bibitem{KimMellet2009}
I.~Kim and A.~Mellet.
\newblock Homogenization of a {H}ele-{S}haw problem in periodic and random
  media.
\newblock {\em Arch. Ration. Mech. Anal.}, 194(2):507--530, 2009.

\bibitem{kim2023incompressible}
I.~Kim and A.~Mellet.
\newblock Incompressible limit of a porous media equation with bistable and
  monostable reaction term.
\newblock {\em SIAM Journal on Mathematical Analysis}, 55(5):5318--5344, 2023.

\bibitem{kimzhang2024}
I.~Kim and Y.~P. Zhang.
\newblock Regularity of {{Hele-Shaw}} flow with source and drift.
\newblock {\em Annals of PDE}, 10(2):20, 2024.

\bibitem{persis}
J.~King, A.~Lacey, and J.~Vazquez.
\newblock Persistence of corners in free boundaries in {{Hele-Shaw}} flow.
\newblock {\em European Journal of Applied Mathematics}, 6(05):455--490, 1995.

\bibitem{maury2010}
B.~Maury, A.~Roudneff-Chupin, and F.~Santambrogio.
\newblock A macroscopic crowd motion model of gradient flow type.
\newblock {\em Mathematical Models and Methods in Applied Sciences},
  20(10):1787--1821, 2010.

\bibitem{muskat}
M.~{Muskat}.
\newblock Two fluid systems in porous media. the encroachment of water into an
  oil sand.
\newblock {\em Physics}, 5(9):250--264, 1934.

\bibitem{nguyen2022global}
H.~Q. Nguyen.
\newblock Global solutions for the {Muskat} problem in the scaling invariant
  {Besov space $\dot B^1_{\infty, 1}$}.
\newblock {\em Advances in Mathematics}, 394:108122, 2022.

\bibitem{np20}
H.~Q. Nguyen and B.~Pausader.
\newblock A paradifferential approach for well-posedness of the {Muskat}
  problem.
\newblock {\em Archive for Rational Mechanics and Analysis}, 237(1):35--100,
  2020.

\bibitem{PQV}
B.~Perthame, F.~Quir{\'o}s, and J.~L. V{\'a}zquez.
\newblock The {Hele--Shaw} asymptotics for mechanical models of tumor growth.
\newblock {\em Archive for Rational Mechanics and Analysis}, 212(1):93--127,
  2014.

\bibitem{povzar2015homogenization}
N.~Po{\v{z}}{\'a}r.
\newblock Homogenization of the {{Hele-Shaw}} problem in periodic
  spatiotemporal media.
\newblock {\em Archive for Rational Mechanics and Analysis}, 217(1):155--230,
  2015.

\bibitem{richardson1972hele}
S.~Richardson.
\newblock Hele shaw flows with a free boundary produced by the injection of
  fluid into a narrow channel.
\newblock {\em Journal of Fluid Mechanics}, 56(4):609--618, 1972.

\bibitem{schwab2024well}
R.~Schwab, S.~Tu, and O.~Turanova.
\newblock Well-posedness for viscosity solutions of the one-phase {Muskat}
  problem in all dimensions.
\newblock {\em arXiv preprint arXiv:2404.10972}, 2024.

\bibitem{HS1898}
H.~S.~H. Shaw.
\newblock Investigation of the nature of surface resistance of water and of
  stream-line motion under certain experimental conditions.
\newblock In {\em Inst. NA.}, 1898.

\bibitem{SulakTuranova}
A.~Sulak and O.~Turanova.
\newblock The incompressible limit of an inhomogeneous model of tissue growth.
\newblock {\em arXiv:2503.19849}, 2025.

\bibitem{tong2025convergence}
J.~Tong and Y.~P. Zhang.
\newblock Convergence of free boundaries in the incompressible limit of tumor
  growth models.
\newblock {\em Journal de Math{\'e}matiques Pures et Appliqu{\'e}es},
  203:103752, 2025.

\bibitem{turanova2025hele}
O.~Turanova and Y.~P. Zhang.
\newblock A {{Hele-Shaw}} problem with interior and free boundary oscillation:
  well-posedness and homogenization.
\newblock {\em arXiv preprint arXiv:2508.13441}, 2025.

\end{thebibliography}




\end{document}